\DeclareMathOperator{\ad}{ad}
\DeclareMathOperator{\Aut}{Aut}
\DeclareMathOperator{\Der}{Der}
\DeclareMathOperator{\diag}{diag}
\DeclareMathOperator{\Dih}{Dih}
\DeclareMathOperator{\GL}{GL}
\DeclareMathOperator{\I}{I}
\DeclareMathOperator{\id}{id}
\DeclareMathOperator{\im}{im}
\DeclareMathOperator{\OO}{O}
\DeclareMathOperator{\rank}{rank}
\DeclareMathOperator{\Ric}{Ric}
\DeclareMathOperator{\Sym}{Sym}
\DeclareMathOperator{\T}{T}
\newcommand{\bb}{\mathbb}
\newcommand{\mf}{\mathfrak}
\theoremstyle{plain}
\newtheorem{theorem}{Theorem}[section]
\newtheorem{proposition}[theorem]{Proposition}
\newtheorem{corollary}[theorem]{Corollary}
\theoremstyle{definition}
\newtheorem{example}[theorem]{Example}
\theoremstyle{remark}
\newtheorem{remark}[theorem]{Remark}
\numberwithin{equation}{section}
\begin{document}

\title[The moduli space of left-invariant metrics on CSLAs]{The moduli
  space of left-invariant metrics on six-dimensional characteristically
  solvable nilmanifolds}

\author{Isolda Cardoso}
\address{Universidad Nacional de Rosario, ECEN-FCEIA,
  Departamento de Ma\-te\-má\-ti\-ca. Av. Pellegrini 250, 2000
  Rosario, Argentina.}
\email{\href{mailto:isolda@fceia.unr.edu.ar}{isolda@fceia.unr.edu.ar}}

\author{Ana Cosgaya}
\address{CONICET and Universidad Nacional de Rosario, EFB-FCEIA,
  Departamento de Ma\-te\-má\-ti\-ca. Av. Pellegrini 250, 2000
  Rosario, Argentina.}
\email{\href{mailto:acosgaya@fceia.unr.edu.ar}{acosgaya@fceia.unr.edu.ar}}

\author{Silvio Reggiani}
\address{CONICET and Universidad Nacional de Rosario, ECEN-FCEIA,
  Departamento de Ma\-te\-má\-ti\-ca. Av. Pellegrini 250, 2000
  Rosario, Argentina.}
\email{\href{mailto:reggiani@fceia.unr.edu.ar}{reggiani@fceia.unr.edu.ar}}
\urladdr{\url{http://www.fceia.unr.edu.ar/~reggiani}}

\date{\today}

\thanks{Supported by CONICET and UNR. Partially supported by SeCyT-UNR and ANPCyT}

\keywords{Characteristically solvable Lie algebra, Nilmanifold, Left-invariant metric, Index of symmetry, Distribution of symmetry}

\subjclass[2020]{53C30, 22E25}

\begin{abstract}
  A real Lie algebra is said to be characteristically solvable if its derivation algebra is solvable. We explicitly determine the moduli space of left-invariant metrics, up to isometric automorphism, for $6$-dimensional nilmanifolds whose associated Lie algebra is characteristically solvable of triangular type. We also compute the corresponding full isometry groups. For each left-invariant metric on these nilmanifolds we compute the index and distribution of symmetry. In particular, we find the first known examples of Lie groups which do not admit a left-invariant metric with positive index of symmetry. As an application we study the index of symmetry of nilsoliton metrics. We prove that nilsoliton metrics detect the existence of left-invariant metrics with positive index of symmetry.
\end{abstract}

\maketitle

\section{Introduction}\label{Sec:Introduction}  

Let $H$ be a Lie group with Lie algebra $\mf h$. A very natural problem to study is: how many essentially different left-invariant geometries does $H$ admit? More precisely, one wants to determine the moduli space $\mathcal M(H) / {\sim}$ of left-invariant metrics on $H$ up to isometric isomorphism and understand its topological structure. Solving this problem is extremely hard and the answer is unknown in general, even for Lie groups of low dimension. Some partial results are present in the literature. For instance in \cite{lauretDegenerationsLieAlgebras2003} and \cite{kodamaSpaceLeftinvariantMetrics2011}, the Lie groups $H$ with low dimensional $\mathcal M(H) / {\sim}$ are classified. In \cite{haLeftInvariantMetrics2009}, $\mathcal{M}(H) / {\sim}$ is completely determined when $\dim H = 3$. In the nilpotent case we can mention the work \cite{discalaInvariantMetricsIwasawa2013} where the problem was solved for the Iwasawa manifold, and more recently in \cite{reggianiModuliSpaceLeftinvariant2020} for some $6$-dimensional nilpotent Lie groups with first Betti number equal to $4$ and \cite{figulaIsometryClassesSimply2018,ficzereIsometryGroupsSixdimensional2023} for a certain nilpotent $6$-dimensional Lie algebras generalizing the filiform family. In \cite{consoleModuliSpaceSixDimensional2005} a description of the moduli space of nilpotent metric Lie algebras in dimension $\le 6$ is given. This last result is closely related with the problem mentioned above, but the approach is different since we look for a precise description of $\mathcal M(H)/{\sim}$ for a fixed isomorphism class. 

When a description of the moduli space of left-invariant metrics on $H$ is available, we can study how different (invariant) geometric objects vary along different left-invariant geometries. Some examples of such objects are Hermitian structures (in even dimension), the signature of the Ricci operator or the so-called index of symmetry. This last object is of particular interest to us. Let us briefly say that the index of symmetry $i_{\mf s}(M)$ of a homogeneous Riemannian manifold $M$ is a geometric invariant that measures how far $M$ is from being a symmetric space. There is a strong structural theory regarding the index (or the co-index $\dim M - i_{\mf s}(M)$) of symmetry for compact homogeneous spaces, and there is also a classification of compact homogeneous spaces with co-index of symmetry $3$. We refer to \cite{olmosIndexSymmetryCompact2014,berndtCompactHomogeneousRiemannian2017,reggianiManifoldsAdmittingMetric2021} for more details on this topic. Despite some progress made for left-invariant metrics on $3$-dimensional solvable Lie groups and naturally reductive nilpotent Lie groups (see \cite{reggianiDistributionSymmetryNaturally2019,mayIndexSymmetryLeftinvariant2021,cosgayaIsometryGroupsThreedimensional2022}), a general theory for the index of symmetry in the non-compact case is virtually unknown. Moreover, even the classificatory results for low co-index in the non-compact case are sparse. 

In this paper we deal with characteristically solvable Lie algebras of triangular type, CSLAT for short, namely nilpotent Lie algebras whose derivation algebra is isomorphic to a subalgebra of the triangular Lie algebra. In dimension~$6$, the family of CSLATs has 20 of the 34 isomorphism classes of (real) nilpotent Lie algebras. Our main goal is to classify the left-invariant metrics up to isometric automorphism in every CSLAT of dimension $6$ as well as computing the index of symmetry of every metric in this classification. 

In order to determine the moduli space $\mathcal M(H) / {\sim}$ when $H$ is $6$-dimensional with $\mf h$ CSLAT, we compute in Section \ref{sec:full-aut-group} the full automorphism group $\Aut(\mf h)$. This includes an explicit description of $D = \Aut(\mf h) / {\Aut_0(\mf h)}$ which can be realized as a finite subgroup of $\Aut(\mf h)$ intersecting every connected component exactly once. Moreover, it follows from Theorem~\ref{sec:full-autom-groups-1} that $D$ is always a $2$-group and $\Aut(\mf h)$ is isomorphic to a subgroup of the lower triangular group $\T_6$ if and only if $D$ is abelian. We must notice that there is some previous work dealing with the computation of the automorphism group of nilpotent Lie algebras, see for instance \cite{magninComplexStructuresIndecomposable2007}. However, as far as we know,  in the existing literature there is not an explicit computation of the automorphism group for all of the CSLAT. We present here an algorithmic  procedure to compute $\Aut(\mf h)$ with a key simplification process. From this we obtain a nice presentation of the automorphism group with respect to a suitable basis of $\mf h$, which plays an important role in determining the moduli space of left invariant-metrics. 

In Section \ref{sec:moduli-space-left-1} we address the problem of determining the moduli space of left-invariant metrics on $H$. We start by describing the moduli space $\mathcal M(H)/ {\sim_0}$ of left-invariant metrics on $H$ modulo an isometric automorphism in the connected component of $\Aut(\mf h)$. We prove that $\mathcal M(H) / {\sim_0}$ is a smooth manifold diffeomorphic to a certain embedded submanifold $\Sigma$ of $\T_6^+$ (the subgroup of $\T_6$ with positive entries on the diagonal). Then we explain how to obtain $\mathcal M(H) / {\sim}$ from $\mathcal M(H) / {\sim_0}$ as a finite quotient. It turns out that in most cases, $\mathcal M(H) / {\sim}$ is homeomorphic to the quotient of $\Sigma$ modulo a finite group acting as reflections on $\T_6^+$, which leaves $\Sigma$ invariant.   

In Section \ref{sec:full-isometry-group-1} we compute the full isometry group $\I(H, g_\sigma)$, where $g_\sigma$ is a left-invariant metric representing $\sigma \in \Sigma$. It is known from \cite{wolfLocallySymmetricSpaces1963} that $\I(H, g_\sigma) \simeq H \rtimes K$, where $K = \Aut(\mf h) \cap \OO(g_\sigma)$. We prove that if $D$ is abelian, then $K \subset D$. In particular, when $D$ is abelian there is a finite number of subgroups of $\Aut(\mf h)$ that can serve as the full isotropy subgroup of $g_\sigma$, for all $\sigma \in \Sigma$. This does not hold if $D$ is not abelian. Indeed, we provide examples of $1$-parameter families of subgroups $K_r \subset \Aut(\mf h)$ and $\sigma_r \in \Sigma$ such that $K_r$ is the full isotropy group of $g_{\sigma_r}$. The explicit determination of the full isotropy groups $K$ for any left-invariant metric is also possible and it is treated in that section.   

In Section \ref{sec:index-symmetry-csla-title} we compute the index of symmetry of left-invariant metrics in the context of CSLATs of dimension $6$. We remark that for every Lie algebra isomorphism class, except for five distinguished cases, every left-invariant metric on $H$ has trivial index of symmetry. This provides the first known examples of Lie groups which do not admit a left-invariant metric with positive index of symmetry. On the other side, among the CSLATs whose associated Lie groups do admit metrics with non-trivial index of symmetry we can find new examples of non-compact homogeneous spaces with co-index of symmetry $3$ and $4$. Lastly, we find some examples of left-invariant metrics on nilpotent Lie groups whose distribution of symmetry (i.e.\ the left-invariant distribution induced by the Killing field parallel at the identity element) is not contained in the center of the Lie algebra. It was not clear that nilpotent Lie groups with that property actually exist.

As an application of our results, in Section \ref{sec:application-to-nilsolitons} we study the index and distribution of symmetry of nilsoliton metrics in CSLATs. Recall that a left-invariant metric on $H$ is called a \emph{nilsoliton} metric if its Ricci operator satisfies $\Ric = c \id_{\mf h} + D$ for some $c \in \nobreak \bb R$ and $D \in \Der(\mf h)$. These metrics are particular cases of \emph{solvsoliton} metrics (same definition but for solvable Lie groups), a family that exhausts the homogeneous expanding Ricci soliton metrics (see \cite{bohmNoncompactEinsteinManifolds2023}). Solvsoliton metrics are proved to be unique (if they exist) up to scaling. Moreover, nilsoliton metrics are given by the nilradicals of Einstein solvmanifolds (see \cite{lauretRicciSolitonSolvmanifolds2011}). A classification of nilsoliton metrics in dimension $6$ can be found in \cite{willRankoneEinsteinSolvmanifolds2003}. Since nilsoliton metrics are Ricci soliton metrics, they are nice enough so that they are not improved under the Ricci flow (that is, evolving only by scaling or pulling-back by a diffeomorphism). Thus it is expected that nilsoliton metrics have low co-index of symmetry whenever the underlying Lie algebra structure do not obstruct the existence of positive index of symmetry. We prove that this is indeed the case. More precisely, if $H$ admits metrics with positive index of symmetry, then the nilsoliton metric has positive index of symmetry. It is interesting to notice that the index of symmetry of nilsoliton metrics is not always maximal among all the left-invariant metrics. However, the distribution of symmetry of nilsoliton metrics is well behaved, in the sense that it is contained in the center of the Lie algebra when this is not obstructed by the isomorphism class of $\mf h$.

Finally, in Appendix \ref{sec:appendix} we include some tables summarizing the classificatory results obtained in the paper. This results often rely in heavy computations that we performed with the computer software SageMath. The source code of such computation is freely available and provided in the separate GitHub repository \cite{cardosoAuxiliarySageMathNotebooks2024}. We believe that some of this code could be of use in studying other geometric problems in nilpotent Lie groups of low dimension. 

\section{Characteristically solvable Lie algebras}

\par In this article we mainly deal with $6$-dimensional real
nilpotent Lie algebras. This is the greatest dimension with a finite
number of isomorphism classes: there are $34$ isomorphism classes of
nilpotent real Lie algebras. Observe that every $6$-dimensional
nilpotent Lie algebra admits a basis $e_1, \ldots, e_6$ for which the
structure coefficients are $-1$, $0$ or $1$. So, it is convenient to
denote a Lie algebra with a tuple summarizing its structure. We will
follow the notation used in
\cite{salamonComplexStructuresNilpotent2001}. For example, the
notation $\mf h_{11} = (0, 0, 0, 12, 13, 14 + 23)$ means that
$\mf h_{11}$ is the nilpotent Lie algebra $(\bb R^6, [\cdot, \cdot])$
where the Lie bracket in the canonical basis is given by the
non-trivial relations
\begin{align*}
  [e_1, e_2] = -e_3 && [e_1, e_3] = -e_4 && [e_1, e_4] = [e_2, e_3] = -e_6.
\end{align*}

\par In Table \ref{tab:classif-nilp-dim-6} we list all the $34$ isomorphism classes of $6$-dimensional nilpotent Lie algebras. Note that some of the structure coefficients are indicated with an asterisk. This means that further on the paper we shall consider a different basis (given in the last column) to better serve our purposes. We also mention that the third column on the table shows the nilpotency step and the fourth column indicates the dimension of the commuting ideals for the decomposable Lie algebras.

\begin{table}[ht] 
  \caption{Nilpotent and CSLAT Lie algebras of dimension $6$}
  \label{tab:classif-nilp-dim-6}
\centering
    \(
  \begin{array}{|c|l|c|c|c|l|}
    \hline
    \text{Name}  & \text{Structure coefficients} & \text{Step} & \oplus & \text{CSLAT} & \text{Standard basis} \\ \hline \hline
    \mf h_1      & (0,0,0,0,0,0)                 & 1           & 1 + \cdots + 1 & & \\ \hline
    \mf h_2      & (0,0,0,0,12,34)               & 2           & 3 + 3 & & \\ \hline
    \mf h_3      & (0,0,0,0,0,12+34)             & 2           & 1 + 5 & & \\ \hline
    \mf h_4      & (0,0,0,0,12,14+23)            & 2           & & & \\ \hline
    \mf h_5      & (0,0,0,0,13+42,14+23)         & 2           & & & \\ \hline
    \mf h_6      & (0,0,0,0,12,13)               & 2           & 1 + 5 & & \\ \hline
    \mf h_7      & (0,0,0,12,13,23)              & 2           & & & \\ \hline
    \mf h_8      & (0,0,0,0,0,12)                & 2           & 1 + 1 + 1 + 3 & & \\ \hline
    \mf h_9      & (0,0,0,0,12,14+25)^*          & 3           & 1 + 5 & \checkmark & (0,0,0,0,12,51+23)\\ \hline
    \mf h_{10}   & (0,0,0,12,13,14)              & 3           & & \checkmark & (0,0,0,12,13,14) \\ \hline
    \mf h_{11}   & (0,0,0,12,13,14+23)           & 3           & & \checkmark & (0,0,0,12,13,14+23) \\ \hline
    \mf h_{12}   & (0,0,0,12,13,24)              & 3           & & \checkmark & (0,0,0,12,13,24) \\ \hline
    \mf h_{13}   & (0,0,0,12,13+14,24)           & 3           & & \checkmark & (0,0,0,12,13+14,24) \\ \hline
    \mf h_{14}   & (0,0,0,12,14,13+42)           & 3           & & \checkmark & (0,0,0,12,14,13+42) \\ \hline
    \mf h_{15}   & (0,0,0,12,13+42,14+23)         & 3           & & & \\ \hline
    \mf h_{16}   & (0,0,0,12,14,24)              & 3           & 1 + 5 & & \\ \hline
    \mf h_{17}   & (0,0,0,0,12,15)               & 3           & 1 + 1 + 4 & & \\ \hline
    \mf h_{18}   & (0,0,0,12,13,14+35)^*         & 3           & & \checkmark & (0,0,0,12,13,15+24) \\ \hline    
    \parbox[m][1.2pc][c]{1.1pc}{$\mf h_{19}^-$} & (0,0,0,12,23,14-35)           & 3           & & & \\ \hline  
    \parbox[m][1.2pc][c]{1.1pc}{$\mf h_{19}^+$} & (0,0,0,12,23,14+35)^*         & 3           & & \checkmark & (0,0,0,23,21,14+35) \\ \hline  
    \mf h_{20}   & (0,0,0,0,12,15+34)            & 3           & & & \\ \hline     
    \mf h_{21}   & (0,0,0,12,14,15)              & 4           & 1 + 5 & \checkmark & (0,0,0,12,14,15) \\ \hline       
    \mf h_{22}   & (0,0,0,12,14,15+24)           & 4           & 1 + 5 & \checkmark & (0,0,0,12,14,15+24) \\ \hline    
    \mf h_{23}   & (0,0,12,13,23,14)             & 4           & & \checkmark & (0,0,12,13,23,14) \\ \hline      
    \mf h_{24}   & (0,0,0,12,14,15+23+24)        & 4           & & \checkmark & (0,0,0,12,14,15+23+24) \\ \hline 
    \mf h_{25}   & (0,0,0,12,14,15+23)           & 4           & & \checkmark & (0,0,0,12,14,15+23) \\ \hline   
    \parbox[m][1.2pc][c]{1.1pc}{$\mf h_{26}^-$} & (0,0,12,13,23,14-25)^*        & 4           & & \checkmark & (0,0,12,31,32,15+24) \\ \hline
    \parbox[m][1.2pc][c]{1.1pc}{$\mf h_{26}^+$} & (0,0,12,13,23,14+25)          & 4           & & & \\ \hline
    \mf h_{27}   & (0,0,0,12,14-23,15+34)        & 4           & & \checkmark & (0,0,0,12,14-23,15+34) \\ \hline
    \mf h_{28}   & (0,0,12,13,14,15)             & 5           & & \checkmark & (0,0,12,13,14,15) \\ \hline     
    \mf h_{29}   & (0,0,12,13,14,23+15)          & 5           & & \checkmark & (0,0,12,13,14,23+15) \\ \hline  
    \mf h_{30}   & (0,0,12,13,14+23,24+15)       & 5           & & \checkmark & (0,0,12,13,14+23,24+15) \\ \hline
    \mf h_{31}   & (0,0,12,13,14,34+52)          & 5           & & \checkmark & (0,0,12,13,14,34+52) \\ \hline  
    \mf h_{32}   & (0,0,12,13,14+23,34+52)       & 5           & & \checkmark & (0,0,12,13,14+23,34+52) \\ \hline                                                          
  \end{array}
  \)
\end{table}

\par Let $\mf h$ be a nilpotent Lie algebra. We say that $\mf h$ is
\emph{characteristically solvable, CSLA} for short, if its derivation
algebra $\Der(\mf h)$ is a solvable Lie algebra. Notice that our definition is slightly different from the definition given in \cite{togoDerivationAlgebrasLie1961}. In addition, we say that a CSLA $\mf h$ is of \emph{triangular type}, denoted by \emph{CSLAT}, if $\operatorname{spec}(D) \subset \mathbb R$ for every $D \in \Der(\mf h)$. The next result
gives the classification of CSLAs in dimension $6$. Observe that most CSLAs turn out to be of triangular type.

\begin{proposition}\label{sec:char-solv-lie}
  \it Let $\mf h$ be a real nilpotent Lie algebra of dimension
  $6$. Then $\mf h$ is a CSLA if and only if $\mf h$ is isomorphic to
  one of the following Lie algebras:
  \begin{align*}
    \mathfrak{h}_9, \mathfrak{h}_{10}, \mathfrak{h}_{11}, \mathfrak{h}_{12}, \mathfrak{h}_{13}, \mathfrak{h}_{14}, \mathfrak h_{15}, \mathfrak{h}_{18}, \mathfrak{h}_{19}^-, \mathfrak{h}_{19}^+, \mathfrak{h}_{21}, \mathfrak{h}_{22}, \mathfrak{h}_{23}, \mathfrak{h}_{24}, \mathfrak{h}_{25}, \mathfrak{h}_{26}^-, \mathfrak{h}_{26}^+, \mathfrak{h}_{27}, \mathfrak{h}_{28}, \mathfrak{h}_{29}, \mathfrak{h}_{30}, \mathfrak{h}_{31}, \mathfrak{h}_{32}. 
  \end{align*} 
  Moreover, each of the above CSLAs is a CSLAT except for $\mathfrak h_{15}$, $\mathfrak h_{19}^-$ and $\mathfrak h_{26}^+$.
\end{proposition}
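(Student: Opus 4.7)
The plan is to proceed case by case through the $34$ isomorphism classes of $6$-dimensional real nilpotent Lie algebras listed in Table~\ref{tab:classif-nilp-dim-6}, determining for each whether $\Der(\mf h)$ is solvable and, if so, whether every derivation has real spectrum.

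First, for each Lie algebra presented in structure-constant form, I would compute $\Der(\mf h)$ explicitly by solving the linear system $D[e_i, e_j] = [De_i, e_j] + [e_i, De_j]$ for a generic $D = (D_{ij}) \in \mathfrak{gl}_6(\bb R)$. This is routine linear algebra and produces an explicit basis of $\Der(\mf h)$ as a matrix subalgebra of $\mathfrak{gl}_6(\bb R)$, together with its dimension.

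Next, to decide solvability I would use the fact that inner derivations of a nilpotent Lie algebra are themselves nilpotent, so $\ad(\mf h)$ lies in the nilradical of $\Der(\mf h)$; hence non-solvability amounts to the presence of a non-trivial Levi factor. For the $11$ non-CSLA classes I would exhibit an explicit semisimple subalgebra of derivations, typically a copy of $\mathfrak{sl}_2(\bb R)$ or $\mathfrak{sl}_3(\bb R)$ realized as rescaling or rotational symmetries (for example, the natural $\mathfrak{sl}_2(\bb R)$-action on the Heisenberg-type $\mf h_7 = (0,0,0,12,13,23)$). For the $23$ classes claimed to be CSLA, I would certify solvability either by applying Cartan's criterion $\operatorname{tr}(DD') = 0$ on $[\Der(\mf h), \Der(\mf h)]$ or by checking directly that the derived series of $\Der(\mf h)$ terminates at zero.

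For the CSLAs, the triangular-type check reduces to the analysis of a maximal toral subalgebra $\mf t \subset \Der(\mf h)$, since nilpotent derivations have trivial spectrum: the CSLAT property is equivalent to $\mf t$ being $\bb R$-split. For the three exceptional cases $\mf h_{15}$, $\mf h_{19}^-$ and $\mf h_{26}^+$, I would write down a semisimple derivation whose eigenvalues on $\mf h$ include a genuine complex-conjugate pair, produced by the ``rotational'' symmetry visible in the structure constants and contrasted with the split symmetry of their partners $\mf h_{19}^+$ and $\mf h_{26}^-$. For the remaining $20$ CSLAs I would confirm that $\mf t$ can be chosen to act diagonally in a suitable real basis. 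The main obstacle is purely the volume of bookkeeping across $34$ cases: each individual verification is routine but the overall accounting is error-prone, so a computer algebra system is essentially indispensable, consistent with the authors' use of SageMath. The conceptual payoff is concentrated in the contrast between the $\pm$-pairs, which illustrates precisely how the CSLAT refinement sits inside the CSLA condition.
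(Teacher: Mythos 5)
Your proposal is correct and follows essentially the same route as the paper: a case-by-case, computer-assisted computation of $\Der(\mf h)$ for all $34$ classes, exhibiting an $\mf{sl}_2(\bb R)$ subalgebra of derivations for the non-CSLAs, and detecting the failure of the triangular-type condition for $\mf h_{15}$, $\mf h_{19}^-$, $\mf h_{26}^+$ via a semisimple (in the paper, skew-symmetric) derivation with non-real eigenvalues. The only cosmetic difference is that the paper certifies both solvability and real spectrum for the $20$ CSLATs in one stroke, by exhibiting a basis that simultaneously triangularizes $\Der(\mf h)$ over $\bb R$, whereas you separate the two checks (Cartan's criterion, then a maximal split torus); both are valid.
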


\begin{proof}
  \par By a direct computation, we can see that the derivation Lie
  algebras for $\mf h_{10}, \ldots, \mf h_{14}$,
  $\mf h_{21}, \ldots, \mf h_{25}$, $\mf h_{27}, \ldots, \mf h_{32}$
  are represented by triangular matrices in the standard basis.
  Consider the following changes of basis,
  \begin{align*}
    e_1' & = e_2, & e_2' & = e_1, & e_3' & = e_4, & e_4' & = e_3, & e_5' & = e_5, & e_6' & = e_6 \\
    e_1' & = e_1, & e_2' & = e_3, & e_3' & = e_2, & e_4' & = e_5, & e_5' & = e_4, & e_6' & = e_6 
  \end{align*}
  for $\mf h_9$ and $\mf h_{18}$, respectively,
  \begin{align*}
    e_1' & = e_1 + e_3, & e_2' & = e_2,       & e_3' & = e_1 - e_3, &
    e_4' & = e_4 + e_5, & e_5' & = e_4 - e_5, & e_6' & = 2e_6 
  \end{align*}
  for $\mf h_{19}^+$ and
  \begin{align*}
    e_1' & = e_1 + e_2,    & e_2' & = e_1 - e_2,    & e_3' & = -2e_3, &
    e_4' & = 2(e_4 + e_5), & e_5' & = 2(e_4 - e_5), & e_6' & = 4e_6 
  \end{align*}
  for $\mf h_{26}^-$. These changes of basis simultaneously
  triangularize all the derivations. The Lie algebras $\mf h_{15}$, $\mf h_{19}^-$ and $\mf h_{26}^+$ are characteristically solvable since the commutator of its derivation algebra is nilpotent. However, these algebras admit non trivial skew-symmetric derivations (with respect to the inner product induced
  by the standard basis). Thus, these Lie algebras cannot be of triangular type. Finally, in order to see that the remaining nilpotent Lie algebras of dimension $6$ are not characteristically solvable, one can check that its derivation algebra has a Lie subalgebra isomorphic to $\mf{sl}_2(\mathbb{R})$. The detailed computations are long and
  tedious to be made by hand and were instead performed with the
  software SageMath. The corresponding Jupyter notebook can be found
  in \cite[Notebook 01]{cardosoAuxiliarySageMathNotebooks2024}, the GitHub repository supporting this article.
\end{proof}

\par In Table \ref{tab:classif-nilp-dim-6} we also list all the CSLATs. From now on we will consider as the standard basis the one with the structure coefficients given in the last column of this table (i.e., the standard basis is the same as in the second column except for $\mf h_9, \mf h_{18}, \mf h_{19}^+$ and $\mf h_{26}^-$ which is changed according to the proof of Proposition \ref{sec:char-solv-lie}).

\begin{corollary}
  \it Let $\mf h$ be an $s$-step nilpotent Lie algebra of dimension
  $6$. If $\mf h$ is a CSLA, then $s \ge 3$.
\end{corollary}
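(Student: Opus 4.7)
The plan is to read off the conclusion directly from the classification given in Proposition \ref{sec:char-solv-lie} together with Table \ref{tab:classif-nilp-dim-6}, arguing by contrapositive: assume $s \le 2$ and deduce that $\mf h$ is not a CSLA.

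First I would handle the abelian case $s = 1$ by hand. Then $\mf h \simeq \mf h_1 = \mathbb R^6$ with trivial bracket, so $\Der(\mf h) = \mathfrak{gl}_6(\mathbb R)$, whose derived subalgebra is $\mathfrak{sl}_6(\mathbb R)$; this is certainly not solvable. For the step-$2$ case I would invoke the classification: from Table \ref{tab:classif-nilp-dim-6} (third column), the $2$-step nilpotent Lie algebras of dimension $6$ are exactly $\mf h_2, \mf h_3, \mf h_4, \mf h_5, \mf h_6, \mf h_7, \mf h_8$, and Proposition \ref{sec:char-solv-lie} rules each of these out of the CSLA list. Hence no CSLA of dimension $6$ can have step $1$ or $2$, proving $s \ge 3$.

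In fact one could just cite Proposition \ref{sec:char-solv-lie} as a whole and observe that every algebra appearing in the CSLA enumeration $\mf h_9, \ldots, \mf h_{32}$ carries step $3$, $4$ or $5$ according to Table \ref{tab:classif-nilp-dim-6}, which is the same argument packaged differently. Either way, the real work has already been done in Proposition \ref{sec:char-solv-lie} (where an $\mathfrak{sl}_2(\mathbb R)$-subalgebra of $\Der(\mf h_i)$ is exhibited for each $i \in \{2,\dots,8\}$); the corollary itself is a one-line inspection of the classification, so there is no genuine obstacle.
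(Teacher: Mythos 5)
Your proposal is correct and matches the paper's (implicit) argument: the corollary is stated without proof precisely because it follows by inspecting the list in Proposition~\ref{sec:char-solv-lie} against the step column of Table~\ref{tab:classif-nilp-dim-6}, which is exactly what you do. Your explicit handling of the abelian case via $\Der(\mathbb R^6)=\mathfrak{gl}_6(\mathbb R)$ is a harmless elaboration of the same inspection.
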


\section{The full automorphism group of a CSLAT}
\label{sec:full-aut-group}

\par Let us fix a CSLAT $\mf h$ of dimension $6$. We denote by
$\T_6 \subset \GL_6(\bb R)$ the Lie subgroup of lower triangular
matrices and by $\mf t_6$ its Lie algebra. In order to compute the
full automorphism group $\Aut(\mf h)$ of $\mf h$ we proceed as
follows. According to the proof of Proposition
\ref{sec:char-solv-lie}, up to a suitable change of basis, we can
identify $\Der(\mf h)$ with a Lie subalgebra of $\mf t_6$. Then
$e^{\Der(\mf h)} = \Aut_0(\mf h)$ is the connected component of the
automorphism group. Moreover, if we denote by $\Der_{\mf d}(\mf h)$
the abelian subalgebra of (simultaneously) diagonalizable derivations,
then we see that $\Aut(\mf h)$ has at least $2^k$ connected
components, where $k = \dim \Der_{\mf d}(\mf h)$. We will prove in
Theorem \ref{sec:full-autom-groups-1} that this bound is almost always
attained.

\begin{remark}\label{sec:full-autom-groups-2}
  \par Let $\mf h$ be a CSLAT of dimension $6$. Since $\Aut(\mf h)$ is an algebraic group, it has finitely many connected components, hence there exists a finite subgroup of automorphisms $D$ such that $\Aut(\mf h) = \Aut_0(\mf h) \rtimes D$. Indeed, it follows from \cite[Theorem~3.1]{hochschildStructureLieGroups1965} that a maximal compact subgroup of $\Aut(\mf h)$ meets every connected component. 
  
  Recall that $D$ must be conjugated to a subgroup of $\OO(n)$, however this does not imply that $D$ consists of diagonal automorphisms in the standard basis.
\end{remark}

\begin{theorem}\label{sec:full-autom-groups-1}
  \it Let $\mf h$ be a CSLAT of dimension $6$ such that
  $\mf h \not\simeq \mf h_{13}$, $\mf h \not\simeq \mf h_{19}^+$ and
  $\mf h \not\simeq \mf h_{26}^-$. Then $\Aut(\mf h)$ is isomorphic to
  a subgroup of $\T_6$. Moreover,
  \begin{enumerate} 
  \item $\Aut(\mf h) / {\Aut_0(\mf h)} \simeq (\bb Z_2)^k$ where
    $k = \dim \Der_{\mf d}(\mf h)$.
  \item
    $\Aut(\mf h_{13}) / {\Aut_0(\mf h_{13})} \simeq \mathrm G_{16}^3 =
    (\mathbb Z_2 \oplus \mathbb Z_2) \rtimes \mathbb Z_4$.
  \item
    $\Aut(\mf h_{19}^+) / {\Aut_0(\mf h_{19}^+)} \simeq {\Dih_4}
    \times \bb Z_2$.
  \item $\Aut(\mf h_{26}^-) / {\Aut_0(\mf h_{26}^-)} \simeq \Dih_4$.
  \end{enumerate}
\end{theorem}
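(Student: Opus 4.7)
The plan is to analyze each of the 20 CSLAT isomorphism classes from Proposition~\ref{sec:char-solv-lie} individually, but to organize the arguments around a uniform reduction scheme. Fix a CSLAT $\mf h$ with its standard basis from Table~\ref{tab:classif-nilp-dim-6}, so that $\Der(\mf h) \subset \mf t_6$ and hence $\Aut_0(\mf h) = \exp(\Der(\mf h)) \subset \T_6^+$. As noted before the theorem, the component group has order at least $2^k$, where $k = \dim \Der_{\mf d}(\mf h)$: each diagonal derivation $\diag(\lambda_1, \ldots, \lambda_6)$ satisfies $\lambda_i + \lambda_j = \lambda_k$ on every nonzero bracket coefficient $c_{ij}^k$, and the corresponding mod-$2$ linear system (in unknowns $a_i$ with $\epsilon_i = (-1)^{a_i}$) cuts out a subgroup $E \subset (\bb Z_2)^6$ of diagonal sign-flip automorphisms $\diag(\epsilon_1, \ldots, \epsilon_6)$. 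A case-by-case verification shows that in all of our Lie algebras the mod-$2$ rank coincides with the $\bb R$-rank of the weight system, so $|E| = 2^k$.

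The core of the theorem is the reverse bound: showing that, in the non-exceptional cases, every component of $\Aut(\mf h)$ contains a representative in $E$, and in particular $\Aut(\mf h) \subset \T_6$. For an arbitrary $\varphi \in \Aut(\mf h)$, I would exploit two structural constraints. First, $\varphi$ must preserve the lower central series $\mf h \supset [\mf h, \mf h] \supset \cdots$; since for each CSLAT in the standard basis the terms of this series are spanned by trailing segments $\langle e_{s+1}, \ldots, e_6 \rangle$ of the basis, this already forces $\varphi$ to be block-lower-triangular. Second, $\Der_{\mf d}(\mf h)$ provides a weight decomposition of $\mf h$, and $\varphi$ either preserves the weight lines individually or permutes them compatibly with the bracket. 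For the 17 non-exceptional cases, one verifies that no nontrivial such permutation is bracket-compatible, so $\varphi$ is diagonal in weight coordinates; the off-diagonal entries within each coincident-weight block can be absorbed into $\Aut_0(\mf h)$ by right multiplication, leaving a representative in $E$.

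For $\mf h_{13}$, $\mf h_{19}^+$ and $\mf h_{26}^-$, exotic non-triangular automorphisms do exist and must be exhibited explicitly. In each case, the bracket relations in the standard basis display a manifest symmetry (a swap of a pair such as $\{e_1,e_2\}$ or $\{e_4,e_5\}$, adjusted on the higher basis vectors) that is an automorphism but cannot be triangular; for $\mf h_{13}$ this symmetry has order $4$, while for $\mf h_{19}^+$ and $\mf h_{26}^-$ it has order $2$. One then composes these outer automorphisms with the sign flips from $E$ and with $\Aut_0$ to obtain the presentations $\mathrm G_{16}^3 = (\bb Z_2 \oplus \bb Z_2) \rtimes \bb Z_4$, $\Dih_4 \times \bb Z_2$ and $\Dih_4$, verifying the group laws by direct composition of the representatives. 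The main obstacle throughout is ruling out other exotic automorphisms in the 17 generic cases and correctly identifying the group structure in the three exceptional ones; in practice this is handled by solving the defining polynomial equations of $\Aut(\mf h)$ symbolically in SageMath and quotienting by $\Aut_0(\mf h)$, while the conceptual arguments rely on the rigidity provided by the descending central series and the diagonal-derivation weight grading.
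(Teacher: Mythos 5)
Your overall architecture agrees with the paper's: a lower bound of $2^k$ coming from diagonal sign-flip automorphisms, a case-by-case triangularity argument for the seventeen generic algebras, and explicit non-triangular generators for $\mf h_{13}$, $\mf h_{19}^+$, $\mf h_{26}^-$ (your description of the exceptional symmetries and their orders is consistent with the actual generators, and the identification of the three component groups by composing representatives is exactly what the paper does). The difficulty is that the mechanism you propose for the triangularity step --- the only step the theorem is really about --- is different from the paper's and has genuine gaps as written. The paper reduces triangularity to finitely many conditions by noting that $e_1,\ldots,e_s$ generate $\mf h$ with $s\le 4$, so $\varphi\in\T_6$ iff $e^i(\varphi(e_j))=0$ for $1\le i<j\le s$, and then separates these generators by automorphism-invariant data attached to $\ad_X$ for $X=\sum\alpha_ie_i$: the value of $\rank(\ad_X)$, the nilpotency degree of $\ad_X$, and whether $\im(\ad_X)$ lies in $Z(\mf h)$ or in $C^2(\mf h)$. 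Your two ingredients do not reach that far. Preservation of the lower central series says nothing about the action on $\mf h/[\mf h,\mf h]$, i.e.\ precisely on the generators, which is where all the work is.

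The weight-space argument is the real gap. First, an arbitrary $\varphi\in\Aut(\mf h)$ need not normalize $\Der_{\mf d}(\mf h)$; to assume it permutes the weight spaces you must first invoke a conjugacy theorem for maximal tori to replace $\varphi$ by $\varphi\psi$ with $\psi\in\Aut_0(\mf h)$, and you neither state nor prove this. Second, weight spaces are not lines in general: when $k<6$ several basis vectors share a weight (e.g.\ $e_3,e_4$ in $\mf h_{24}$, where $k=1$ and both have weight $a_0^3$), $\varphi$ can a priori act on such a block by an arbitrary element of $\GL_2$, and your claim that the off-diagonal entries ``can be absorbed into $\Aut_0(\mf h)$ by right multiplication'' is false exactly there: the restriction of $\Aut_0(\mf h_{24})$ to $\langle e_3,e_4\rangle$ is scalar, so nothing non-diagonal can be absorbed. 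Indeed $\mf h_{13}$ shows this failure is not hypothetical --- its order-$4$ outer automorphism sends $e_3\mapsto -e_3+e_4$ inside the coincident-weight block $\langle e_3,e_4\rangle$ and is precisely not absorbable. Such entries must instead be killed (or forced into a specific form) by the bracket relations, which is the content of the theorem, not a formal consequence of the grading. Finally, the assertion that ``no nontrivial permutation of the weight lines is bracket-compatible'' in the seventeen generic cases is the crux, and you ultimately defer it, together with the absorption step, to a symbolic computation; the paper, by contrast, gives hand arguments for triangularity via the $\ad$-invariants above and uses the computer only for supporting verification. So the proposal has the right skeleton but does not supply a working argument for the decisive step.
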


\begin{proof}
  \par Let $\mf h$ be a CSLAT of dimension $6$. It follows from Remark
  \ref{sec:full-autom-groups-2} that there exists a finite subgroup
  $D$ of $\Aut(\mf h)$ which meets every connected component exactly
  once. So $\Aut(\mf h) / {\Aut_0(\mf h)} \simeq D$ and as we observed
  before, $D$ has a subgroup isomorphic to $(\bb Z_2)^k$, where
  $k = \dim \Der_{\mf d}(\mf h)$. We will prove $D = (\bb Z_2)^k$ for
  all $\mf h \neq \mf h_{13}, \mf h_{19}^+, \mf h_{26}^-$. Let $e_1, \ldots, e_6$
  be the standard basis of $\mf h$ (cfr.\
  Table~\ref{tab:classif-nilp-dim-6}). Let $s$ be the least natural number
  such that the Lie subalgebra generated by $e_1, \ldots, e_s$
  coincides with $\mf h$. One can easily see that $s = 2, 3$ or
  $4$. Let $\varphi \in \Aut(\mf h)$, then $\varphi$ is lower
  triangular in the standard basis if and only if
  $e^i(\varphi(e_j)) = 0$ for all $1 \le i < j \le s$, where
  $e^1, \ldots, e^6$ is the dual basis of the standard basis. We argue
  by cases according to the possible values of $s$. Let
  $X = \sum \alpha_i e_i$ be an arbitrary element of $\mf h$.

  \par \textbf{Case 1: $s = 2$.} In this case,
  $\mf h \in \{\mf h_{23}, \mf h_{26}^-, \mf h_{28}, \mf h_{29}, \mf
  h_{30}, \mf h_{31}, \mf h_{32}\}$. Then, for all $\mf h$ except
  $\mf h_{26}^-$ and $\mf h_{32}$ we have that
  $\rank(\ad_{e_1}) > \rank(\ad_{e_2})$ and if $\alpha_1 \neq 0$ then
  $\rank( \ad_X) \ge \rank(\ad_{e_1})$. So, in these cases, we have
  $e^1(\varphi(e_2)) = 0$ and $\varphi \in \T_6$. Moreover,
  $D \simeq (\bb Z_2)^k$ where $k = 1$ for
  $\mf h = \mf h_{29}, \mf h_{30}$ and $k = 2$ for
  $\mf h = \mf h_{23}, \mf h_{28}, \mf h_{31}$. In fact, it is easy to
  see that if $k = 1$, then $\varphi(e_2)$ is determined by
  $\varphi(e_1)$, hence $D$ is isomorphic to the subgroup generated by
  the automorphism $\varphi_1: e_1 \mapsto -e_1$. If $k = 2$ one can
  see that $D$ is isomorphic to the subgroup generated by the
  automorphisms $\varphi_1, \varphi_2$ given by
  \begin{align}\label{eq:1}
    \varphi_1(e_1) = -e_1 = -\varphi_2(e_1) && \text{and} && \varphi_1(e_2) = e_2 = -\varphi_2(e_2).
  \end{align}
  Let us look at the remaining cases.

  \par \textbf{Case 1.1: $\mf h = \mf h_{26}^-$.} In this case
  $\rank(\ad_{e_1}) = \rank(\ad_{e_2}) = 3$. Moreover, if
  $\rank(\ad_X) = 3$, then $\alpha_1 \neq 0$ or $\alpha_2 \neq
  0$. Since $(\ad_{e_1})^3 = (\ad_{e_2})^3 = 0$ and
  $(\ad_X)^3 = 0$ if and only if $\alpha_1 \alpha_2 = 0$, we have that
  \begin{align*}
    e^1(\varphi(e_1)) \neq 0 & \implies e^2(\varphi(e_1)) = e^1(\varphi(e_2)) = 0 \neq e^2(\varphi(e_2)) \\
    e^2(\varphi(e_1)) \neq 0 & \implies e^1(\varphi(e_1)) = e^2(\varphi(e_2)) = 0 \neq e^1(\varphi(e_2)).
  \end{align*}
  So in addition to the automorphisms $\varphi_1, \varphi_2 \in D$
  defined by the same formula as in \eqref{eq:1} we can define an
  automorphism $\varphi_3 \in D$ by
  \begin{align*}
    \varphi_3(e_1) = e_2 && \varphi_3(e_2) = e_1. 
  \end{align*}
  The above computations show that $D$ is generated by
  $\varphi_1, \varphi_2, \varphi_3$. Moreover, note that $D$ is also
  generated by $\varphi_1$ and $\varphi_1 \circ \varphi_3$, and since
  $\varphi_1^2 = (\varphi_1 \circ \varphi_3)^4$ we conclude that
  $D \simeq \Dih_4$.

  \par \textbf{Case 1.2: $\mf h = \mf h_{32}$.} Notice that
  $Z(\mf h) = \bb R e_6$ and $\im(\ad_{e_2})^3 = Z(\mf h)$. This
  property is invariant under automorphisms and
  $\im (\ad_X)^3 = Z(\mf h)$ implies $\alpha_1 = 0$. So
  $e^1(\varphi(e_2)) = 0$ and reasoning as in some previous cases,
  $D \simeq \bb Z_2$.

  \par \textbf{Case 2: $s = 3$.} In this case
  \begin{equation*}
    \mf h \in \{\mf h_{10}, \mf h_{11}, \mf h_{12}, \mf h_{13}, \mf h_{14},
    \mf h_{18}, \mf h_{19}^+, \mf h_{21}, \mf h_{22}, \mf h_{24}, \mf h_{25}, 
    \mf h_{27}\}.
  \end{equation*}
  Routine computations show that
  $\rank(\ad_{e_1}) > \rank(\ad_{e_i})$, $i = 2, 3$, for all $\mf h$
  except $\mf h_{12}$, $\mf h_{13}$ and $\mf h_{19}^+$. Also, we have
  that $\rank(\ad_{e_2}) > \rank(\ad_{e_3})$ for all $\mf h$ except
  $\mf h_{10}$, $\mf h_{11}$, $\mf h_{19}^+$ and $\mf h_{27}$.

  \par \textbf{Case 2.1:
    $\rank(\ad_{e_1}) > \rank(\ad_{e_2}) > \rank(\ad_{e_3})$.} This
  case is analogous to the generic case with $s = 2$, since
  $\alpha_1 \neq 0$ implies $\rank(\ad_X) \ge \rank(\ad_{e_1})$ and
  $\alpha_2 \neq 0$ implies $\rank(\ad_X) \ge \rank(\ad_{e_2})$. This
  shows that every $\varphi$ is triangular with respect to the
  standard basis, and $D \simeq (\bb Z_2)^3$ is generated by
  $\varphi_1, \varphi_2, \varphi_3$ defined by
  \begin{align}\label{eq:2}
    \varphi_1(e_1) & = -e_1 && \varphi_1(e_2) = e_2  && \varphi_1(e_3) = e_3 \notag \\
    \varphi_2(e_1) & = e_1  && \varphi_2(e_2) = -e_2 && \varphi_2(e_3) = e_3 \\
    \varphi_3(e_1) & = e_1  && \varphi_3(e_2) = e_2  && \varphi_3(e_3) = -e_3 \notag  
  \end{align}
  
  \par \textbf{Case 2.2:
    $\rank(\ad_{e_1}) > \rank(\ad_{e_2}) = \rank(\ad_{e_3})$.} Here we
  have three subcases.

  \par \textbf{Case 2.2.1: $\mf h = \mf h_{10}$.}  In this case we have $\rank(\ad_{e_1}) = 3$ and $\rank(\ad_{e_2}) = \rank(\ad_{e_3}) =\nobreak  1$. Moreover, if $\alpha_1 \neq 0$ then $\rank(\ad_X) = 3$, and hence $e^1(\varphi(e_2)) = e^1(\varphi(e_3)) = 0$. Also notice that $\im(\ad_{e_3}) \subset Z(\mf h)$, but $\alpha_2 \neq 0$ implies $\im(\ad_X) \not\subset Z(\mf h)$, so $e^2(\varphi(e_3)) = 0$ and $\varphi$ is triangular. One can check that $D$ is generated by $\varphi_1, \varphi_2, \varphi_3$ defined with the same formulas as in \eqref{eq:2}.

  \par \textbf{Case 2.2.2: $\mf h = \mf h_{11}$.} Here the same
  argument as in the above case works for proving that
  $\varphi \in T_6$. However, in this case $D \simeq (\bb Z_2)^2$
  since $[e_2, e_3] = -[e_1, [e_1, e_2]]$ implies
  $e^3(\varphi(e_3)) = e^1(\varphi(e_1))^2$.

  \par \textbf{Case 2.2.3: $\mf h = \mf h_{27}$.} In this case we can
  use the following variation of the above argument. Let $C^2(\mf h)$
  be the third ideal in the lower central series of $\mf h$. Then
  $\im(\ad_{e_3}) \subset C^2(\mf h)$ and $\alpha_2 \neq 0$ implies
  $\im(\ad_X) \not\subset C^2(\mf h)$. As in the previous case, we can
  see that $D \simeq (\bb Z_2)^2$.
    
  \par \textbf{Case 2.3:
    $\rank(\ad_{e_1}) = \rank(\ad_{e_2}) > \rank(\ad_{e_3})$.} We have
  to consider the following sub-cases.

  \par \textbf{Case 2.3.1: $\mf h = \mf h_{12}$.} Recall that
  $(\ad_{e_1})^2 = 0 \neq (\ad_{e_2})^2$ and $(\ad_X)^2 = 0$ if and
  only if $\alpha_2 = 0$. This shows $e^1(\varphi(e_2)) = 0$. In order
  to see that $e^1(\varphi(e_3)) = e^2(\varphi(e_3)) = 0$ notice that
  $\alpha_1 \neq 0$ or $\alpha_2 \neq 0$ imply $\rank(\ad_X) \ge 2$,
  but $\rank(\ad_{e_3}) = 1$. In this case $D = (\bb Z_2)^3$ with
  generators $\varphi_1, \varphi_2, \varphi_3$ defined as in~\eqref{eq:2}.

  \par \textbf{Case 2.3.2: $\mf h = \mf h_{13}$.} This case is more
  complicated than the previous ones since it cannot be solved using
  linear invariants. First notice that $\alpha_1 \neq 0$ or
  $\alpha_2 \neq 0$ implies
  $\operatorname{rank}(\operatorname{ad}_X) \ge 2$. This shows
  $e^1(\varphi(e_3)) = e^2(\varphi(e_3)) = 0$. Also since $\varphi$
  preserves the commutator and the center, one has
  $e^i(\varphi(e_4)) = 0$ for $i = 1, 2, 3$ and
  $e^i(\varphi(e_j)) = 0$ for $i = 1, 2, 3, 4$ and $j = 5, 6$. Now we
  use a brute force approach to recover the general form of the
  automorphism $\varphi$. When $e^1(\varphi(e_2)) = 0$ we obtain a
  subgroup of $D$ isomorphic to $\mathbb Z_2 \oplus \mathbb Z_2$ which
  is generated by the automorphisms $\varphi_1, \varphi_2$ given by
  \begin{align*}
    \varphi_1(e_1) & = -e_1, && \varphi_1(e_2) = e_2,  && \varphi_1(e_3) = -e_3, \\
    \varphi_2(e_1) & = e_1,  && \varphi_2(e_2) = -e_2, && \varphi_2(e_3) = -e_3. 
  \end{align*}

  Finally, when $e^1(\varphi(e_2)) \neq 0$ we obtain a subgroup of $D$
  isomorphic to $\mathbb Z_4$ with a generator $\varphi_3$ given by
  \begin{align*}
    \varphi_3(e_1) & = e_2, && \varphi_3(e_2) = -e_1,  && \varphi_3(e_3) = -e_3 + e_4. 
  \end{align*}

  \par Notice that
  ${\varphi_3} \circ \varphi_1 \circ {\varphi_3^{-1}} = \varphi_2$. We
  refer to the corresponding SageMath notebook for the details.
  
  \par \textbf{Case 2.4:
    $\rank(\ad_{e_1}) = \rank(\ad_{e_2}) = \rank(\ad_{e_3})$.} The
  only CSLAT in this case is $\mf h = \mf h_{19}^+$. One can easily see
  that $\Aut(\mf h)$ has at least $8$ connected components since
  $\varphi_1, \varphi_2, \varphi_3$ defined as in \eqref{eq:2} give a
  subgroup of $D$ isomorphic to $(\bb Z_2)^3$. Moreover, define
  $\varphi_4 \in \Aut(\mf h)$ by
  \begin{align*}
    \varphi_4(e_1) = e_3, && \varphi_4(e_2) = e_2, && \varphi_4(e_3) = e_1. 
  \end{align*}
  Now $\varphi_1, \ldots, \varphi_4$ generate a subgroup of order
  $16$. Let us prove that this subgroup is isomorphic to $D$. In fact,
  note that if $\im(\ad_X) \cap Z(\mf h) = 0$, then
  $\alpha_1 = \alpha_3 = 0$. This implies
  $e^1(\varphi(e_2)) = e^3(\varphi(e_2)) = 0$ and so
  $\varphi(e_2) = \gamma_2 e_2 + \gamma_4 e_4 + \gamma_5 e_5 +
  \gamma_6 e_6$. If
  $\varphi(e_1) = \beta_1 e_1 + \beta_2 e_2 + \beta_3 e_3 + \beta_4
  e_4 + \beta_5 e_5 + \beta_6 e_6$ then $\beta_1 \neq 0$ or
  $\beta_3 \neq 0$. Moreover, since $\rank(\ad_{e_1}) = 2$ then
  $\beta_2 = 0$. Now, since $0 = [e_1, e_5] = [e_1, [e_1, e_2]]$ we
  have
  \begin{equation*}
    0 = [\varphi(e_1), [\varphi(e_1), \varphi(e_2)]] = -2 \beta_1 \beta_3 \gamma_2 e_6.
  \end{equation*}
  So $\beta_1 \neq 0$ if and only if $\beta_3 = 0$. The above
  computations show that $D$ equals the subgroup generated by
  $\varphi_1, \ldots, \varphi_4$. Denote $a = \varphi_4$,
  $b = \varphi_2$ and $c = \varphi_1$. Recall that
  $\varphi_3 = \varphi_1 \circ (\varphi_4 \circ \varphi_1)^2$. Then it
  is not hard to see that
  \begin{equation*}
    D \simeq \langle a, b, c \mid a^2 = b^2 = c^2 = (ac)^4 = e, ab = ba, bc = cb \rangle \simeq {\Dih_4} \times \bb Z_2.
  \end{equation*}
  
  \par \textbf{Case 3: $s = 4$.} The only possibility here is
  $\mf h = \mf h_9$. This case was already treated in
  \cite{reggianiModuliSpaceLeftinvariant2020}.
\end{proof}

\begin{remark}
  \par Supporting calculations for the proof of Theorem
  \ref{sec:full-autom-groups-1} can be found in \cite[Notebook 02]{cardosoAuxiliarySageMathNotebooks2024} from
  the GitHub repository accompanying this paper. For the sake of
  completeness we have also included the computations for the case
  $\mf h = \mf h_9$.
\end{remark}

\par It follows from Theorem \ref{sec:full-autom-groups-1} that the
full automorphism group of $\mf h$ can be computed essentially by
taking exponential of derivations (and making appropriate changes of
sign in the diagonal after that). Since this does not give a nice
expression for a generic automorphism, we need to develop a
simplification algorithm which works as follows. Let us write
\begin{equation*}
  \Der(\mf h) = \Der_{\mf n}(\mf h) \oplus
  \Der_{\mf d}(\mf h) \qquad \text{(direct sum of subspaces),}
\end{equation*}
where $\Der_{\mf n}(\mf h)$ consists of all the nilpotent
derivations of $\mf h$. Then we choose a basis $X_1, \ldots, X_n$ of
$\Der_{\mf n}(\mf h)$ and a basis $Y_1, \ldots, Y_m$ of
$\Der_{\mf d}(\mf h)$. In order to get better results, one
must take sparse matrices for such basis. We then 
\begin{itemize}
\item take the exponential of a generic element $X = \sum a_i X_i$ and
  simplify the expression for $e^X$;
\item take the exponential of $b_j Y_j$ (separately) and simplify the
  expression for $e^{b_j Y_j}$;
\item for every $k = 1, \ldots, j$ define
  $\varphi_k = \varphi_{k-1} e^{b_k Y_k}$, where $\varphi_0 = e^{X}$,
  and simplify the expression for $\varphi _k$.
\end{itemize}

\par Note that a good simplification performed in every step is
crucial and has the objective of preserving the original position of
the coefficients $a_i$ and $b_j$ in the final expression for a generic
automorphism. In Tables \ref{tab:aut-group-3-step},
\ref{tab:aut-group-4-step} and \ref{tab:aut-group-5-step} we present
the full automorphism group for every CSLAT of dimension $6$. In the
second column of these tables we give a description of the connected
component $\Aut_0(\mf h)$ and in the third column we give the
generators of the finite subgroup $D$ of $\Aut(\mf h)$ such that
$\Aut(\mf h) = \Aut_0(\mf h) \rtimes D$. Notice that we have renamed
the free parameters to $a_0, a_1, a_2, \ldots$

\section{The moduli space of left-invariant metrics for  CSLAT}\label{sec:moduli-space-left-1}

\par Let $H$ be a simply connected Lie group with Lie algebra $\mf
h$. Every left-invariant metric on $H$ is uniquely determined by an
inner product on $\mf h$. So, after a choice of a basis for $\mf h$ we
can identify the space $\mathcal M(H)$ of all left-invariant metrics
on $H$ with the symmetric space $\Sym_n^+ = \GL_n(\bb R) / {\OO(n)}$
of all symmetric positive definite $n \times n$ matrices. If we
identify $\Aut(\mf h) \subset \GL_n(\bb R)$, then $\Aut(\mf h)$ acts
on $\Sym_n^+$ on the right by
\begin{equation*}
  g \cdot \varphi = \varphi^T \, g \, \varphi, \qquad g \in \Sym_n^+, \varphi \in \Aut(\mf h).
\end{equation*}
We say that two left-invariant metrics on $H$ are \emph{equivalent} if they lie in the same orbit under the action of $\Aut(\mf h)$ (with the above identifications). The \emph{moduli space of left-invariant metrics up to isometric automorphism} is the quotient
\begin{equation*}
  \mathcal M(H) / {\sim} = \Aut(\mf h) \backslash \Sym_n^+.
\end{equation*}

We will also consider the moduli space of left-invariant metrics up to an isometric automorphism in the connected component of the identity
\begin{equation*}
  \mathcal M(H) / {\sim_0} = \Aut_0(\mf h) \backslash \Sym_n^+.
\end{equation*}

\par Assume that $\mf h$ is a CSLAT of dimension $6$. Let $\T_6^+$ be
the connected component of the identity of $\T_6$. Since $\T_6^+$ acts simply
transitively on $\Sym_6^+$, the action of $\Aut_0(\mf h)$ on $\Sym_6^+$
is equivalent to the left action of $\Aut_0(\mf h)$ on $\T_6^+$.

\begin{theorem}
  \label{sec:moduli-space-left}
  \it Let $\mf h$ be a CSLAT of dimension $6$ and $H$ be the simply
  connected Lie group with Lie algebra $\mf h$. Then
  $\mathcal M(H) / {\sim_0}$ is a smooth manifold. Moreover,
  $\mathcal M(H) / {\sim_0}$ is diffeomorphic to a submanifold
  $\Sigma$ of $\T_6^+$ which is transversal to $\Aut_0(\mathfrak
  h)$. The explicit description of the submanifolds $\Sigma$ can be
  found in Tables~\ref{tab:sigmas-3step-no-h19},
  \ref{tab:sigmas-4step-no-h26}, \ref{tab:sigmas-5step} and
  \ref{tab:sigmas4}.
\end{theorem}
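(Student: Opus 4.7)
The plan is to translate the problem into a quotient of $\T_6^+$ by a closed connected Lie subgroup, for which the smooth manifold structure on $\mathcal M(H) / {\sim_0}$ is automatic, and then to read off the explicit slice $\Sigma$ from the parametrization of $\Aut_0(\mf h)$ produced by the simplification algorithm of Section~\ref{sec:full-aut-group}.

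First I fix the Cholesky identification $\Sym_6^+ \cong \T_6^+$, $L \mapsto L^T L$, which intertwines the congruence action $g \mapsto \varphi^T g \varphi$ with the natural multiplication action of $\Aut_0(\mf h) \subset \T_6^+$ on $\T_6^+$, using that $(L\varphi)^T(L\varphi) = \varphi^T L^T L \varphi$ and that $L\varphi \in \T_6^+$ whenever $L,\varphi \in \T_6^+$. Because $\Aut_0(\mf h) = \exp \Der(\mf h)$ with $\Der(\mf h) \subset \mf t_6$ by the proof of Proposition~\ref{sec:char-solv-lie}, $\Aut_0(\mf h)$ is a closed connected Lie subgroup of $\T_6^+$, and multiplication by a subgroup is automatically free and proper. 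Standard quotient theory then equips $\mathcal M(H)/{\sim_0}$ with a unique smooth manifold structure making the projection $\T_6^+ \to \mathcal M(H)/{\sim_0}$ a principal $\Aut_0(\mf h)$-bundle.

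Next I construct $\Sigma$ as a global slice for this action. Working in the standard coordinates $(x_{ij})_{i \ge j}$ on $\T_6^+$ (with $x_{ii} > 0$), the simplification algorithm described just before the theorem expresses a generic $\varphi \in \Aut_0(\mf h)$ in a normal form in which its free parameters $a_0, a_1, \ldots$ occupy a distinguished set $S$ of matrix entries, with $|S| = \dim \Aut_0(\mf h)$. The candidate slice $\Sigma$ is cut out by prescribing the entries indexed by $S$ (diagonal entries set to $1$, strictly lower entries to $0$, with the precise prescription dictated case by case by how the parameters $a_k$ combine into the entries of $\varphi \cdot L$). That $\Sigma$ is transversal to the $\Aut_0(\mf h)$-orbits along itself follows from a Jacobian check: the parameters $a_k$ move the entries in $S$ independently, which is exactly the content of the simplification algorithm having produced a linearly independent family of generators.

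The main obstacle, and where the proof proceeds case by case, is verifying that $\Sigma$ meets every $\Aut_0(\mf h)$-orbit in exactly one point. This amounts to showing that for each $L \in \T_6^+$ the equation expressing $L$ as $\varphi \cdot L'$ with $L' \in \Sigma$ admits a unique solution $\varphi \in \Aut_0(\mf h)$. Because the simplification algorithm introduces the $a_k$ in an order compatible with the triangular structure of $\T_6$, the normalization conditions on $\varphi \cdot L$ decouple into a cascade: the first equation fixes one $a_k$ by a polynomial or exponential expression in the already-normalized entries of $L$, the next equation fixes the next parameter, and so on until all entries in $S$ have been matched. Performing this back-substitution uniformly across the twenty CSLAT isomorphism classes is the heavy calculation; it is implemented in SageMath, and the resulting slices are tabulated in Tables~\ref{tab:sigmas-3step-no-h19}, \ref{tab:sigmas-4step-no-h26}, \ref{tab:sigmas-5step} and \ref{tab:sigmas4}. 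I remark that the non-abelian groups $D$ for $\mf h_{13}$, $\mf h_{19}^+$ and $\mf h_{26}^-$ do not enter at this stage, since we are quotienting only by the connected component $\Aut_0(\mf h)$; those subtleties re-emerge only when passing from $\mathcal M(H)/{\sim_0}$ to $\mathcal M(H)/{\sim}$.
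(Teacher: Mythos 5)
Your proof follows essentially the same route as the paper's: identify $\Sym_6^+$ with $\T_6^+$ via Cholesky so that the congruence action becomes translation by the closed connected subgroup $\Aut_0(\mf h)\subset\T_6^+$, take the smooth quotient, and exhibit $\Sigma$ as the slice whose free entries are complementary to the positions of the free parameters of $\Aut_0(\mf h)$, deferring the orbit-by-orbit verification to the case-by-case (SageMath) computation exactly as the paper does. One small inconsistency to fix: having correctly identified the transferred action as $L\mapsto L\varphi$, you later solve $L=\varphi\cdot L'$, whereas the orbit of $\sigma\in\Sigma$ is $\sigma\cdot\Aut_0(\mf h)$, so the decomposition to verify is $\T_6^+=\Sigma\cdot\Aut_0(\mf h)$ rather than $\Aut_0(\mf h)\cdot\Sigma$ --- a left/right bookkeeping slip that does not affect the strategy.
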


\begin{proof}
  The same proof given in \cite{reggianiModuliSpaceLeftinvariant2020}
  for $\mf h_9$ can be adapted to the rest of the CSLATs. Let us recall
  how the submanifold $\Sigma$ is defined. Let
  $\varphi \in \Aut_0(\mf h)$ be a generic element given as in Tables
  \ref{tab:aut-group-3-step}, \ref{tab:aut-group-4-step} and
  \ref{tab:aut-group-5-step}. First we identify the places where the
  free parameters $a_0, a_1, a_2, \ldots$ occur and set the
  corresponding non-diagonal entries of the elements of $\Sigma \subset \T_6^+$ to $0$ and
  the diagonal ones to $1$. We fill the remaining places with free
  parameters $s_0, s_1, s_2, \ldots$ where $s_i \in \bb R$ for
  non-diagonal entries and $s_i > 0$ for the diagonal ones. The map $\sigma \in \Sigma \mapsto [\sigma^T \sigma]_0 \in \mathcal M(H) / {\sim_0}$ is the desired diffeomorphism.
\end{proof}

\par The algorithm for computing the $\Sigma$'s can be found in \cite[Notebook 00]{cardosoAuxiliarySageMathNotebooks2024} from our GitHub repository. We also define for future use the set $\mathrm{nd}(\Sigma)$ of non-diagonal free parameters of $\Sigma$.

\par Now we give a general description of $\mathcal M(H) /
{\sim}$. For each $g \in \mathcal M(H)$, we denote the orbits of $g$
under $\Aut(\mf h)$ and $\Aut_0(\mf h)$ by $[g]$ and $[g]_0$
respectively. Since $\Aut(\mf h) = \Aut_0(\mf h) \rtimes D$, we have
\begin{equation*}
  [g] = \bigcup_{\delta \in D} [\delta^T g \delta]_0.
\end{equation*}
Hence, every $\Aut(\mf h)$-orbit is a finite union of
$\Aut_0(\mf h)$-orbits. This does not mean that $D$ acts naturally on
$\Sigma$.

\par However, for the CSLATs with $\Aut(\mf h) \subset \T_6$, we have
$D (\Sigma) \subset \Sigma$. Moreover, $D$ acts on $\Sigma$ as a
finite subgroup of reflections of $\bb R^{n+1}$, where
$s_0, s_1, \ldots, s_n$ is the parametrization of $\Sigma$ given in
the proof of Theorem~\ref{sec:moduli-space-left}. Hence, there exists
a submanifold $\Sigma_D \subset \Sigma$ where $D$ acts trivially.  For
$\mf h_{19}^+$ and $\mf h_{26}^-$, we have $D \subset \OO(6)$ and
there exist a non-trivial submanifold $\Sigma_D \subset \Sigma$ as
before where $D$ acts trivially, although $D$ is no longer a subgroup
of reflections of $\Sigma$. For $\mf h_{13}$ we have
$D \not\subset \OO(6)$ and one can easily see that
$\Sigma_D = \varnothing$. In some cases, $\Sigma_D$ completely
determines the structure of $\mathcal M(H) / {\sim}$, but in general
this information is not enough. Supporting computations for these
facts and for the following examples are given in the attached notebook \cite[Notebook 03]{cardosoAuxiliarySageMathNotebooks2024}.

\begin{example}
  Let $\mf h = \mf h_{24}$. Recall that $D \simeq \bb Z_2$. Here we
  have that $\Sigma_D$ is given by elements of the form
  \begin{equation*}
    \begin{psmallmatrix}
      1 & 0 & 0 & 0 & 0 & 0 \\
      0 & s_{0} & 0 & 0 & 0 & 0 \\
      0 & 0 & s_{1} & 0 & 0 & 0 \\
      0 & 0 & s_{2} & s_{3} & 0 & 0 \\
      0 & 0 & 0 & 0 & s_{6} & 0 \\
      0 & 0 & 0 & s_{7} & 0 & s_{9}
    \end{psmallmatrix}
  \end{equation*}
  and $[g] = [g]_0$ if and only if $g = \sigma^T \sigma$ for some
  $\sigma \in \Sigma_D$.
\end{example}

\begin{example}
  Let $\mf h = \mf h_{11}$. In this case $D \simeq \bb Z_2 \oplus \bb
  Z_2$ is generated by the automorphisms $\varphi_1, \varphi_2$ given
  in the proof of Theorem~\ref{sec:full-autom-groups-1}. If we denote
  by $\Sigma_{\varphi_i}$ the submanifold of $\Sigma$ where
  $\varphi_i$ acts trivially, then
  \begin{equation*}
    \Sigma_D = \Sigma_{\varphi_1} \cap \Sigma_{\varphi_2}.
  \end{equation*}
  For a given left-invariant metric $g$, we have that $[g]$ is the
  union of $1$, $2$, or $4$ $\Aut_0(\mf h)$-orbits if and only if
  $[g]_0 = [\sigma^T \sigma]_0$, for $\sigma \in \Sigma_D$,
  $\sigma \in \Sigma_{\varphi_1} \triangle\, \Sigma_{\varphi_2}$ or
  $\sigma \in \Sigma - (\Sigma_{\varphi_1} \cup \Sigma_{\varphi_2})$,
  respectively.
\end{example}

\begin{example}
  For $\mf h = \mf h_{19}^+$ we have $\Sigma_D$ consists of matrices
  of the form
  \begin{equation*}
    \begin{psmallmatrix}
      1 & 0 & 0 & 0 & 0 & 0 \\
      0 & 1 & 0 & 0 & 0 & 0 \\
      0 & 0 & 1 & 0 & 0 & 0 \\
      0 & 0 & 0 & s_{3} & 0 & 0 \\
      0 & 0 & 0 & 0 & s_{3} & 0 \\
      0 & 0 & 0 & 0 & 0 & s_{9}
    \end{psmallmatrix}
  \end{equation*}
  and for $\mf h = \mf h_{26}^-$ we have that $\Sigma_D$ consists of
  matrices of the form
  \begin{equation*}
    \begin{psmallmatrix}
      1 & 0 & 0 & 0 & 0 & 0 \\
      0 & 1 & 0 & 0 & 0 & 0 \\
      0 & 0 & s_{1} & 0 & 0 & 0 \\
      0 & 0 & 0 & s_{3} & 0 & 0 \\
      0 & 0 & 0 & 0 & s_{3} & 0 \\
      0 & 0 & 0 & 0 & 0 & s_{10}
    \end{psmallmatrix}.
  \end{equation*}
\end{example}

\begin{example}
  Let $\mf h = \mf h_{13}$. Consider the metric
  \begin{equation*}
    g_1 =
    \begin{psmallmatrix}
      2 & 1 & 0 & 0 & 0 & 0 \\
      1 & 1 & 0 & 0 & 0 & 0 \\
      0 & 0 & 1 & 0 & 0 & 0 \\
      0 & 0 & 0 & 1 & 0 & 0 \\
      0 & 0 & 0 & 0 & 1 & 0 \\
      0 & 0 & 0 & 0 & 0 & 1
    \end{psmallmatrix}
    = \sigma_1^T \sigma_1 \text{ with } \sigma_1 =
    \begin{psmallmatrix}
      1 & 0 & 0 & 0 & 0 & 0 \\
      1 & 1 & 0 & 0 & 0 & 0 \\
      0 & 0 & 1 & 0 & 0 & 0 \\
      0 & 0 & 0 & 1 & 0 & 0 \\
      0 & 0 & 0 & 0 & 1 & 0 \\
      0 & 0 & 0 & 0 & 0 & 1
    \end{psmallmatrix}
    \in \Sigma.
  \end{equation*}
  Now let $g_2 = \varphi_3^T g_1 \varphi_3$ where $\varphi_3$ is given
  as in the proof of Theorem \ref{sec:full-autom-groups-1}. Then,
  there does not exist $\sigma \in \Sigma$ such that $g_2 = \sigma^T
  \sigma$. However, $g_2 \in [g_3]_0$ where
  \begin{equation*}
    g_3 =
    \begin{psmallmatrix*}[r]
      2 & -1 & 0 & 0 & 0 & 0 \\
      -1 & 1 & 0 & 0 & 0 & 0 \\
      0 & 0 & 2 & 1 & 0 & 0 \\
      0 & 0 & 1 & 1 & 0 & 0 \\
      0 & 0 & 0 & 0 & 2 & 0 \\
      0 & 0 & 0 & 0 & 0 & \frac{1}{2}
    \end{psmallmatrix*}
    = \sigma_3^T \sigma_3 \text{ with } \sigma_3 =
    \begin{psmallmatrix*}[r]
      1 & 0 & 0 & 0 & 0 & 0 \\
      -1 & 1 & 0 & 0 & 0 & 0 \\
      0 & 0 & 1 & 0 & 0 & 0 \\
      0 & 0 & 1 & 1 & 0 & 0 \\
      0 & 0 & 0 & 0 & \sqrt{2} & 0 \\
      0 & 0 & 0 & 0 & 0 & \frac{1}{2} \, \sqrt{2}
    \end{psmallmatrix*}
    \in \Sigma.
  \end{equation*}
\end{example}

\section{The full isometry group of a CSLAT}
\label{sec:full-isometry-group-1}

\par Let $\mf h$ be a CSLAT of dimension $6$ and let $H$ be the
associated simply connected Lie group. We identify
$\mathcal M(H) / {\sim_0}$ with $\Sigma \subset \T_6^+$ as in Theorem
\ref{sec:moduli-space-left}. Given $\sigma \in \Sigma$, define
\begin{equation*}
  g_\sigma = \sigma^T \sigma \in \Sym_6^+.
\end{equation*}
We identify as usual $g_\sigma$ with an inner product on $\mf h$ and
the corresponding left-invariant metric on $H$. In this section we
address the problem of computing the full isometry group
$\I(H, g_\sigma)$ of $g_\sigma$. According to
\cite{wolfLocallySymmetricSpaces1963} we know that
\begin{equation*}
  \I(H, g_\sigma) \simeq H \rtimes K, 
\end{equation*}
where $H$ is identified as the subgroup of left translations of $H$ and
$K$ is the full isotropy group. Moreover, via the isotropy
representation we have the isomorphism
\begin{equation*}
  K \simeq \Aut(\mf h) \cap \OO(g_\sigma).
\end{equation*}
Recall from Remark \ref{sec:full-autom-groups-2} that
$\Aut(\mf h) = \Aut_0(\mf h) \rtimes D$, where $D$ is described in
Theorem \ref{sec:full-autom-groups-1}.

\begin{theorem}
  \label{sec:full-isometry-group} 
  \it We keep the notation of this section. Assume that $\mf h$ is not isomorphic to $\mf h_{13}, \mf h_{19}^+$ nor $\mf h_{26}^-$. Then $K \subset D$. 
\end{theorem}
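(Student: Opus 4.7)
The plan is to exploit the triangular structure coming from Theorem~\ref{sec:full-autom-groups-1}: the hypothesis $\mf h \not\simeq \mf h_{13}, \mf h_{19}^+, \mf h_{26}^-$ excludes exactly the cases in which $\Aut(\mf h)$ fails to sit inside $\T_6$, and in all remaining cases $\Aut(\mf h) = \Aut_0(\mf h) \rtimes D$ with $D$ realized as a subgroup of the diagonal $\pm 1$ matrices (this is visible from the explicit generators $\varphi_i$ produced in the proof of that theorem). I would combine this triangularity with the transversality of $\Sigma$ provided by Theorem~\ref{sec:moduli-space-left}.

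First I would take $\varphi \in K$ and write $\varphi = \varphi_0 \delta$ with $\varphi_0 \in \Aut_0(\mf h)$ and $\delta \in D$. Since $\sigma \in \T_6^+$ and $\varphi \in \T_6$, the conjugate $\sigma \varphi \sigma^{-1}$ is lower triangular; being in $\OO(6)$, it must be diagonal with $\pm 1$ entries. Comparing diagonals, which are preserved under conjugation by an element of $\T_6^+$, this orthogonal matrix agrees with $\varphi$ on the diagonal, and since $\varphi_0$ has positive diagonal entries it must in fact coincide with $\delta$. Thus
\begin{equation*}
  \sigma \varphi \sigma^{-1} = \delta, \qquad \text{equivalently,} \qquad \sigma \varphi_0 = \delta \sigma \delta^{-1} =: \sigma'.
\end{equation*}

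The key observation is then that $\sigma' \in \Sigma$: conjugation by the diagonal sign matrix $\delta$ preserves each diagonal entry of $\sigma$, sends zero entries to zero, and sends each real off-diagonal free parameter $s_j$ to $\pm s_j \in \bb R$; so $\sigma'$ still satisfies the constraints defining $\Sigma$ in the proof of Theorem~\ref{sec:moduli-space-left}. But $\sigma' = \sigma \varphi_0$ says that $g_{\sigma'}$ and $g_\sigma$ lie in the same $\Aut_0(\mf h)$-orbit, and since $\Sigma$ parametrizes $\mathcal M(H)/{\sim_0}$ bijectively we conclude $\sigma' = \sigma$ and $\varphi_0 = \id$. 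Hence $\varphi = \delta \in D$.

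The main point requiring care is the verification that $\sigma' = \delta \sigma \delta^{-1}$ lies in $\Sigma$: this is the step where the specific description of $\Sigma$ (in which the non-diagonal free positions are complementary to those of $\Aut_0(\mf h)$) is genuinely used, together with the fact that $D$ acts by sign flips on the coordinates $s_j$ parametrizing $\Sigma$. Everything else is then a direct consequence of the uniqueness of $\Aut_0(\mf h)$-orbit representatives in $\Sigma$.
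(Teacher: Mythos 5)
Your argument is correct, but it is not the route the paper takes. The paper's proof works entirely at the level of the single matrix equation $\varphi^T g_\sigma = g_\sigma \varphi$: it first applies the Chevalley decomposition $\varphi = S+N$ and a boundedness argument (powers of an element of $\OO(g_\sigma)$ cannot blow up) to show the nilpotent part vanishes and $\varphi$ is an involution with $\pm 1$ diagonal, and then analyses the resulting linear system in the entries of $\varphi$, using the observation that whenever a free parameter $a_\ell$ of $\Aut_0(\mf h)$ first appears in position $(i,j)$ one has $\sigma_{ij}=0$; the remaining verification is partly delegated to a case-by-case computation. You instead conjugate: $\sigma\varphi\sigma^{-1}$ is lower triangular and orthogonal, hence a diagonal sign matrix equal to $\delta$, which converts the isometry condition into the exact identity $\sigma\varphi_0 = \delta\sigma\delta^{-1}$; you then invoke the injectivity of the parametrization $\Sigma \to \mathcal M(H)/{\sim_0}$ from Theorem~\ref{sec:moduli-space-left} together with the fact that conjugation by $\delta$ preserves $\Sigma$ (which the paper records separately when it discusses the action of $D$ on $\Sigma$) to force $\varphi_0 = \id$. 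Your version buys a cleaner, computation-free argument that isolates exactly which structural inputs are needed — triangularity of $\Aut(\mf h)$ with diagonal $D$, and $\Sigma$ being a strict transversal stable under $D$ — whereas the paper's version is independent of the $\Sigma$ parametrization and of the statement that $D(\Sigma)\subset\Sigma$, at the cost of the Chevalley step and some inspection. The only points worth spelling out more carefully in your write-up are (i) the entrywise deduction $\diag(\varphi_0)=(1,\dots,1)$ from $\diag(A)=\diag(\varphi_0)\diag(\delta)$ with $\diag(A),\diag(\delta)\in\{\pm1\}^6$ and $\diag(\varphi_0)>0$, and (ii) the fact that $\Aut_0(\mf h)\subset \T_6^+$ (positive diagonal), which is visible from the tables; both are true and routine.
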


\begin{proof}
  Notice that in these cases, under the usual identifications,
  $\Aut(\mf h) \subset \T_6$ and $D \simeq \bb Z_2^k$ for some
  $k \in \{1, 2, 3\}$. Let
  $\varphi \in K = \Aut(\mf h) \cap \OO(g_\sigma)$ and let
  $\varphi = S + N$ its Chevalley decomposition, i.e., $N$ is
  nilpotent, $S$ is semisimple, $[N, S] = 0$ and $N$, $S$ are
  polynomials in $\varphi$. Then $\varphi$ and $S$ have the same
  diagonal, and hence the elements on the diagonal are $\pm 1$. We
  claim that $N = 0$. Indeed, first we assume that
  $\diag \varphi = (1, \ldots, 1)$. So, $\varphi = N$ and the set
  $\{\varphi^n: n \in \mathbb N\}$ is unbounded in $\Aut(\mf h)$,
  which contradicts the fact that $\varphi \in \OO(g_\sigma)$. In the
  general case we have that $\diag \varphi^2 = \diag S^2 = (1, \ldots,
  1)$ and so the nilpotent part of $\varphi^2$ is trivial. But
  $\varphi^2 = (S + N)^2 = S^2 + 2SN + N^2$ and thus $N^2 = -2SN$ which
  is impossible unless $N = 0$.

  Now we have that $\varphi = S \in K$ is a semisimple automorphism
  such that $\varphi^2 = \id$, which satisfy the isometry condition
  \begin{equation}\label{eq:3}
    \varphi^T g_\sigma = g_\sigma \varphi.
  \end{equation}
  Observe that equation \eqref{eq:3} is linear in the coefficients of
  $\varphi$ (although it is not necessarily linear in the free
  parameters $a_0, a_1, a_2, \ldots$ describing $\varphi$). Also
  notice that if a free parameter $a_\ell$ appears for the first time
  (with the lexicographic order) in the entry $\varphi_{ij}$, then
  $\sigma_{ij} = 0$. From these facts it is not hard to conclude that
  all the entries below the diagonal of $\varphi$ are zero. Hence
  $\varphi \in D$. One can also prove this by direct inspection. Such
  computations are easy (but long and tedious) and are provided in the
  accompanying notebook \cite[Notebook 04]{cardosoAuxiliarySageMathNotebooks2024}.
\end{proof}

\begin{remark}
  \par From Theorem \ref{sec:full-isometry-group}, one can easily
  compute the full isometry group of any $g_\sigma$ for all the
  $\mf h$ such that $\Aut(\mf h) \subset \T_6$. Such description is
  rather involved and it is not convenient to be included in the
  manuscript. However, we do include in Tables
  \ref{tab:sigmas-3step-no-h19}, \ref{tab:sigmas-4step-no-h26} and
  \ref{tab:sigmas-5step} some relevant information on the metrics with
  nontrivial subgroup of isometric automorphisms. These tables must be
  read as follows. The first and second columns present the name of
  the Lie algebra and a generic representative of every left-invariant
  metric up to isometric automorphism (actually, here we keep it
  simple by considering $\mathcal M(H) / {\sim_0}$ instead of
  $\mathcal M(H) / {\sim}$). For each generic metric $g_\sigma$, one
  can set equal to $0$ a number $p$ of non diagonal parameters. We
  indicate this in the third column with
  $\# \operatorname{nd}(\Sigma) = 0$. That is, if
  $\operatorname{nd}(\Sigma)$ has $n$ elements, then there are $\binom np$
  possible such substitutions. The last columns are for the
  isomorphism classes of subgroups of $D$ (recall that $K$ is a
  subgroup of $D$) and we present the amount of substitutions which
  have $K$ isomorphic to one of these groups.

  \par For example, if $\mf h = \mf h_{12}$, then
  $\operatorname{nd}(\Sigma) = \{s_0, s_1, s_3, s_5, s_6\}$ and we
  have $\binom 52 = 10$ ways to choose two parameters equal to
  $0$. For $8$ of these choices we have $K = \{e\}$ and for the
  remaining $2$ we have $K \simeq \mathbb Z_2$. The explicit
  substitutions and subgroups $K \hookrightarrow D$ can be found in
  \cite[Notebook 05]{cardosoAuxiliarySageMathNotebooks2024}.
\end{remark}

\begin{remark}
  \par According to Theorem \ref{sec:full-isometry-group}, if
  $\mf h \not\simeq \mf h_{13}, \mf h_{19}^+, \mf h_{26}^-$, there
  exists a finite number of subgroups of $\Aut(\mf h)$ (and moreover,
  of $D$) that can be realized as the full isotropy group of
  $\I(H, g_\sigma)$ for any $\sigma \in \Sigma$. This is not true if
  $\mf h = \mf h_{13}, \mf h_{19}^+$ or $\mf h_{26}^-$. In fact, if $r
  \in (0, 1]$ then the automorphisms
  \begin{align*}
    {\varphi_r =
    \begin{psmallmatrix*}[r]
      0 & -r & 0 & 0 & 0 & 0 \\
      -\frac{1}{r} & 0 & 0 & 0 & 0 & 0 \\
      0 & 0 & 1 & 0 & 0 & 0 \\
      0 & 0 & -1 & -1 & 0 & 0 \\
      0 & 0 & 0 & 0 & 0 & r \\
      0 & 0 & 0 & 0 & \frac{1}{r} & 0
    \end{psmallmatrix*}},
    &&
       \varphi_r' =
       \begin{psmallmatrix*}[r]
         0 & 0 & r & 0 & 0 & 0 \\
         0 & 1 & 0 & 0 & 0 & 0 \\
         \frac{1}{r} & 0 & 0 & 0 & 0 & 0 \\
         0 & 0 & 0 & 0 & \frac{1}{r} & 0 \\
         0 & 0 & 0 & r & 0 & 0 \\
         0 & 0 & 0 & 0 & 0 & 1
       \end{psmallmatrix*},
    &&
       \varphi_r'' =
       \begin{psmallmatrix*}[r]
         0 & r & 0 & 0 & 0 & 0 \\
         \frac{1}{r} & 0 & 0 & 0 & 0 & 0 \\
         0 & 0 & -1 & 0 & 0 & 0 \\
         0 & 0 & 0 & 0 & -r & 0 \\
         0 & 0 & 0 & -\frac{1}{r} & 0 & 0 \\
         0 & 0 & 0 & 0 & 0 & -1
       \end{psmallmatrix*}
  \end{align*}
  of $\mf h_{13}$, $\mf h_{19}^+$ and $\mf h_{26}^-$ respectively are
  isometric with respect to $g_{\sigma_r}$, $g_{\sigma'_r}$ and
  $g_{\sigma_r''}$ where
  \begin{align*}
    {\sigma_r =
    \begin{psmallmatrix*}[r]
      1 & 0 & 0 & 0 & 0 & 0 \\
      \frac{\sqrt{-r^{2} + 1}}{r} & 1 & 0 & 0 & 0 & 0 \\
      0 & 0 & 1 & 0 & 0 & 0 \\
      0 & 0 & \frac{1}{2} & 1 & 0 & 0 \\
      0 & 0 & 0 & 0 & \frac{1}{r} & 0 \\
      0 & 0 & 0 & 0 & 0 & 1
    \end{psmallmatrix*}},
    &&
    \sigma_r' =
       \begin{psmallmatrix*}[r]
         1 & 0 & 0 & 0 & 0 & 0 \\
         \frac{\sqrt{-r^{2} + 1}}{r} & 1 & 0 & 0 & 0 & 0 \\
         0 & \sqrt{-r^{2} + 1} & 1 & 0 & 0 & 0 \\
         0 & 0 & 0 & r^{2} & 0 & 0 \\
         0 & 0 & 0 & 0 & r & 0 \\
         0 & 0 & 0 & 0 & 0 & 1
       \end{psmallmatrix*},
 &&
       \sigma_r'' =
       \begin{psmallmatrix*}[r]
         1 & 0 & 0 & 0 & 0 & 0 \\
         \frac{\sqrt{-r^{2} + 1}}{r} & 1 & 0 & 0 & 0 & 0 \\
         0 & 0 & 1 & 0 & 0 & 0 \\
         0 & 0 & 0 & \frac{1}{r} & 0 & 0 \\
         0 & 0 & 0 & 0 & 1 & 0 \\
         0 & 0 & 0 & 0 & 0 & 1
       \end{psmallmatrix*}.
  \end{align*}
  See \cite[Notebook 06]{cardosoAuxiliarySageMathNotebooks2024} for verification. However, in each of these cases the
  subgroup $K$ is conjugated to a subgroup of $D$, since $D$ is a
  maximal compact subgroup of $\Aut(\mf h)$.
\end{remark}

\section{The index of symmetry for CSLAT}
\label{sec:index-symmetry-csla-title}

\par This section concerns the computation of the index of symmetry of
a left-invariant metric on a Lie group associated with a CSLAT. We
refer to \cite{olmosIndexSymmetryCompact2014} and
\cite{berndtCompactHomogeneousRiemannian2017} for the general
definitions and the structure theory related with the index of
symmetry of a general homogeneous Riemannian manifold.

Let $\mathfrak h$ be a CSLAT of dimension $6$ and let $H$ be the
associated simply connected Lie group. Endow $H$ with a left-invariant
Riemannian metric $g$ and denote by $\I(H)$ its full isometry
group. Since the isotropy subgroup of $\I(H)$ is discrete, every
Killing field on $H$ is right-invariant. Let $X, Y, Z \in \mathfrak h$
and denote by $X^*, Y^*, Z^*$ the corresponding right-invariant vector
fields. By definition, the \emph{distribution of symmetry} at the
identity $e$ is given by
\begin{equation*}
  \mathfrak s_e = \{X_e: X \in \mathfrak h \text{ and } (\nabla X^*)_e = 0\}.
\end{equation*}

Since $\mathfrak s$ is $\I(H)$-invariant, it coincides with the
distribution generated by the left-invariant fields
$X \in \mathfrak h$ such that $X_e \in \mathfrak s_e$. The so-called
\emph{index of symmetry} of $(H, g)$ is the rank of $\mf s$.  Recall
the well-known Kozsul formula for Killing fields: for all
right-invariant fields $X^*, Y^*, Z^*$, we have
\begin{equation*}
  g(\nabla_{X^*}Y^*, Z^*) =  \frac12 (g([X^*, Y^*], Z^*) + g([X^*, Z^*], Y^*) + g([Y^*, Z^*], X^*)).
\end{equation*}
So, $Y \in \mathfrak s$ if and only if for all $X, Z \in \mathfrak h$
it holds that
\begin{align*}
0 & = 2 g((\nabla_{X^*}Y^*)_e, (Z^*)_e) \\
& =  g([X^*, Y^*]_e, (Z^*)_e) + g([X^*, Z^*]_e, (Y^*)_e) + g([Y^*, Z^*]_e, (X^*)_e) \\
& = -g([X, Y]_e, Z_e) - g([X, Z]_e, Y_e) - g([Y, Z]_e, X_e),
\end{align*}
since $[X^*, Z^*]_e = -[X, Z]^*_e$. So, the above equation is
equivalent to
\begin{equation}\label{eq:6}
  g([X, Y], Z) + g([X, Z], Y) + g([Y, Z], X) = 0
\end{equation}
for all $X, Z \in \mathfrak h$. Notice that we have identified with
the same symbol $g$ the Riemannian metric on $H$ and the corresponding
inner product on $\mathfrak h$. If $e_1, \ldots, e_6$ is the standard
basis of $\mathfrak h$ and we write $Y = \sum y_i e_i$, then the
previous equation is linear in $y_1, \ldots, y_6$. This equation is
not linear if we also consider the metric coefficients. Since every
$g$ is isometric to a metric given by $g_\sigma = \sigma^T
\sigma$, for some $\sigma \in \Sigma$, in order to determine the
existence of metrics with non-trivial index of symmetry it is enough
to work with the metrics of this form.

\begin{theorem}\label{sec:index-symmetry-csla}
  \it Let $\mf h$ be a CSLAT of dimension $6$ and let $H$ be its associated   simply connected Lie group. Assume that $\mf h$ is not isomorphic to $\mf h_9, \mf h_{10}, \mf h_{21}, \mf h_{22}$ nor $\mathfrak h_{28}$. Then every left-invariant metric on $H$ has trivial index of symmetry.
\end{theorem}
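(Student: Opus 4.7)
The proof proceeds by a uniform scheme applied case by case. Fix $\mf h$ among the eighteen CSLATs of dimension $6$ not appearing in the list of exceptions, and let $e_1, \ldots, e_6$ be its standard basis. By Theorem \ref{sec:moduli-space-left}, every left-invariant metric on $H$ is isometric (via an element of $\Aut_0(\mf h)$) to some $g_\sigma = \sigma^T \sigma$ with $\sigma \in \Sigma$. Since the distribution of symmetry is an isometric invariant, it suffices to prove $\mathfrak s_e = 0$ for every such $g_\sigma$.

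Fix $\sigma \in \Sigma$ and write $Y = \sum_{i=1}^6 y_i e_i$. Specializing the condition \eqref{eq:6} to pairs $X = e_i$, $Z = e_j$ with $1 \le i < j \le 6$ produces a linear system of at most $\binom{6}{2} = 15$ equations in the unknowns $y_1, \ldots, y_6$, whose coefficients are polynomials in the free parameters $s_0, s_1, \ldots$ describing $\sigma$. To prove that $Y = 0$ is the only solution, the plan is to exhibit, for each of the eighteen Lie algebras, a $6 \times 6$ subsystem whose determinant is a polynomial in the $s_i$ that does not vanish on the admissible region of $\Sigma$ (recall the diagonal parameters are strictly positive, the off-diagonal ones arbitrary real). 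This is the analogue of showing that the ``symmetry system'' is of full rank throughout the moduli space.

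In practice I would first exploit structural consequences of \eqref{eq:6} to cut down the size of the symbolic computation. Setting $X = Y$ yields $g_\sigma([Y, Z], Y) = 0$ for every $Z$, so $[Y, \mf h] \subset Y^\perp$; setting both $X$ and $Z$ equal to basis vectors from the last non-trivial term of the lower central series (where the bracket with arbitrary $Y$ is forced), one obtains easy equations that immediately pin down the ``top'' coordinates $y_6, y_5, \ldots$ of $Y$ in terms of metric parameters that generically do not vanish. Cascading this vanishing downward through the nilpotency filtration, each subsequent $y_i$ is forced to be zero by a single equation whose leading coefficient is a nonzero polynomial in the $s_i$.

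The main obstacle is the case load: eighteen non-isomorphic algebras, each with its own parametrization of $\Sigma$ from the tables of Section~\ref{sec:moduli-space-left-1} and its own bracket relations. The polynomial identities to be verified are tractable by hand in the low-step algebras (\emph{e.g.}~$\mf h_{11}, \mf h_{12}$), but for the five-step CSLATs $\mf h_{29}, \ldots, \mf h_{32}$ the number of parameters is large enough that the verification is best carried out symbolically in SageMath, in the same spirit as the computations supporting Sections~\ref{sec:full-aut-group} and~\ref{sec:moduli-space-left-1}. The qualitative point that distinguishes the eighteen admissible algebras from the five excluded ones is precisely that on $\mf h_9, \mf h_{10}, \mf h_{21}, \mf h_{22}$ and $\mf h_{28}$ the analogous $6 \times 6$ determinant vanishes identically on a subvariety of $\Sigma$ (producing metrics with positive index of symmetry), whereas on the remaining CSLATs it stays non-zero throughout the parameter domain.
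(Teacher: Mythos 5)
Your proposal is essentially the paper's own proof: the authors reduce to the representatives $g_\sigma$, $\sigma \in \Sigma$, observe that \eqref{eq:6} is linear in the coefficients of $Y$, and verify case by case in SageMath that the resulting linear system has trivial kernel for all admissible parameter values (the paper defers the entire computation to an attached notebook). The only caveat is that a single $6\times 6$ minor need not be nonvanishing on all of $\Sigma$ even when the $15\times 6$ system has full rank everywhere, so the verification may require several minors with no common zero in the admissible region, but this does not change the substance of the argument.
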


As far as we know, these are the first examples in the literature of a
Lie group all of whose left-invariant metrics have trivial index of
symmetry. Now we direct our attention to the Lie groups with
characteristically solvable Lie algebra (of triangular type) which admit left-invariant
metrics with nontrivial index of symmetry. We use the notation of
Section \ref{sec:moduli-space-left-1} for the left-invariant metrics
under study. We begin with the cases where the distribution of symmetry
is contained in the center of the Lie algebra.

\begin{theorem}\label{sec:index-symmetry-csla-1}
  \it The left-invariant metric $g_\sigma = \sigma ^T \sigma$, $\sigma \in  \Sigma$, on $H_9$ has nontrivial index of symmetry if and only if $s_2 = 0$. In this case, the index of symmetry is $1$ and the distribution of symmetry is generated by the left-invariant field $Y = e_4$. In particular, $\mf s$ is properly contained in the central distribution of $H_9$. 
\end{theorem}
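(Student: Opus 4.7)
The plan is to apply the linearized characterization derived immediately before the statement: writing $Y = \sum_{i=1}^{6} y_i e_i$, we have $Y \in \mathfrak{s}$ if and only if
\begin{equation*}
  g_\sigma([X,Y],Z) + g_\sigma([X,Z],Y) + g_\sigma([Y,Z],X) = 0 \quad \text{for all } X, Z \in \mathfrak{h}_9.
\end{equation*}
Letting $(X,Z) = (e_i, e_j)$ for $1 \le i < j \le 6$ produces a linear system of at most $15$ equations in the six unknowns $y_1, \dots, y_6$ whose coefficients are polynomials in the free parameters of $\sigma \in \Sigma$; the index of symmetry equals the dimension of its kernel, and $\mathfrak{s}_e$ is that kernel. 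The whole statement therefore reduces to a symbolic linear-algebra computation, which I would organize in two halves.

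First, I would exhibit the distribution. From the standard-basis structure $(0,0,0,0,12,51+23)$ of $\mathfrak{h}_9$ one reads off that $e_4$ is central and $[\mathfrak{h}_9, \mathfrak{h}_9]$ is the span of the last two basis vectors. For a central $Y$ both $[X,Y]$ and $[Y,Z]$ vanish, so the symmetry equation with $Y = e_4$ collapses to $g_\sigma(e_4, [X,Z]) = 0$ for all $X, Z$, i.e.\ $e_4 \perp [\mathfrak{h}_9, \mathfrak{h}_9]$ with respect to $g_\sigma$. Computing $g_\sigma = \sigma^T \sigma$ from the explicit parametrization of $\Sigma$ in the appendix tables, this orthogonality splits into two scalar conditions on the non-diagonal parameters of $\sigma$, one of which is automatic from the triangular shape of $\sigma$ and the other of which I expect to reduce to precisely $s_2 = 0$.

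Second, to show that $\mathfrak{s}_e$ equals $\mathrm{span}\{e_4\}$ and not something larger, I would feed the symmetry equation with carefully chosen pairs $(X,Z) = (e_i, e_j)$ so that exactly one bracket is nonzero and the resulting scalar equation has a leading term $y_k$ multiplied by a nonvanishing combination of diagonal entries of $\sigma$ (which are strictly positive by definition of $\Sigma$). Running this procedure through $k \in \{1, 2, 3, 5, 6\}$ forces $y_k = 0$, both when $s_2 \ne 0$ and when $s_2 = 0$. The final assertion that $\mathfrak{s}$ is properly contained in the central distribution is then automatic once one notes that the centre of $\mathfrak{h}_9$ is two-dimensional, containing both $e_4$ and the top generator $e_6$, while $\mathfrak{s}_e$ has dimension one.

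The main obstacle is making the second half uniform in $\sigma \in \Sigma$: because the coefficients of the linear system are polynomials in the free parameters $s_i$, one must certify that the $5 \times 5$ subsystem governing the coordinates $y_k$ with $k \ne 4$ has nonvanishing determinant on all of $\Sigma$, including on the codimension-one locus $\{s_2 = 0\}$ where the geometry becomes exceptional. This bookkeeping is most efficiently handled by the symbolic computation infrastructure already developed in the SageMath notebooks accompanying the paper.
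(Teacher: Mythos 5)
Your proposal is correct and follows essentially the same route as the paper, which simply solves the linear system \eqref{eq:6} for $Y=\sum y_ie_i$ symbolically (the published proof defers the entire computation to the accompanying SageMath notebook). Your two observations both check out concretely: for central $Y=e_4$ the condition reduces to $g_\sigma(e_4,[\mf h_9,\mf h_9])=0$, where $g_\sigma(e_4,e_6)=0$ automatically and $g_\sigma(e_4,e_5)=s_2s_3$ with $s_3>0$; and the pairs $(e_3,e_6),(e_5,e_6),(e_2,e_6),(e_1,e_6),(e_1,e_5)$ successively force $y_2=y_1=y_3=y_5=y_6=0$ with leading coefficient $\pm s_5^2>0$ uniformly on $\Sigma$, leaving only $y_4s_2s_3=0$ from the pair $(e_1,e_2)$.
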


\begin{theorem}\label{sec:index-symmetry-csla-2}
  \it The left-invariant metric $g_\sigma = \sigma ^T \sigma$, $\sigma \in \Sigma$ on $H_{22}$, has nontrivial index of symmetry if and only if $s_1 = s_3 = 0$. In this case, the index of symmetry is $1$ and the distribution of symmetry is generated by the left-invariant field $Y = e_3$. In particular, $\mf s$ is properly contained in the central distribution of $H_{22}$.
\end{theorem}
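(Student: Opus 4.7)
The plan is to apply the characterization \eqref{eq:6} of the distribution of symmetry directly. Writing $Y = \sum_{i=1}^6 y_i e_i$ and letting $X = e_i$, $Z = e_j$ range over ordered pairs $1 \le i < j \le 6$ in the standard basis of $\mf h_{22}$, condition \eqref{eq:6} becomes a homogeneous linear system of at most $15$ equations in the unknowns $y_1, \ldots, y_6$, with coefficients polynomial in the free parameters $s_0, s_1, \ldots$ of $\sigma \in \Sigma$. Since every left-invariant metric on $H_{22}$ is $\Aut_0(\mf h_{22})$-equivalent to some $g_\sigma = \sigma^T \sigma$ with $\sigma \in \Sigma$, and the distribution of symmetry is left-invariant and preserved by isometries, it suffices to determine for which $\sigma \in \Sigma$ the above system admits a nonzero solution.

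The calculation is tractable because the bracket of $\mf h_{22}$ is very sparse: the only nontrivial brackets are $[e_1, e_2] = -e_4$, $[e_1, e_4] = -e_5$, $[e_1, e_5] = -e_6$ and $[e_2, e_4] = -e_6$, so $e_3$ is central, $\mf h_{22} = \bb R e_3 \oplus \mf h_{22}'$, and $[\mf h_{22}, \mf h_{22}] = \operatorname{span}(e_4, e_5, e_6)$. Only four of the fifteen pairs $(i,j)$ satisfy $[e_i, e_j] \neq 0$; for the remaining eleven pairs \eqref{eq:6} simplifies to $g_\sigma([e_i, Y], e_j) + g_\sigma([Y, e_j], e_i) = 0$, which already forces most of the coordinates $y_k$ to vanish regardless of $\sigma$.

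The next step is to compute the rank of the resulting coefficient matrix as a function of the parameters $s_i$. The expected outcome is that for generic $\sigma$ the kernel is trivial, while along the locus $\{s_1 = s_3 = 0\}$ the rank drops by one and the kernel becomes one-dimensional, spanned by $Y = e_3$. The main obstacle will be the polynomial bookkeeping needed to identify exactly which non-diagonal parameters of the tabulated $\Sigma$ for $\mf h_{22}$ control this degeneration, and to rule out any other specialization producing a nontrivial kernel; this is precisely the kind of symbolic computation the paper systematically delegates to its SageMath notebooks.

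Once $s_1 = s_3 = 0$ has been identified as the degeneracy locus, verifying that $Y = e_3$ spans $\mf s_e$ is immediate: because $e_3$ is central, the first and third terms of \eqref{eq:6} vanish, leaving only the condition $g_\sigma([X, Z], e_3) = 0$ for all $X, Z \in \mf h_{22}$, i.e., $e_3$ must be $g_\sigma$-orthogonal to $[\mf h_{22}, \mf h_{22}] = \operatorname{span}(e_4, e_5, e_6)$. After taking into account the zero entries of $\sigma$ imposed by the definition of $\Sigma$, this orthogonality reduces to the two independent equations $s_1 = s_3 = 0$. The final assertion follows at once: both $e_3$ and $e_6$ are central in $\mf h_{22}$, so $Z(\mf h_{22})$ has dimension at least $2$, whereas $\mf s = \operatorname{span}(e_3)$ has rank $1$.
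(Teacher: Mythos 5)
Your proposal is correct and follows essentially the same route as the paper, which simply sets up the linear system \eqref{eq:6} over the parametrized family $\Sigma$ and delegates the resulting rank computation to a SageMath notebook. Your hand verification that $e_3\in\mf s$ forces $s_1s_2+s_3s_4=s_3s_5=0$ (hence $s_1=s_3=0$ since $s_2,s_5>0$) is sound, and the deferred bookkeeping is in fact light here: the zero-bracket pairs $(i,6)$ with $i=5,4,2,1$ already give $y_1=y_2=y_4=y_5=0$ for every $\sigma$, after which only the central directions $e_3,e_6$ remain and orthogonality to $[\mf h_{22},\mf h_{22}]$ kills $y_6$ and yields exactly the stated locus.
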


\begin{theorem}\label{sec:index-symmetry-csla-3}
  \it The left-invariant metric $g_\sigma = \sigma ^T \sigma$, $\sigma \in \Sigma$, on $H_{10}$ has nontrivial index of symmetry if and only if $s_{3} = \frac{s_{1} s_{4}}{s_{2}}$. In this case the index of symmetry is $1$, and the distribution of symmetry is generated by the left-invariant field
  $$Y = e_2 -\frac{s_{1}}{s_{2}}e_3 + \frac{s_{0}^{2}
    s_{4}}{s_{2}^{2} s_{5}}e_5 -\frac{s_{0}^{2} s_{2}^{2} +
    s_{0}^{2} s_{4}^{2}}{s_{2}^{2} s_{5}^{2}}e_6
  $$
  which does not belong to the center of $\mathfrak h_{10}$.
\end{theorem}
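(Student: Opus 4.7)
The plan is to apply equation \eqref{eq:6} directly: a left-invariant field $Y = \sum_{k=1}^{6} y_k e_k$ lies in the distribution of symmetry of $g_\sigma$ if and only if
\[
g_\sigma([e_i, Y], e_j) + g_\sigma([e_i, e_j], Y) + g_\sigma([Y, e_j], e_i) = 0 \qquad \text{for every } 1 \le i < j \le 6.
\]
First I would fix a generic $\sigma \in \Sigma$ for $\mathfrak h_{10}$ in the explicit form provided by Theorem~\ref{sec:moduli-space-left} (read off from Table~\ref{tab:sigmas-3step-no-h19}) and compute $g_\sigma = \sigma^T \sigma$ with entries that are polynomials in the free parameters $s_i$. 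Since the only nonzero structure brackets of $\mathfrak h_{10}$ are $[e_1,e_2] = -e_4$, $[e_1,e_3] = -e_5$ and $[e_1,e_4] = -e_6$, a large fraction of the fifteen instances of the Killing equation are trivially satisfied; the remainder form a linear system in $y_1, \ldots, y_6$ whose coefficients depend polynomially on the $s_i$.

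Next I would exploit the pairs $(i,j)$ that involve the central vectors $e_5$ or $e_6$. In these equations two of the three summands of \eqref{eq:6} vanish because $Z(\mathfrak h_{10}) = \operatorname{span}\{e_5, e_6\}$, and a short case analysis shows that the resulting conditions force $y_1 = y_4 = 0$. With these substitutions the remaining equations, coming from the pairs $(1,2), (1,3), (1,4), (2,3), (2,4)$ and $(3,4)$, reduce to a linear system in the four unknowns $y_2, y_3, y_5, y_6$. The critical algebraic step is to show that, after clearing positive diagonal factors, the compatibility determinant of this reduced system is proportional to $s_{2} s_{3} - s_{1} s_{4}$. Hence a nontrivial $Y$ exists precisely when $s_3 = s_1 s_4 / s_2$ (well defined because $s_2 > 0$), and in that case back-substitution yields the one-dimensional solution space generated by the explicit $Y$ stated in the theorem.

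Finally, since the $e_2$-coefficient of $Y$ equals $1$ and $e_2 \notin \operatorname{span}\{e_5, e_6\} = Z(\mathfrak h_{10})$, the generator $Y$ is not central and therefore the distribution $\mf s$ has rank $1$ but is not contained in the central distribution of $H_{10}$. The main obstacle is the bookkeeping required to extract the factor $s_2 s_3 - s_1 s_4$ from the determinant of the reduced system and to carry out the final back-substitution; the polynomial expressions in the $s_i$ are cumbersome to handle by hand, which motivates symbolic verification through the SageMath notebooks accompanying the paper.
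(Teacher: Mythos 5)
Your overall strategy is the same as the paper's: the paper's proof of this theorem is precisely ``solve the linear system coming from equation~\eqref{eq:6} for $g_\sigma$ with $\sigma$ read off from Table~\ref{tab:sigmas-3step-no-h19}'', with the bookkeeping delegated to the accompanying SageMath notebook. So there is no methodological divergence to report.

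There is, however, one concrete misstep in your outline. The equations coming from pairs involving the central vectors $e_5, e_6$ force $y_1 = 0$ (e.g.\ the pair $(3,5)$ gives $y_1\, g_\sigma(e_5,e_5) = 0$, and $(4,6)$ gives $y_1\, g_\sigma(e_6,e_6) = 0$), but they do \emph{not} force $y_4 = 0$. Once $y_1 = 0$, the only surviving central-pair conditions are those from $(1,5)$ and $(1,6)$, namely
\begin{equation*}
  y_2\, g_\sigma(e_4,e_5) + y_3\, g_\sigma(e_5,e_5) + y_4\, g_\sigma(e_5,e_6) = 0, \qquad
  y_2\, g_\sigma(e_4,e_6) + y_3\, g_\sigma(e_5,e_6) + y_4\, g_\sigma(e_6,e_6) = 0,
\end{equation*}
which for a generic $\sigma$ admit a one-parameter family of solutions with $y_4 \neq 0$. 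The vanishing of $y_4$ is a \emph{conclusion} of the full analysis: writing $u = y_2 e_4 + y_3 e_5 + y_4 e_6$ and $v = y_4 e_4 + y_5 e_5 + y_6 e_6$, the system says $u \perp \{e_5,e_6\}$, $v \perp \{e_4,e_5\}$ and $g_\sigma(u,e_4) + g_\sigma(v,e_6) = 0$; the compatibility condition for a nontrivial solution works out to $2 s_0 (s_1 s_4 - s_2 s_3) = 0$, and only when $s_2 s_3 = s_1 s_4$ do the generators of $\{e_5,e_6\}^\perp$ and $\{e_4,e_5\}^\perp$ (inside $\operatorname{span}\{e_4,e_5,e_6\}$) lose their $e_6$- and $e_4$-components, whence $y_4 = 0$. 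If you instead impose $y_4 = 0$ at the outset, you are analyzing a strictly smaller system and must separately rule out solutions with $y_4 \neq 0$; it happens that none exist here, but that requires the argument just sketched rather than an appeal to the central-pair equations. With that correction the rest of your plan (the $2\times 2$ minors $g_\sigma(e_4,e_5)g_\sigma(e_5,e_6) - g_\sigma(e_4,e_6)g_\sigma(e_5,e_5) = s_2 s_5 (s_1 s_4 - s_2 s_3)$, back-substitution, and the observation that $y_2 = 1$ puts $Y$ outside $Z(\mathfrak h_{10}) = \operatorname{span}\{e_5,e_6\}$) goes through and reproduces the stated generator.
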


\begin{proof}[{\proofname} of Theorems \ref{sec:index-symmetry-csla},
  \ref{sec:index-symmetry-csla-1}, \ref{sec:index-symmetry-csla-2} and
  \ref{sec:index-symmetry-csla-3}]
  It follows from very long, but straightforward, computations. See
  the attached notebook \cite[Notebook 07]{cardosoAuxiliarySageMathNotebooks2024}.
\end{proof}

The last two cases are more difficult to deal with, as the
computations are lengthy and also more involved. A case by case
analysis is needed to describe the general situation.

\begin{theorem}\label{sec:index-symmetry-csla-4}
  \it Let us consider the left-invariant metric $g_\sigma = \sigma^T
  \sigma$, $\sigma \in \Sigma$, on $H_{21}$. Then:
  \begin{enumerate}
  \item\label{item:thm-h21-1} if $s_2 \neq 0$, the metrics with
    nontrivial index of symmetry form an algebraic hypersurface of
    $\mathcal M(H_{21}) / {\sim_0}$. For all these metrics, we have
    $i_{\mathfrak s}(H_{21}, g_\sigma) = 1$ and
    $\mathfrak s \not\subset Z(\mf h_{21})$;
  \item\label{item:thm-h21-2} if $s_2=0, s_0\neq 0$ and $s_{6} = \frac{s_{3} s_{7}}{s_4}$, then
    $i_{\mathfrak s}(H_{21}, g_\sigma) = 1$ and
    $\mathfrak s \not\subset Z(\mf h_{21})$;
  \item\label{item:thm-h21-3} if
    $s_2=0, s_0 = 0$ and $s_6 \neq \frac{s_3 s_7}{s_4}$, the metrics with
    nontrivial index of symmetry form an algebraic submanifold of
    $\mathcal M(H_{21}) / {\sim_0}$ of codimension $2$. For these
    metrics we have $i_{\mathfrak s}(H_{21}, g_\sigma) = 2$ and
    $\mathfrak s \not\subset Z(\mf h_{21})$;
  \item\label{item:thm-h21-4} if $s_2=0, s_0 = 0, s_6 = \frac{s_3 s_7}{s_4}$
    and $s_5 \neq \frac{s_1^2 s_7}{s_4^2}$ then $i_{\mathfrak s}(H_{21}, g_\sigma) = 1$ and $\mathfrak s \subset \nobreak Z(\mf h_{21})$;
  \item\label{item:thm-h21-5} if $s_2=0, s_0 = 0, s_6 = \frac{s_3 s_7}{s_4}, 
    s_5 = \frac{s_1^2 s_7}{s_4^2}$ and $s_3 \neq 0$ then $i_{\mathfrak s}(H_{21}, g_\sigma) = 2$ and $\mathfrak s \not\subset Z(\mf h_{21})$;
  \item\label{item:thm-h21-6} if $s_2=0, s_0 = 0, s_6 = \frac{s_3 s_7}{s_4},   
    s_5 = \frac{s_1^2 s_7}{s_4^2}$ and $s_3 = 0$, then $i_{\mathfrak s}(H_{21},g_\sigma) = 3$ and $\mathfrak s \not\subset Z(\mf h_{21})$.\footnote{In this case condition $s_6 = \frac{s_3 s_7}{s_4}$ just means $s_6 = 0$.     We stated the theorem in this way in order to better visualize all the possible cases for $\sigma \in \Sigma$.}
  \item In all the remaining cases the we have $i_{\mf s}(H_{21}, g_\sigma) = 0$.
  \end{enumerate}
\end{theorem}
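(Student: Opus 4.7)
The plan is to turn the Killing condition \eqref{eq:6} on $Y\in\mf s$ into a homogeneous linear system in the coordinates of $Y=\sum y_i e_i$ whose coefficient matrix $M(\sigma)$ depends algebraically on the parameters $s_0,\ldots,s_7$ of $\sigma\in\Sigma$, and then to prove the theorem by stratifying $\Sigma$ according to the corank of $M(\sigma)$, which by construction equals $i_{\mf s}(H_{21},g_\sigma)$.

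First I would write down the generic $\sigma\in\Sigma$ for $\mf h_{21}=(0,0,0,12,14,15)$ in lower-triangular form following the recipe from the proof of Theorem~\ref{sec:moduli-space-left}, together with the induced inner product $g_\sigma=\sigma^T\sigma$. Since the only nontrivial brackets are $[e_1,e_2]=-e_4$, $[e_1,e_4]=-e_5$, $[e_1,e_5]=-e_6$, substituting $X=e_i$ and $Z=e_j$ for $1\le i<j\le 6$ in \eqref{eq:6} produces a small system of linear equations in $y_1,\ldots,y_6$ whose coefficients are polynomials in the $s_k$; these rows assemble into $M(\sigma)$. The next step is symbolic row reduction, tracking at each elimination step the pivot polynomial that appears and splitting the parameter space $\Sigma$ accordingly. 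The statement itself tells us what the sequence of pivots must be: the leading one is a positive multiple of $s_2$, the next involves $s_0$, and subsequent pivots are controlled by the expressions $s_4 s_6-s_3 s_7$, $s_4^2 s_5-s_1^2 s_7$ and $s_3$ --- exactly the polynomials whose vanishings distinguish cases \eqref{item:thm-h21-1}--\eqref{item:thm-h21-6}. On the branch $s_2\ne 0$, the remaining analysis reduces to one polynomial condition cutting out the hypersurface of case~\eqref{item:thm-h21-1}; on $s_2=0$ the next split is $s_0\ne 0$ (giving case~\eqref{item:thm-h21-2} under the linear relation $s_6=s_3 s_7/s_4$) versus $s_0=0$, from which cases~\eqref{item:thm-h21-3}--\eqref{item:thm-h21-6} branch by successive vanishing of the remaining pivot polynomials. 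At each terminal stratum I read off $\dim\ker M(\sigma)$ and write down an explicit basis of $\mf s$, then test its elements against $Z(\mf h_{21})=\operatorname{span}(e_3,e_6)$ to decide the $\mf s\subset Z$ versus $\mf s\not\subset Z$ assertion. The clause that $i_{\mf s}=0$ outside the enumerated strata is automatic since $M(\sigma)$ then attains its maximal rank.

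The main obstacle is organizational rather than conceptual: one must verify that the enumerated strata are mutually disjoint, that together with the complementary open set in clause~(7) they exhaust $\Sigma$, that each pivot polynomial does not vanish identically on the locus carved out by the previously imposed conditions (otherwise the corresponding subcase is empty or ill-defined), and that no further rank drop of $M(\sigma)$ occurs on a subvariety not covered by the listed conditions. This bookkeeping, together with the explicit determination of $\ker M(\sigma)$ in each case, is precisely what the authors delegate to SageMath; in practice one runs a symbolic Gauss elimination of $M(\sigma)$ with Gr\"obner-style case splits on the leading minors, and outputs at each leaf of the resulting decision tree both the dimension and a basis of $\mf s$, from which every assertion of the theorem follows by direct inspection.
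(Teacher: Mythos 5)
Your proposal is correct and follows essentially the same route as the paper: the authors also reduce membership in $\mf s$ to the linear system obtained from \eqref{eq:6} with $X=e_i$, $Z=e_j$, and then determine the corank of the resulting parameter-dependent matrix by a symbolic case-split computation delegated to SageMath (Notebook 08), with the explicit generators of $\mf s$ recorded in the remark following the theorem. The stratification by the pivot polynomials $s_2$, $s_0$, $s_4s_6-s_3s_7$, $s_4^2s_5-s_1^2s_7$ and $s_3$ that you describe is exactly the case structure of the statement, so there is no substantive difference in method.
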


\begin{proof}
  See the attached notebook \cite[Notebook 08]{cardosoAuxiliarySageMathNotebooks2024} for the supporting computations and the following remark for a precise description of the distribution of
  symmetry.
\end{proof}

\begin{remark}
  We keep the hypothesis from Theorem \ref{sec:index-symmetry-csla-4}.
  \begin{enumerate}
  \item If $s_2 \neq 0$, the algebraic hypersurface of $\Sigma$ such that
    the associated left-invariant metrics have non-trivial index of
    symmetry is defined by
    \begin{align*}
      s_{0} s_{2} s_{4}^{2} s_{5} - s_{0}^{2} s_{1} s_{4} s_{6} - s_{0} s_{2} s_{3} s_{4} s_{6} - s_{0} s_{1}^{2} s_{2} s_{7} + s_{0}^{2} s_{1} s_{3} s_{7} - s_{1} s_{2}^{2} s_{3} s_{7} + s_{0} s_{2} s_{3}^{2} s_{7} &= 0.
    \end{align*}
    In order to describe a left-invariant field $Y$ generating
    $\mathfrak s$ we consider the case $s_0 \neq 0$, where
    \begin{align*}
      Y & = e_2 - \frac{s_{0}^{2} s_{1} s_{4} s_{6} - {\left(s_{0}^{2} s_{1} + s_{1} s_{2}^{2}\right)} s_{3} s_{7}}{s_{0} s_{2}^{2} s_{4} s_{7}} e_3 - \frac{s_{0} s_{1} + s_{2} s_{3}}{s_{2} s_{4}} e_4 -\frac{s_{0} s_{1}^{2} s_{2} - s_{0} s_{2} s_{3}^{2} - {\left(s_{0}^{2} s_{1} - s_{1} s_{2}^{2}\right)} s_{3}}{s_{0} s_{2} s_{4}^{2}} e_5 \\ 
      & \hspace{30pc} + \alpha e_6 \notin Z(\mf h_{21}) 
    \end{align*}
    where
    \begin{align*}
      \alpha & = \frac{s_{0} s_{1}^{3}}{s_{2} s_{4}^{3}} + \frac{2 \, s_{1}^{2} s_{3}}{s_{4}^{3}} - \frac{s_{0}^{2} s_{1}^{2} s_{3}}{s_{2}^{2} s_{4}^{3}} - \frac{2 \, s_{0} s_{1} s_{3}^{2}}{s_{2} s_{4}^{3}} + \frac{s_{1} s_{2} s_{3}^{2}}{s_{0} s_{4}^{3}} - \frac{s_{3}^{3}}{s_{4}^{3}} + \frac{s_{0} s_{1} s_{4}}{s_{2} s_{7}^{2}} + \frac{s_{1}^{2} s_{6}}{s_{4}^{2} s_{7}} + \frac{s_{0}^{2} s_{1}^{2} s_{6}}{s_{2}^{2} s_{4}^{2} s_{7}} + \frac{s_{0} s_{1} s_{3} s_{6}}{s_{2} s_{4}^{2} s_{7}} \\ 
      & \hspace{31pc} + \frac{s_{1} s_{2} s_{3} s_{6}}{s_{0} s_{4}^{2} s_{7}};
    \end{align*}
    and the case $s_0 = 0$, where
    \begin{equation*}
      Y = e_2 + \frac{s_{4}^{2} s_{5} - s_{1}^{2} s_{7}}{s_{2} s_{4}
        s_{7}} e_3 - \frac{s_{5}}{s_{7}} e_5 + \frac{s_{5} s_{6}}{s_{7}^{2}} e_6 \notin Z(\mf h_{21}).
    \end{equation*}
    
  \item If $s_2=0, s_0\neq 0, s_{6} = \frac{s_{3} s_{7}}{s_4}$, then $\mf s$ is generated by
    \begin{equation*}
      Y = \frac{s_{4}^{2} s_{5} - s_{1}^{2} s_{7}}{s_{0} s_{1} s_{7}} e_3 + e_4 - \frac{s_{3}}{s_{4}} e_5 - \frac{s_{4}^{4} + s_{4}^{2} s_{5} s_{7} - s_{3}^{2} s_{7}^{2}}{s_{4}^{2} s_{7}^{2}} e_6 \notin Z(\mf h_{21}).
    \end{equation*}
  \item When $s_2=0$, $s_0 = 0$ and $s_6 \neq \frac{s_3 s_7}{s_4}$,
    the submanifold of $\Sigma$ associated to the metrics with
    nontrivial index of symmetry is described by the equation
    \begin{align*}
      s_{4}^{4} s_{5}^{2} - 2 \, s_{3} s_{4}^{3} s_{5} s_{6} + s_{3}^{2} s_{4}^{2} s_{6}^{2} + {\left(s_{1}^{4} + 2 \, s_{1}^{2} s_{3}^{2} + s_{3}^{4}\right)} s_{7}^{2} - 2 \, {\left({\left(s_{1}^{2} - s_{3}^{2}\right)} s_{4}^{2} s_{5} + {\left(s_{1}^{2} s_{3} + s_{3}^{3}\right)} s_{4} s_{6}\right)} s_{7}  = 0
    \end{align*}
    then $\mf s$ is generated by $Y_1 = e_3 \in Z(\mf h_{21})$ and
    \begin{align*}
      Y_2 & = e_2 - \frac{s_{4}^{2} s_{5} + s_{3} s_{4} s_{6} - {\left(s_{1}^{2} + s_{3}^{2}\right)} s_{7}}{2 \, {\left(s_{4}^{2} s_{6} - s_{3} s_{4} s_{7}\right)}} e_4 - \frac{s_{4}^{2} s_{5} s_{6} - s_{3} s_{4} s_{6}^{2} - {\left(2 \, s_{3} s_{4} s_{5} - {\left(s_{1}^{2} + s_{3}^{2}\right)} s_{6}\right)} s_{7}}{2 \, {\left(s_{4}^{2} s_{6} s_{7} - s_{3} s_{4} s_{7}^{2}\right)}} e_5 \\ 
      & \hspace{29pc} + \alpha e_6 \notin Z(\mf h_{21})
    \end{align*}
    where
    \begin{align*}
      \alpha & = \frac{s_{4}^{4} s_{5} - s_{3} s_{4}^{3} s_{6} + s_{4}^{2} s_{5} s_{6}^{2} - s_{3} s_{4} s_{6}^{3} - {\left(s_{1}^{2} + s_{3}^{2}\right)} s_{5} s_{7}^{2} + {\left(s_{4}^{2} s_{5}^{2} - s_{3} s_{4} s_{5} s_{6} - {\left(s_{1}^{2} - s_{3}^{2}\right)} s_{4}^{2} + {\left(s_{1}^{2} + s_{3}^{2}\right)} s_{6}^{2}\right)} s_{7}}{2 \, s_{4}^{2} s_{6} s_{7}^{2} - 2 \, s_{3} s_{4} s_{7}^{3}} .
    \end{align*}
    
  \item If $s_2=0$, $s_0 = 0$, $s_6 = \frac{s_3 s_7}{s_4}$ and
    $s_5 \neq \frac{s_1^2 s_7}{s_4^2}$ then $\mf s$ is generated by
    $e_3 \in Z(\mf h_{21})$.
  \item If $s_2=0$, $s_0 = 0$, $s_6 = \frac{s_3 s_7}{s_4}$,
    $s_5 = \frac{s_1^2 s_7}{s_4^2}$ and $s_3 \neq 0$, then
    $\mathfrak s$ is generated by $Y_1 = e_3 \in Z(\mf h_{21})$ and
    \begin{equation*}
      Y_2 = e_4 - \frac{s_{3}}{s_{4}} e_5 - \frac{s_{4}^{4} + {\left(s_{1}^{2} - s_{3}^{2}\right)} s_{7}^{2}}{s_{4}^{2} s_{7}^{2}} e_6 \notin Z(\mf h_{21}).
    \end{equation*}
  \item If
    $s_2=0$, $s_0 = 0$, $s_6 = \frac{s_3 s_7}{s_4}$, $s_5 = \frac{s_1^2
      s_7}{s_4^2}$ and $s_3 = 0$, then $\mf s$ is generated by
    \begin{align*}
      Y_1 = e_{2} - \tfrac{s_{1}^{2}}{s_{4}^{2}}e_5 \notin Z(\mf h_{21}), && Y_2 = e_{3} \in Z(\mf h_{21}), && Y_3 = e_{4} - \frac{{\left(s_{4}^{4} + s_{1}^{2} s_{7}^{2}\right)}}{s_{4}^{2} s_{7}^{2}}e_6 \notin Z(\mf h_{21}).
    \end{align*}
  \end{enumerate}
\end{remark}

\begin{theorem}\label{sec:index-symmetry-csla-5}
  \it Let us consider the left-invariant metric $g_\sigma = \sigma^T
  \sigma$, $\sigma \in \Sigma$, on $H_{28}$ and let us define
  \begin{align*}
    A = 
    \begin{pmatrix}
      a & b & \alpha \\
      c & \beta & b \\
      \gamma & c & a
    \end{pmatrix} && \text{where} &&
                                   \begin{matrix*}[l]
                                     a = s_{5}^{2} s_{6} + s_{3} s_{5} s_{8} - {\left(s_{1} s_{2} + s_{3} s_{4}\right)} s_{9}, \\
                                     b = s_{5}^{2} s_{7} + s_{4} s_{5} s_{8} - {\left(s_{2}^{2} + s_{4}^{2} + s_{3} s_{5}\right)} s_{9}, \\
                                     c = s_{4} s_{5} s_{6} + s_{3} s_{5} s_{7} - {\left(s_{0}^{2} + s_{1}^{2} + s_{3}^{2}\right)} s_{9}, \\
                                     \alpha = 2 \, s_{5}^{2} s_{8} - 2 \, s_{4} s_{5} s_{9}, \\
                                     \beta = 2 \, s_{4} s_{5} s_{7} - 2 \, {\left(s_{1} s_{2} + s_{3} s_{4}\right)} s_{9}, \\
                                     \gamma = 2 \, s_{3} s_{5} s_{6}.
                                   \end{matrix*}
  \end{align*}
  Then
  \begin{equation*}
    i_{\mathfrak s}(H_{28}, g_\sigma)  = 3 - \rank A.
  \end{equation*}
\end{theorem}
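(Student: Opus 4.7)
My plan is to reduce the linear system cutting out $\mf s$ to a $3 \times 3$ system and identify its matrix with $A$. First, I apply the Killing condition (\ref{eq:6}). In the standard basis of $\mf h_{28}$ the only non-trivial brackets are $[e_1, e_k] = -e_{k+1}$ for $k = 2, 3, 4, 5$, so $e_6$ spans the center. Setting $(X, Z) = (e_5, e_6)$ in (\ref{eq:6}) yields $y_1 g_{66} = 0$, which forces $y_1 = 0$. Once $y_1 = 0$, every bracket $\ad_{e_k} Y$ for $k \ge 2$ vanishes and $[e_k, e_l] = 0$ for $k, l \ge 2$, so every instance of (\ref{eq:6}) with both indices in $\mathrm{span}\{e_2, \ldots, e_6\}$ becomes trivial. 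Only the five equations from $(X, Z) = (e_1, e_j)$, $j = 2, \ldots, 6$, remain, producing a symmetric $5 \times 5$ system $M \tilde Y = 0$ with $\tilde Y = (y_2, y_3, y_4, y_5, y_6)^T$.

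Second, I would identify a fixed full-rank block of $M$ and quotient by it via a Schur complement. A direct check shows that the $2 \times 2$ submatrix of $M$ indexed by $y_5, y_6$ equals $\left(\begin{smallmatrix} -2 g_{56} & -g_{66} \\ -g_{66} & 0 \end{smallmatrix}\right)$, with determinant $-g_{66}^2 \neq 0$. Writing $M = \left(\begin{smallmatrix} P & Q \\ Q^T & R \end{smallmatrix}\right)$ with $R$ this invertible block, the Schur complement identity gives $\rank M = 2 + \rank S$ for $S = P - Q R^{-1} Q^T$. Concretely, $S$ is the $3 \times 3$ matrix of the equations indexed by $(e_1, e_2), (e_1, e_3), (e_1, e_4)$ after eliminating $y_5$ and $y_6$ using the equations from $(e_1, e_6)$ and $(e_1, e_5)$. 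Hence $i_{\mf s}(H_{28}, g_\sigma) = \dim \ker M = 5 - \rank M = 3 - \rank S$, and the statement reduces to the identity $\rank S = \rank A$.

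The final step is to substitute the parametrisation $g_{ij} = (\sigma^T \sigma)_{ij}$ of the metric in terms of the coordinates $s_0, \ldots, s_9$ on $\Sigma$ from Table~\ref{tab:sigmas-5step}, clear the common factor $g_{66}^2$, and check that $g_{66}^2 S$ simplifies to a nonzero scalar multiple of $A$. This is the main obstacle: expanding the Schur complement produces polynomials of high degree in the $s_i$, and the cancellations that collapse them into the compact expressions $a, b, c, \alpha, \beta, \gamma$ are non-obvious. In line with the preceding theorems of this section, this identification is most efficiently verified by symbolic computation, and the corresponding check would be included in the accompanying SageMath notebook.
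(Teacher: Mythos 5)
Your reduction is correct and in fact supplies more mathematical structure than the paper, whose proof of this theorem consists entirely of a pointer to the supporting SageMath notebook: the step $y_1 = 0$ obtained from the pair $(e_5, e_6)$, the observation that all remaining instances of \eqref{eq:6} collapse to the five equations indexed by $(e_1, e_j)$, and the Schur-complement bookkeeping $\rank M = 2 + \rank S$ are all valid, and together they explain \emph{why} the answer takes the form $3 - \rank(\cdot)$ for a $3 \times 3$ matrix. One detail in your final step is off, although it does not affect the conclusion: $S$ is the Schur complement of a principal block of the symmetric matrix $M$, hence $S$ is symmetric, whereas $A$ is only persymmetric (generically $b \neq c$), so $g_{66}^2 S$ cannot literally be a scalar multiple of $A$. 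What one finds on substituting $g = \sigma^T \sigma$ is that $-s_9 S$ equals $A$ with its rows reversed, i.e.\ $-s_9 S = JA$ for the flip permutation $J$; for instance the $(1,1)$, $(1,2)$, $(1,3)$ entries of $g_{66}^2 S$ simplify to $-s_9^3 \gamma$, $-s_9^3 c$, $-s_9^3 a$, which is the third row of $A$, and the $(2,2)$ entry to $-s_9^3 \beta$. Since $J$ is invertible this still yields $\rank S = \rank A$ and hence $i_{\mf s}(H_{28}, g_\sigma) = 3 - \rank A$, so your argument goes through once the final identification is stated as proportionality to $JA$ (or simply as the rank equality $\rank S = \rank A$) rather than to $A$ itself.
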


\begin{proof}
  See the attached notebook \cite[Notebook 09]{cardosoAuxiliarySageMathNotebooks2024}.
\end{proof}

\begin{remark}
  The matrix $A$ in Theorem \ref{sec:index-symmetry-csla-5} is a
  persymmetric matrix (i.e., it is symmetric with respect to its
  anti-diagonal). However this property has not a geometric or
  algebraic meaning as its form depends heavily on the basis of choice
  and even in the given order of the algorithm solving the equations
  for metrics with non-trivial index of symmetry.  We present in Table~\ref{tab:h28-examples-index} examples of different metrics
  $g_\sigma$, $\sigma \in \Sigma$, for which $A$ has all the possible
  ranks. Notice that in these particular examples,
  $i_{\mf s}(H_{28}, g_\sigma) > 0$ implies $\mf s \not\subset Z(\mf
  h_{28})$. Moreover, in the following result we prove that non-trivial
  distributions of symmetry are never central.
\end{remark}

\begin{table}
  \caption{Examples of left-invariant metrics on $H_{28}$ with all the
    possible indexes of symmetry.}
  \centering
  \(    
  \begin{array}{|c|c|c|l|}
    \hline
    \sigma & g_\sigma & A & \mf s \\    
    \hline\hline 
    \begin{psmallmatrix*}[r]
      1 & 0 & 0 & 0 & 0 & 0 \\
      0 & 1 & 0 & 0 & 0 & 0 \\
      0 & 0 & 1 & 0 & 0 & 0 \\
      0 & 0 & 0 & 1 & 0 & 0 \\
      0 & 0 & 0 & 0 & 1 & 0 \\
      0 & 0 & 0 & 0 & \frac{1}{2} & 1
    \end{psmallmatrix*}
    &
      \begin{psmallmatrix*}[r]
        1 & 0 & 0 & 0 & 0 & 0 \\
        0 & 1 & 0 & 0 & 0 & 0 \\
        0 & 0 & 1 & 0 & 0 & 0 \\
        0 & 0 & 0 & 1 & 0 & 0 \\
        0 & 0 & 0 & 0 & \frac{5}{4} & \frac{1}{2} \\
        0 & 0 & 0 & 0 & \frac{1}{2} & 1
      \end{psmallmatrix*}
    &
      \begin{psmallmatrix*}[r]
        0 & -1 & 1 \\
        -1 & 0 & -1 \\
        0 & -1 & 0
      \end{psmallmatrix*}
    & \parbox[m][4.5pc][c]{1pc}{$0$} \\ \hline
    \begin{psmallmatrix*}[r]
      1 & 0 & 0 & 0 & 0 & 0 \\
      0 & 1 & 0 & 0 & 0 & 0 \\
      0 & 0 & 1 & 0 & 0 & 0 \\
      0 & 0 & 0 & 1 & 0 & 0 \\
      0 & 0 & 0 & 0 & 1 & 0 \\
      0 & 0 & 0 & 0 & 0 & 1
    \end{psmallmatrix*}
    &
      \begin{psmallmatrix*}[r]
        1 & 0 & 0 & 0 & 0 & 0 \\
        0 & 1 & 0 & 0 & 0 & 0 \\
        0 & 0 & 1 & 0 & 0 & 0 \\
        0 & 0 & 0 & 1 & 0 & 0 \\
        0 & 0 & 0 & 0 & 1 & 0\\
        0 & 0 & 0 & 0 & 0 & 1
      \end{psmallmatrix*}
    &
      \begin{psmallmatrix*}[r]
        0 & -1 & 0 \\
        -1 & 0 & -1 \\
        0 & -1 & 0
      \end{psmallmatrix*}
    & \parbox[m][3.7pc][c]{5pc}{$\langle e_2 - e_4 + e_6 \rangle$} \\ \hline
    \begin{psmallmatrix*}[r]
      1 & 0 & 0 & 0 & 0 & 0 \\
      0 & 1 & 0 & 0 & 0 & 0 \\
      0 & 0 & 1 & 0 & 0 & 0 \\
      0 & 0 & 0 & 1 & 0 & 0 \\
      0 & 0 & 2 & 1 & 2 & 0 \\
      0 & 0 & 0 & \frac{5}{4} & \frac{1}{2} & 1
    \end{psmallmatrix*}
    &
      \begin{psmallmatrix*}[r]
        1 & 0 & 0 & 0 & 0 & 0 \\
        0 & 1 & 0 & 0 & 0 & 0 \\
        0 & 0 & 5 & 2 & 4 & 0 \\
        0 & 0 & 2 & \frac{57}{16} & \frac{21}{8} & \frac{5}{4} \\
        0 & 0 & 4 & \frac{21}{8} & \frac{17}{4} & \frac{1}{2} \\
        0 & 0 & 0 & \frac{5}{4} & \frac{1}{2} & 1
      \end{psmallmatrix*}
    &
      \begin{psmallmatrix*}[r]
        0 & 0 & 0 \\
        0 & 1 & 0 \\
        0 & 0 & 0  
      \end{psmallmatrix*}
    & \parbox[m][5.1pc][c]{9pc}{$\langle e_2 - 4 e_6, e_4 - \frac{1}{2} e_5 - 5 e_6\rangle$} \\ \hline
    \begin{psmallmatrix*}[r]
      1 & 0 & 0 & 0 & 0 & 0 \\
      0 & 1 & 0 & 0 & 0 & 0 \\
      0 & 0 & 1 & 0 & 0 & 0 \\
      0 & 0 & 0 & 1 & 0 & 0 \\
      0 & 0 & 1 & 0 & 1 & 0 \\
      0 & 0 & 0 & 2 & 0 & 1
    \end{psmallmatrix*}
    &
      \begin{psmallmatrix*}[r]
        1 & 0 & 0 & 0 & 0 & 0 \\
        0 & 1 & 0 & 0 & 0 & 0 \\
        0 & 0 & 2 & 0 & 1 & 0 \\
        0 & 0 & 0 & 5 & 0 & 2 \\
        0 & 0 & 1 & 0 & 1 & 0 \\
        0 & 0 & 0 & 2 & 0 & 1
      \end{psmallmatrix*}
    &
      \begin{psmallmatrix*}[r]
        0 & 0 & 0 \\
        0 & 0 & 0 \\
        0 & 0 & 0
      \end{psmallmatrix*}
    &
      \parbox[m][3.6pc][c]{9pc}{$\langle e_2 - e_6, e_3 - 2 e_5, e_4 - 3 e_6 \rangle$}
    \\
    \hline
  \end{array}
  \)
  \label{tab:h28-examples-index}
\end{table}

\begin{proposition}
  \it For any left-invariant metric on $H_{28}$ we have that
  \begin{equation*}
    Z(\mathfrak h_{28}) \cap \mf s = \{0\}.
  \end{equation*}
\end{proposition}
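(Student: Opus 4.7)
The plan is to exploit the characterization \eqref{eq:6} of $\mf s$ together with the fact that $Z(\mf h_{28})$ is one-dimensional. First I would locate the center: in the standard basis $e_1, \ldots, e_6$ of $\mf h_{28} = (0,0,12,13,14,15)$ the only nonzero brackets are $[e_1, e_j] = -e_{j+1}$ for $j = 2, 3, 4, 5$. Every basis vector except $e_6$ fails to commute with $e_1$, so $Z(\mf h_{28}) = \bb R e_6$. Hence it is enough to prove that $e_6 \notin \mf s$ for any left-invariant metric $g$ on $H_{28}$.

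For the main step, I would substitute $Y = e_6$ into the characterizing equation \eqref{eq:6}. Since $e_6$ is central, $[X, Y] = [Y, Z] = 0$ for all $X, Z$, and the equation collapses to
\begin{equation*}
  g([X, Z], e_6) = 0 \qquad \text{for all } X, Z \in \mf h_{28}.
\end{equation*}
Specializing to $X = e_1$ and $Z = e_5$ gives $[e_1, e_5] = -e_6$, and therefore $-g(e_6, e_6) = 0$. This contradicts the positive-definiteness of $g$. Consequently, no nonzero multiple of $e_6$ can belong to $\mf s$, and since $Z(\mf h_{28}) = \bb R e_6$ we conclude $Z(\mf h_{28}) \cap \mf s = \{0\}$.

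There is essentially no obstacle here. The underlying mechanism is structural: any central element $Y$ that also lies in the image of the Lie bracket automatically violates \eqref{eq:6}, since that equation forces $Y$ to be $g$-orthogonal to $[\mf h, \mf h]$ while simultaneously belonging to $[\mf h, \mf h]$. For $\mf h_{28}$ this mechanism applies directly because $Z(\mf h_{28}) = \bb R e_6 \subset [\mf h_{28}, \mf h_{28}]$, and the proof is therefore just the one-line verification above.
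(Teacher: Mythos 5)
Your proof is correct and follows essentially the same route as the paper: the paper also identifies $Z(\mf h_{28}) = \bb R e_6$, plugs $Y = e_6$ into \eqref{eq:6} with $X = e_1$, $Z = e_5$, and derives the contradiction $0 = g([e_1,e_5],e_6) = -g(e_6,e_6)$ (written there as $-s_9^2$ for $g = g_\sigma$). Your closing observation that the mechanism applies to any central element lying in $[\mf h, \mf h]$ is a correct and slightly more general way of packaging the same computation.
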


\begin{proof}
  Notice that $Z(\mathfrak h_{28}) = \mathbb R e_6$ and assume that
  $e_6 \in \mathfrak s$ where $\mathfrak s$ is the distribution of
  symmetry of some $g_\sigma$, $\sigma \in \Sigma$. Then from
  \eqref{eq:6} we have that
  \begin{equation*}
    0 = g_\sigma([e_1, e_6], e_5) + g_\sigma([e_1, e_5], e_6) +
    g_\sigma([e_6, e_5], e_1) = -s_9^2
  \end{equation*}
  which is a contradiction since $g_\sigma$ is non-degenerate. 
\end{proof}

\section{Application to nilsoliton metrics on CSLAT}
\label{sec:application-to-nilsolitons}

Let $H$ be a nilpotent simply connected Lie group endowed with a left-invariant metric $g$. Let us denote by $\mathfrak h$ the Lie algebra of $H$ and with the same symbol $g$ the inner product on $\mf h$ induced by the metric. We say that $g$ is a \emph{nilsoliton} metric if
\begin{equation} \label{eq:nilsoliton}
  \Ric_g = c  \id_{\mf h} + D
\end{equation}
for some $c \in \mathbb R$ and $D \in \Der(\mf h)$, where $\Ric_g$ is the Ricci operator of $g$ (at the identity element of $H$). Recall that a nilsoliton metric on $H$ is unique up to scaling. The classification of nilsoliton metrics is known in low dimensions. In particular the classification in dimension $6$ is obtained in  \cite{willRankoneEinsteinSolvmanifolds2003} (see also \cite{willSpaceSolvsolitonsLow2011}). Since nilsoliton metrics are Ricci soliton metrics, they have distinguished geometric properties, so it is expected that these metrics possess non-trivial index of symmetry. In the next result we prove that this is indeed the case for CSLATs of dimension $6$. 

\begin{theorem}\label{sec:application-to-nilsolitons-1}
  \it Let $\mf h$ be a CSLAT of dimension $6$ and let $H$ be the simply connected Lie group with Lie algebra $\mf h$. Assume that $H$ admits a left-invariant metric with nontrivial index of symmetry. If $g$ is a nilsoliton metric on $H$, then $i_{\mf s}(H, g) > 0$.
\end{theorem}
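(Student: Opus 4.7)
The plan is to reduce the statement to a finite check, one case per isomorphism class of CSLAT admitting a metric with positive index of symmetry. By Theorem \ref{sec:index-symmetry-csla}, such Lie algebras are exactly $\mf h_9, \mf h_{10}, \mf h_{21}, \mf h_{22}$ and $\mf h_{28}$, so the theorem becomes a verification for five concrete Lie groups. For each of them, a nilsoliton metric exists and is unique up to scaling by the results of \cite{willRankoneEinsteinSolvmanifolds2003}, so it suffices to exhibit one nilsoliton metric on the nose and check that it satisfies the conditions from Theorems \ref{sec:index-symmetry-csla-1}--\ref{sec:index-symmetry-csla-5} which guarantee $i_{\mf s}(H,g) > 0$.

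The first step is to import Will's classification and translate each nilsoliton metric to the standard basis used throughout this paper (recall that the standard basis of $\mf h_9, \mf h_{18}, \mf h_{19}^+, \mf h_{26}^-$ was modified in the proof of Proposition \ref{sec:char-solv-lie}). After such a change of basis, every left-invariant metric can be conjugated by an element of $\Aut_0(\mf h)$ to a metric of the form $g_\sigma = \sigma^T \sigma$ with $\sigma \in \Sigma$, using Theorem \ref{sec:moduli-space-left}; this amounts to running the triangular Cholesky-like parametrization of $\mathcal M(H)/{\sim_0}$ on Will's metric. Since a nilsoliton metric is characterized by equation \eqref{eq:nilsoliton}, and since \eqref{eq:nilsoliton} is invariant under automorphisms of $\mf h$, any representative of the $\Aut_0(\mf h)$-orbit is again nilsoliton, so we may indeed work with its normal form $\sigma \in \Sigma$.

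The second step is the case-by-case verification. For $\mf h_9$ and $\mf h_{22}$, after normalizing one reads off the values of the free parameters $s_i$ of $\sigma$ and checks directly that $s_2 = 0$ (resp.\ $s_1 = s_3 = 0$), so Theorem \ref{sec:index-symmetry-csla-1} (resp.\ Theorem \ref{sec:index-symmetry-csla-2}) gives $i_{\mf s} = 1$. For $\mf h_{10}$ one checks the polynomial identity $s_3 = s_1 s_4/s_2$ from Theorem \ref{sec:index-symmetry-csla-3}. For $\mf h_{21}$ the nilsoliton should land in one of the six branches of Theorem~\ref{sec:index-symmetry-csla-4}; identifying which one requires evaluating the defining algebraic equation of each branch and checking which vanishes on the nilsoliton parameters. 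Finally, for $\mf h_{28}$ we form the $3 \times 3$ matrix $A$ from Theorem \ref{sec:index-symmetry-csla-5} with the nilsoliton values of $s_0, \ldots, s_9$ and verify that $\rank A < 3$, equivalently $\det A = 0$; this is a polynomial identity in the nilsoliton coefficients.

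The main obstacle is of a bookkeeping nature rather than a conceptual one: it lies in carrying Will's nilsoliton metrics through the change of basis of Proposition \ref{sec:char-solv-lie} and then through the Cholesky-type parametrization of $\Sigma$ from Section \ref{sec:moduli-space-left-1}, so that the nilsoliton is presented as an explicit $\sigma \in \Sigma$ whose entries can be substituted into the (often lengthy) defining equations of Theorems \ref{sec:index-symmetry-csla-3}, \ref{sec:index-symmetry-csla-4} and \ref{sec:index-symmetry-csla-5}. All of these checks are routine polynomial manipulations once the metric is in normal form, and they are naturally delegated to the accompanying SageMath notebooks, with a brief table recording for each of $\mf h_9, \mf h_{10}, \mf h_{21}, \mf h_{22}, \mf h_{28}$ the nilsoliton $\sigma$ and the resulting value of $i_{\mf s}(H,g)$.
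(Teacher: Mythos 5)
Your proposal is correct, but it takes the route that the paper deliberately avoids for the key computational step. Both arguments agree on the skeleton: reduce via Theorem~\ref{sec:index-symmetry-csla} to the five algebras $\mf h_9, \mf h_{10}, \mf h_{21}, \mf h_{22}, \mf h_{28}$, present the nilsoliton as some $\sigma \in \Sigma$, and then read off positivity of the index from Theorems \ref{sec:index-symmetry-csla-1}--\ref{sec:index-symmetry-csla-5}. Where you differ is in how the nilsoliton lands in $\Sigma$: you import Will's classification and push each metric through the change of basis of Proposition~\ref{sec:char-solv-lie} and the Cholesky-type normalization of Theorem~\ref{sec:moduli-space-left}, whereas the paper solves equation \eqref{eq:nilsoliton} directly inside $\Sigma$. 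The paper's shortcut is the observation that, since $\Ric_g$ is $g$-symmetric and every derivation is triangular in the standard basis, the nilsoliton equation forces $\Ric_g$ and $D$ to be diagonal, so one may assume $\sigma$ diagonal from the outset (justified a posteriori by uniqueness up to scaling); this collapses the computation to a small algebraic system and yields explicit diagonal families $g_9, \ldots, g_{28}$, for which the conditions of the index theorems (e.g.\ $s_2=0$ for $\mf h_9$, $s_1=s_3=0$ for $\mf h_{22}$, $\rank A < 3$ for $\mf h_{28}$) are immediate. Your route buys independence from solving the Ricci equation but pays for it with exactly the basis-matching bookkeeping the paper flags as potentially difficult; the paper in fact carries out that matching only afterwards, in a remark, as a consistency check. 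One point worth making explicit in your write-up: your case~$\mf h_{21}$ falls into branch \eqref{item:thm-h21-4} of Theorem~\ref{sec:index-symmetry-csla-4} (since the diagonal normal form has $s_0 = s_2 = s_3 = s_5 = s_6 = 0$ and $s_5 = 0 \neq s_1^2 s_7/s_4^2$), so the branch identification you defer to computation is actually immediate once one knows the nilsoliton is diagonal --- which is another argument in favor of establishing diagonality first.
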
 

\begin{proof}
  From Theorem \ref{sec:index-symmetry-csla} we know that $\mf h \in \{\mf h_9, \mf h_{10}, \mf h_{21}, \mf h_{22}, \mf h_{28}\}$. While the classification of 6-dimensional nilsolitons exists, finding an explicit isometry between a nilsoliton in this classifications and a metric from Section \ref{sec:moduli-space-left-1} might be difficult. In fact, it is easier to solve equation \ref{eq:nilsoliton} directly in $\Sigma \simeq \mathcal{M}(\mathfrak h) / {\sim_0}$. In order to compute the Ricci operator we will use the well-known formula 
  \begin{equation*}
    g(\Ric_g \tilde e_j, \tilde e_h) = \frac12\sum_{i,k} c_{iki} (c_{kjh} + c_{khj}) + \frac12 c_{ikh} c_{ikj} - c_{ijk} c_{khi} + c_{iki} c_{jhk} - c_{ijk} c_{ihk} 
  \end{equation*}
  where $\tilde e_1, \ldots, \tilde e_6$ is an orthonormal basis and
  \begin{equation*}
    c_{ijk} = g([\tilde e_i, \tilde e_j], \tilde e_k ).
  \end{equation*} 
  Assume that $g = g_\sigma$ for some $\sigma \in \Sigma$ and that $\tilde e_1, \ldots, \tilde e_6$ are obtained from the standard basis $e_1, \ldots, e_6$ via the Gram-Schmidt process. Since the Ricci operator is symmetric and any derivation has triangular form in the standard basis, the equation $\Ric_g = c \id_{\mf h} + D$, for $D \in \Der(\mf h)$ implies that $\Ric_g$ and $D$ are diagonal in the standard basis. So we can assume further that $\sigma$, and hence $g_\sigma$ are diagonal matrices (we will verify this fact a posteriori by solving the nilsoliton equation with $\sigma$ diagonal, since the nilsoliton metric is unique up to scaling \cite{lauretRicciSolitonSolvmanifolds2011}). In this case one has
  \begin{equation*}
    g(\Ric_g \tilde e_j, \tilde e_h) = g(\Ric_g e_j, e_h).
  \end{equation*}

  With these simplifications it is not hard to solve equation \ref{eq:nilsoliton} (see the attached notebook \cite[Notebook 10]{cardosoAuxiliarySageMathNotebooks2024}) and we get that the nilsoliton metrics $g_i$ on $\mf h_i$ are given by
  \begin{align*}
    g_9 & = \diag(1, 1, 2r, 1, r, r^2), \\ 
    g_{10} & = \diag(1, 1, 1, r, \tfrac{1}{2} \, r, r^{2}), \\ 
    g_{21} & = \diag(1, 1, 1, \tfrac{1}{2} \, 3^{\frac{1}{3}} 2^{\frac{1}{3}} r^{\frac{2}{3}}, \tfrac{1}{3} \, 3^{\frac{2}{3}} 2^{\frac{2}{3}} r^{\frac{4}{3}}, r^{2}), \\ 
    g_{22} & = \diag(1, \tfrac{1}{2} \, 3^{\frac{1}{4}} 2^{\frac{3}{4}} \sqrt{r}, 1, \tfrac{1}{2} \, \sqrt{3} \sqrt{2} r, \tfrac{1}{2} \, 3^{\frac{3}{4}} 2^{\frac{1}{4}} r^{\frac{3}{2}}, r^{2}), \\ 
    g_{28} & = \diag(1, 1, \tfrac{1}{3} \, \sqrt{3} \sqrt{2} \sqrt{r}, r, \tfrac{1}{2} \, \sqrt{3} \sqrt{2} r^{\frac{3}{2}}, r^{2}),  
  \end{align*}
  for $r > 0$. Now from Theorems \ref{sec:index-symmetry-csla-1}, \ref{sec:index-symmetry-csla-2}, \ref{sec:index-symmetry-csla-3}, \ref{sec:index-symmetry-csla-4} and \ref{sec:index-symmetry-csla-5} we get that the index of symmetry of these metrics is nontrivial.
\end{proof}

\begin{remark}
  Every nilsoliton metric in Theorem \ref{sec:application-to-nilsolitons-1} has $i_{\mf s}(H, g) = 1$. So it is quite surprising that the index of symmetry of a nilsoliton metric is not always maximal (since $H_{22}$ and $H_{28}$ admit metrics with index of symmetry $3$). Notice however that the distribution of symmetry is well behaved with respect to the central distribution. Namely $\mf s \subset Z(\mf h)$ when this property is not obstructed by the underlying Lie algebra structure (notice that no metric on $H_{10}$ or $H_{28}$ has well-behaved distribution of symmetry with respect to the central distribution).
\end{remark}

\begin{remark}
  For the sake of completeness, we will now identify the (normalized) nilsoliton metrics of Will's classification with their counterparts in the proof of Theorem \ref{sec:application-to-nilsolitons-1}. Following the notation from \cite{willSpaceSolvsolitonsLow2011}, the $6$-dimensional nilsoliton metrics with nontrivial index of symmetry are the ones given by 
  \begin{align}
    \mf n_9 & = (0, 0, 0, 0, 2^\frac12 \, \mathbf{12}, \mathbf{14} + 2^\frac12 \, \mathbf{25}), \notag \\
    \mf n_{10} & = (0, 0, 0, 2^\frac12 \mathbf{12}, \mathbf{13}, 2^\frac12 \mathbf{14}), \notag \\
    \mf n_{21} & = (0, 0, 0, 3^\frac12 \, \mathbf{12}, 2 \, \mathbf{14}, 3^\frac12 \, \mathbf{15}), \label{eq:nilsolitons-will} \\
    \mf n_{22} & = (0, 0, 0, 3^\frac12 \mathbf{12}, 3^\frac12 \mathbf{14}, 2^\frac12\mathbf{15} + 2^\frac12 \mathbf{24}), \notag \\
    \mf n_{28} & = (0, 0, 2 \, \mathbf{12}, 6^\frac12 \, \mathbf{13}, 6^\frac12 \, \mathbf{14}, 2 \, \mathbf{15}). \notag 
  \end{align}
  We briefly recall this notation since it slightly differs from ours. Let us consider, for example, the case of $\mf n_{22}$. The above notation means that there exist an orthonormal basis $x_1, \ldots, x_6$ of $\mf n_{22}$ with nontrivial structure coefficients given by  
  \begin{align*}
    [x_1, x_2] = \sqrt 3\, x_4 && 
    [x_1, x_4] = \sqrt 3 \, x_5 &&
    [x_1, x_5] = [x_2, x_4] = \sqrt 2 \, x_6.
  \end{align*}
  To continue with the example, if we want to find $r > 0$ such that $\mf n_{22}$ is isometric to $(\mf h_{22}, g_{22})$, we can define the Lie algebra isomorphism $\varphi$ that maps  
  \begin{align*}
    \varphi(x_1) &= e_1 &&
    \varphi(x_2) = -\tfrac13 \sqrt3 \, e_2 &&
    \varphi(x_3) = e_3 \\
    \varphi(x_4) &= \tfrac13 \, e_4 &&
    \varphi(x_5) = -\tfrac19 \sqrt3 \, e_5 &&
    \varphi(x_6) = \tfrac1{18} \sqrt6 \, e_6 
  \end{align*}
  and notice that $\varphi$ is an isometry if and only if $r = 3 \sqrt6$. With this very same idea we see that the nilsoliton metrics from \eqref{eq:nilsolitons-will} are isometric to   
  \begin{align*}
    g_9|_{r = 2} & = \diag(1, 1, 4, 1, 2, 4), \\ 
    g_{10}|_{r = 2} & = \diag(1, 1, 1, 2, 1, 4), \\ 
    g_{21}|_{r = 6} & = \diag(1, 1, 1, 3, 12, 36), \\ 
    g_{22}|_{r = 3 \sqrt 6} & = \diag(1, 3, 1, 9, 27, 54), \\ 
    g_{28}|_{r = 24} & = \diag(1, 1, 4, 24, 144, 576),
  \end{align*}
  respectively.
\end{remark}

\section*{Acknowledgments}

The authors would like to thank Jorge Lauret for pointing out the problem of studying the index of symmetry of nilsoliton metrics. The authors would also like to thank the anonymous referee who identified, in an earlier version of this article, the omission of some cases in Proposition~\ref{sec:char-solv-lie}. This work was supported by CONICET, UNR and partially supported by SeCyT-UNR and ANPCyT.

\appendix

\section{Tables}
\label{sec:appendix}

In this section we collect the tables with our main classificatory results.

\begin{table}[ht]
  \caption{Automorphism group of $3$-step nilpotent CSLATs}
  \centering
  \(
  \begin{array}{|l|l|l|}
    \hline
    \mf h & \Aut_0(\mf h) & D \\ \hline\hline
    \parbox[m][5pc][c]{1pc}{$\mf h_9$} &
              \begin{psmallmatrix*}[r]
                a_{0} & 0 & 0 & 0 & 0 & 0 \\
                a_{1} & a_{2} & 0 & 0 & 0 & 0 \\
                a_{3} & a_{4} & a_{0}^{2} & 0 & 0 & 0 \\
                a_{5} & a_{6} & a_{7} & a_{8} & 0 & 0 \\
                a_{9} & a_{10} & a_{0} a_{1} & 0 & a_{0} a_{2} & 0 \\
                a_{11} & a_{12} & a_{13} & a_{14} & -a_{0} a_{10} - a_{2} a_{3} + a_{1} a_{4} & a_{0}^{2} a_{2}
              \end{psmallmatrix*},
              \begin{smallmatrix*}[c]
                a_0, a_2, a_8 > 0
              \end{smallmatrix*}
    &
      \begin{smallmatrix*}[l]
        \langle \diag(-1, 1, 1, 1, -1, 1), \\
        \hspace{.3pc} \diag(1, -1, 1, 1, -1, -1), \\
        \hspace{.3pc} \diag(1, 1, 1, -1, 1, 1) \rangle
      \end{smallmatrix*}
    \\ \hline
    \parbox[m][4.4pc][c]{1.1pc}{$\mf h_{10}$} &
                 \begin{psmallmatrix*}[r]
                   a_{0} & 0 & 0 & 0 & 0 & 0 \\
                   a_{1} & a_{2} & 0 & 0 & 0 & 0 \\
                   a_{3} & a_{4} & a_{5} & 0 & 0 & 0 \\
                   a_{6} & a_{7} & a_{8} & a_{0} a_{2} & 0 & 0 \\
                   a_{9} & a_{10} & a_{11} & a_{0} a_{4} & a_{0} a_{5} & 0 \\
                   a_{12} & a_{13} & a_{14} & a_{0} a_{7} & a_{0} a_{8} & a_{0}^{2} a_{2}
                 \end{psmallmatrix*},
                 \begin{smallmatrix*}[c]
                   a_0, a_2, a_5 > 0
                 \end{smallmatrix*}
    &
      \begin{smallmatrix*}[l]
        \langle \diag(-1, 1, 1, -1, -1, 1), \\
        \hspace{.3pc} \diag(1, -1, 1, -1, 1, -1), \\
        \hspace{.3pc} \diag(1, 1, -1, 1, -1, 1) \rangle
      \end{smallmatrix*}
    \\ \hline
    \parbox[m][4.5pc][c]{1.1pc}{$\mf h_{11}$} & \parbox[m][5.1pc][c]{17pc}{
                 $\begin{psmallmatrix*}[r]
                   a_{0} & 0 & 0 & 0 & 0 & 0 \\
                   a_{1} & a_{2} & 0 & 0 & 0 & 0 \\
                   a_{3} & a_{4} & a_{0}^{2} & 0 & 0 & 0 \\
                   a_{5} & a_{6} & a_{7} & a_{0} a_{2} & 0 & 0 \\
                   a_{8} & a_{9} & a_{10} & a_{0} a_{4} & a_{0}^{3} & 0 \\
                   a_{11} & a_{12} & a_{13} & -a_{2} a_{3} + a_{1} a_{4} + a_{0} a_{6} & a_{0}^{2} a_{1} + a_{0} a_{7} & a_{0}^{2} a_{2}
                 \end{psmallmatrix*}$,
                 \parbox[m][1pc][c]{3pc}{
                  $\begin{smallmatrix*}[c]
                   a_0, a_2 > 0
                 \end{smallmatrix*}$
                 }}
    &
      \begin{smallmatrix*}[l]
        \langle \diag(-1, 1, 1, -1, -1, 1), \\
        \hspace{.3pc} \diag(1, -1, 1, -1, 1, -1) \rangle
      \end{smallmatrix*} \\ \hline
      \parbox[m][4.5pc][c]{1.1pc}{$\mf h_{12}$} &
                 \begin{psmallmatrix*}[r]
                   a_{0} & 0 & 0 & 0 & 0 & 0 \\
                   0 & a_{1} & 0 & 0 & 0 & 0 \\
                   a_{2} & a_{3} & a_{4} & 0 & 0 & 0 \\
                   a_{5} & a_{6} & 0 & a_{0} a_{1} & 0 & 0 \\
                   a_{7} & a_{8} & a_{9} & a_{0} a_{3} & a_{0} a_{4} & 0 \\
                   a_{10} & a_{11} & a_{12} & -a_{1} a_{5} & 0 & a_{0} a_{1}^{2}                    
                 \end{psmallmatrix*},
                 \begin{smallmatrix*}[c]
                   a_0, a_1, a_4 > 0
                 \end{smallmatrix*}
    &
      \begin{smallmatrix*}[l]
        \langle \diag(-1, 1, 1, -1, -1, -1), \\
        \hspace{.3pc} \diag(1, -1, 1, -1, 1, 1), \\
        \hspace{.3pc} \diag(1, 1, -1, 1, -1, 1) \rangle
      \end{smallmatrix*}
    \\ \hline
    \parbox[m][5.1pc][c]{1.1pc}{$\mf h_{13}$} &
                 \begin{psmallmatrix*}[r]
                   a_{0} & 0 & 0 & 0 & 0 & 0 \\
                   0 & a_{1} & 0 & 0 & 0 & 0 \\
                   a_{2} & a_{3} & a_{0} a_{1} & 0 & 0 & 0 \\
                   a_{4} & a_{5} & 0 & a_{0} a_{1} & 0 & 0 \\
                   a_{6} & a_{7} & a_{8} & a_{0} a_{3} + a_{0} a_{5} & a_{0}^{2} a_{1} & 0 \\
                   a_{9} & a_{10} & a_{11} & -a_{1} a_{4} & 0 & a_{0} a_{1}^{2}
                 \end{psmallmatrix*},
                 \begin{smallmatrix*}[c]
                   a_0, a_1 > 0
                 \end{smallmatrix*}
    &
      \begin{smallmatrix*}[l]
        \langle \diag(-1, 1, -1, -1, 1, -1), \\
        \hspace{.3pc} \diag(1, -1, -1, -1, -1, 1), \\
        \begin{psmallmatrix*}[r]
          0 & -1 & 0 & 0 & 0 & 0 \\
          1 & 0 & 0 & 0 & 0 & 0 \\
          0 & 0 & -1 & 0 & 0 & 0 \\
          0 & 0 & 1 & 1 & 0 & 0 \\
          0 & 0 & 0 & 0 & 0 & -1 \\
          0 & 0 & 0 & 0 & 1 & 0
        \end{psmallmatrix*}
        \rangle
      \end{smallmatrix*}
    \\ \hline
    \parbox[m][5pc][c]{1.1pc}{$\mf h_{14}$} &
                 \begin{psmallmatrix*}[r]
                   a_{0} & 0 & 0 & 0 & 0 & 0 \\
                   a_{1} & a_{2} & 0 & 0 & 0 & 0 \\
                   a_{3} & a_{4} & a_{2}^{2} & 0 & 0 & 0 \\
                   a_{5} & a_{6} & 0 & a_{0} a_{2} & 0 & 0 \\
                   a_{7} & a_{8} & a_{9} & a_{0} a_{6} & a_{0}^{2} a_{2} & 0 \\
                   a_{10} & a_{11} & a_{12} & a_{0} a_{4} + a_{2} a_{5} - a_{1} a_{6} & -a_{0} a_{1} a_{2} & a_{0} a_{2}^{2}
                 \end{psmallmatrix*},
                 \begin{smallmatrix*}[c]
                   a_0, a_2 > 0 
                 \end{smallmatrix*}
    &
      \begin{smallmatrix*}[l]
        \langle \diag(-1, 1, 1, -1, 1, -1), \\
        \hspace{.3pc} \diag(1, -1, 1, -1, -1, 1) \rangle
      \end{smallmatrix*}
    \\ \hline
    \parbox[m][5.8pc][c]{1.1pc}{$\mf h_{18}$} &
                 \begin{psmallmatrix*}[r]
                   a_{0} & 0 & 0 & 0 & 0 & 0 \\
                   a_{1} & a_{2} & 0 & 0 & 0 & 0 \\
                   a_{3} & -\frac{a_{1} a_{2}}{a_{0}} & \frac{a_{2}^{2}}{a_{0}} & 0 & 0 & 0 \\
                   a_{4} & a_{5} & 0 & a_{0} a_{2} & 0 & 0 \\
                   a_{6} & a_{7} & a_{8} & -a_{1} a_{2} & a_{2}^{2} & 0 \\
                   a_{9} & a_{10} & a_{11} & -a_{2} a_{4} + a_{1} a_{5} + a_{0} a_{7} & a_{0} a_{8} & a_{0} a_{2}^{2}
                 \end{psmallmatrix*},
                 \begin{smallmatrix*}[c]
                   a_0, a_2 > 0
                 \end{smallmatrix*}
    &
      \begin{smallmatrix*}[l]
        \langle \diag(-1, 1, -1, -1, 1, -1), \\
        \hspace{.3pc} \diag(1, -1, 1, -1, 1, 1) \rangle
      \end{smallmatrix*} \\ \hline
      \parbox[m][4.9pc][c]{1.1pc}{$\mf h_{19}^+$} &
                   \begin{psmallmatrix*}[r]
                     a_{0} & 0 & 0 & 0 & 0 & 0 \\
                     0 & a_{1} & 0 & 0 & 0 & 0 \\
                     0 & 0 & a_{2} & 0 & 0 & 0 \\
                     a_{3} & a_{4} & a_{5} & a_{1} a_{2} & 0 & 0 \\
                     \frac{a_{0} a_{5}}{a_{2}} & a_{6} & a_{7} & 0 & a_{0} a_{1} & 0 \\
                     a_{8} & a_{9} & a_{10} & -a_{2} a_{6} & -a_{0} a_{4} & a_{0} a_{1} a_{2}                      
                   \end{psmallmatrix*},
                   \begin{smallmatrix*}[c]
                     a_0, a_1, a_2 > 0 
                   \end{smallmatrix*}
    &
      \begin{smallmatrix*}[l]
        \langle \diag(-1, 1, 1, 1, -1, -1), \\
        \hspace{.3pc} \diag(1, -1, 1, -1, -1, -1) \\
        \begin{psmallmatrix}
          0 & 0 & 1 & 0 & 0 & 0 \\
          0 & 1 & 0 & 0 & 0 & 0 \\
          1 & 0 & 0 & 0 & 0 & 0 \\
          0 & 0 & 0 & 0 & 1 & 0 \\
          0 & 0 & 0 & 1 & 0 & 0 \\
          0 & 0 & 0 & 0 & 0 & 1
        \end{psmallmatrix}
        \rangle
      \end{smallmatrix*}
    \\ \hline
  \end{array} 
  \)
  \label{tab:aut-group-3-step}
\end{table}

\begin{table}[ht]
  \caption{Automorphism group of $4$-step nilpotent CSLATs}
  \centering
  \(
  \begin{array}{|l|l|l|}
    \hline
    \mf h & \Aut_0(\mf h) & D \\ \hline\hline
    \parbox[m][5pc][c]{1.1pc}{$\mf h_{21}$} &
                 \begin{psmallmatrix*}[r]
                   a_{0} & 0 & 0 & 0 & 0 & 0 \\
                   a_{1} & a_{2} & 0 & 0 & 0 & 0 \\
                   a_{3} & a_{4} & a_{5} & 0 & 0 & 0 \\
                   a_{6} & a_{7} & 0 & a_{0} a_{2} & 0 & 0 \\
                   a_{8} & a_{9} & 0 & a_{0} a_{7} & a_{0}^{2} a_{2} & 0 \\
                   a_{10} & a_{11} & a_{12} & a_{0} a_{9} & a_{0}^{2} a_{7} & a_{0}^{3} a_{2}
                 \end{psmallmatrix*},
                 \begin{smallmatrix*}[c]
                   a_1, a_2, a_5 > 0
                 \end{smallmatrix*}
    &
      \begin{smallmatrix*}[l]
        \langle \diag(-1, 1, 1, -1, 1, -1), \\
        \hspace{.3pc} \diag(1, -1, 1, -1,-1, -1), \\
        \hspace{.3pc} \diag(1, 1, -1, 1, 1, 1) \rangle
      \end{smallmatrix*}
    \\ \hline
    \parbox[m][5pc][c]{1.1pc}{$\mf h_{22}$} &
                 \begin{psmallmatrix*}[r]
                   a_{0} & 0 & 0 & 0 & 0 & 0 \\
                   a_{1} & a_{0}^{2} & 0 & 0 & 0 & 0 \\
                   a_{2} & a_{3} & a_{4} & 0 & 0 & 0 \\
                   a_{5} & a_{6} & 0 & a_{0}^{3} & 0 & 0 \\
                   a_{7} & a_{8} & 0 & a_{0} a_{6} & a_{0}^{4} & 0 \\
                   a_{9} & a_{10} & a_{11} & -a_{0}^{2} a_{5} + a_{1} a_{6} + a_{0} a_{8} & a_{0}^{3} a_{1} + a_{0}^{2} a_{6} &  a_{0}^{5}                   
                 \end{psmallmatrix*},
                 \begin{smallmatrix*}[c]
                   a_0, a_4 > 0
                 \end{smallmatrix*}
    &
      \begin{smallmatrix*}[l]
        \langle \diag(-1, 1, 1, -1, 1, -1), \\
        \hspace{.3pc} \diag(1, 1, -1, 1, 1, 1) \rangle
      \end{smallmatrix*}
    \\ \hline
    \parbox[m][5pc][c]{1.1pc}{$\mf h_{23}$} &
                 \begin{psmallmatrix*}[r]
                   a_{0} & 0 & 0 & 0 & 0 & 0 \\
                   a_{1} & a_{2} & 0 & 0 & 0 & 0 \\
                   a_{3} & a_{4} & a_{0} a_{2} & 0 & 0 & 0 \\
                   a_{5} & a_{6} & a_{0} a_{4} & a_{0}^{2} a_{2} & 0 & 0 \\
                   a_{7} & a_{8} & -a_{2} a_{3} + a_{1} a_{4} & a_{0} a_{1} a_{2} & a_{0} a_{2}^{2} & 0 \\
                   a_{9} & a_{10} & a_{0} a_{6} & a_{0}^{2} a_{4} & 0 & a_{0}^{3} a_{2}
                 \end{psmallmatrix*},
                 \begin{smallmatrix*}[c]
                   a_0, a_2 > 0
                 \end{smallmatrix*}
    &
      \begin{smallmatrix*}[l]
        \langle \diag(-1, 1, -1, 1, -1, -1), \\
        \hspace{.3pc} \diag(1, -1, -1, -1, 1, -1) \rangle
      \end{smallmatrix*} \\ \hline
    \mf h_{24} & \parbox[m][5.3pc][c]{20pc}{$
                 \begin{psmallmatrix*}[r]
                   a_{0} & 0 & 0 & 0 & 0 & 0 \\
                   a_{1} & a_{0}^{2} & 0 & 0 & 0 & 0 \\
                   a_{2} & a_{3} & a_{0}^{3} & 0 & 0 & 0 \\
                   a_{4} & a_{5} & 0 & a_{0}^{3} & 0 & 0 \\
                   a_{6} & a_{7} & -a_{0}^{2} a_{1} & a_{0} a_{5} & a_{0}^{4} & 0 \\
                   a_{8} & a_{9} & a_{10} & -a_{0}^{2} a_{2} - a_{0}^{2} a_{4} + a_{1} a_{3} + a_{1} a_{5} + a_{0} a_{7} & a_{0}^{3} a_{1} + a_{0}^{2} a_{5} & a_{0}^{5}
                 \end{psmallmatrix*}$, 
                 $\begin{smallmatrix*}[c]
                   a_0 > 0
                 \end{smallmatrix*}$}
    &
      \begin{smallmatrix*}[l]
        \langle \diag(-1, 1, -1, -1, 1, -1) \rangle
      \end{smallmatrix*}
    \\ \hline
    \parbox[m][5pc][c]{1.1pc}{$\mf h_{25}$} &
                 \begin{psmallmatrix*}[r]
                   a_{0} & 0 & 0 & 0 & 0 & 0 \\
                   a_{1} & a_{2} & 0 & 0 & 0 & 0 \\
                   a_{3} & a_{4} & a_{0}^{3} & 0 & 0 & 0 \\
                   a_{5} & a_{6} & 0 & a_{0} a_{2} & 0 & 0 \\
                   a_{7} & a_{8} & -a_{0}^{2} a_{1} & a_{0} a_{6} & a_{0}^{2} a_{2} & 0 \\
                   a_{9} & a_{10} & a_{11} & -a_{2} a_{3} + a_{1} a_{4} + a_{0} a_{8} & a_{0}^{2} a_{6} & a_{0}^{3} a_{2}
                 \end{psmallmatrix*},
                 \begin{smallmatrix*}[c]
                   a_0, a_2 > 0
                 \end{smallmatrix*}
    &
      \begin{smallmatrix*}[l]
        \langle \diag(-1, 1, -1, -1, 1, -1), \\
        \hspace{.3pc} \diag(1, -1, 1, -1, -1, -1) \rangle
      \end{smallmatrix*}
    \\ \hline
    \parbox[m][5pc][c]{1.1pc}{$\mf h_{26}^-$} &
                   \begin{psmallmatrix*}[r]
                     a_{0} & 0 & 0 & 0 & 0 & 0 \\
                     0 & a_{1} & 0 & 0 & 0 & 0 \\
                     a_{2} & a_{3} & a_{0} a_{1} & 0 & 0 & 0 \\
                     a_{4} & a_{5} & -a_{0} a_{3} & a_{0}^{2} a_{1} & 0 & 0 \\
                     a_{6} & a_{7} & a_{1} a_{2} & 0 & a_{0} a_{1}^{2} & 0 \\
                     a_{8} & a_{9} & -a_{1} a_{4} + a_{0} a_{7} & -a_{0} a_{1} a_{2} & a_{0} a_{1} a_{3} & a_{0}^{2} a_{1}^{2}
                   \end{psmallmatrix*},
                   \begin{smallmatrix*}[c]
                     a_0, a_1 > 0
                   \end{smallmatrix*}
    &
      \begin{smallmatrix*}[l]
        \langle \diag(-1, 1, -1, 1, -1, 1), \\
        \begin{psmallmatrix*}[r]
          0 & 1 & 0 & 0 & 0 & 0 \\
          1 & 0 & 0 & 0 & 0 & 0 \\
          0 & 0 & -1 & 0 & 0 & 0 \\
          0 & 0 & 0 & 0 & -1 & 0 \\
          0 & 0 & 0 & -1 & 0 & 0 \\
          0 & 0 & 0 & 0 & 0 & -1
        \end{psmallmatrix*}
        \rangle
      \end{smallmatrix*}
    \\ \hline
    \parbox[m][5pc][c]{1.1pc}{$\mf h_{27}$} &
                 \begin{psmallmatrix*}[r]
                   a_{0} & 0 & 0 & 0 & 0 & 0 \\
                   a_{1} & a_{2} & 0 & 0 & 0 & 0 \\
                   0 & 0 & a_{0}^{2} & 0 & 0 & 0 \\
                   a_{3} & a_{4} & a_{0} a_{1} & a_{0} a_{2} & 0 & 0 \\
                   a_{5} & a_{6} & a_{0} a_{3} & a_{0} a_{4} & a_{0}^{2} a_{2} & 0 \\
                   a_{7} & a_{8} & a_{9} & a_{0} a_{6} & a_{0}^{2} a_{4} & a_{0}^{3} a_{2}
                 \end{psmallmatrix*},
                 \begin{smallmatrix*}[c]
                   a_0, a_2 > 0
                 \end{smallmatrix*}
    &
      \begin{smallmatrix*}[l]
        \langle \diag(-1, 1, 1, -1, 1, -1), \\
        \hspace{.3pc} \diag(1, -1, 1, -1, -1, -1) \rangle
      \end{smallmatrix*}
      \\ \hline
  \end{array}  
  \)
  \label{tab:aut-group-4-step}
\end{table}

\begin{table}[ht]
  \caption{Automorphism group of $5$-step nilpotent CSLATs}
  \centering
  \(
  \begin{array}{|l|l|l|}
    \hline
    \mf h & \Aut_0(\mf h) & D \\ \hline\hline
    \parbox[m][5pc][c]{1.1pc}{$\mf h_{28}$} &
                 \begin{psmallmatrix*}[r]
                   a_{0} & 0 & 0 & 0 & 0 & 0 \\
                   a_{1} & a_{2} & 0 & 0 & 0 & 0 \\
                   a_{3} & a_{4} & a_{0} a_{2} & 0 & 0 & 0 \\
                   a_{5} & a_{6} & a_{0} a_{4} & a_{0}^{2} a_{2} & 0 & 0 \\
                   a_{7} & a_{8} & a_{0} a_{6} & a_{0}^{2} a_{4} & a_{0}^{3} a_{2} & 0 \\
                   a_{9} & a_{10} & a_{0} a_{8} & a_{0}^{2} a_{6} & a_{0}^{3} a_{4} & a_{0}^{4} a_{2}
                 \end{psmallmatrix*},
                 \begin{smallmatrix*}[c]
                   a_0, a_2 > 0
                 \end{smallmatrix*}
    &
      \begin{smallmatrix*}[l]
        \langle \diag(-1, 1, -1, 1, -1, 1), \\
        \hspace{.3pc} \diag(1, -1, -1, -1, -1, -1) \rangle
      \end{smallmatrix*}
    \\ \hline
    \parbox[m][5.2pc][c]{1.1pc}{$\mf h_{29}$} &
                 \begin{psmallmatrix*}[r]
                   a_{0} & 0 & 0 & 0 & 0 & 0 \\
                   a_{1} & a_{0}^{3} & 0 & 0 & 0 & 0 \\
                   a_{2} & a_{3} & a_{0}^{4} & 0 & 0 & 0 \\
                   a_{4} & a_{5} & a_{0} a_{3} & a_{0}^{5} & 0 & 0 \\
                   a_{6} & a_{7} & a_{0} a_{5} & a_{0}^{2} a_{3} & a_{0}^{6} & 0 \\
                   a_{8} & a_{9} & -a_{0}^{3} a_{2} + a_{1} a_{3} + a_{0} a_{7} & a_{0}^{4} a_{1} + a_{0}^{2} a_{5} & a_{0}^{3} a_{3} & a_{0}^{7}
                 \end{psmallmatrix*},
                 \begin{smallmatrix*}[c]
                   a_0 > 0
                 \end{smallmatrix*}
    &
      \begin{smallmatrix*}[l]
           \langle \diag(-1, -1, 1, -1, 1, -1) \rangle
      \end{smallmatrix*}
    \\ \hline
    \parbox[m][5.2pc][c]{1.1pc}{$\mf h_{30}$} &
                 \begin{psmallmatrix*}[r]
                   a_{0} & 0 & 0 & 0 & 0 & 0 \\
                   0 & a_{0}^{2} & 0 & 0 & 0 & 0 \\
                   a_{1} & a_{2} & a_{0}^{3} & 0 & 0 & 0 \\
                   a_{3} & a_{4} & a_{0} a_{2} & a_{0}^{4} & 0 & 0 \\
                   a_{5} & a_{6} & -a_{0}^{2} a_{1} + a_{0} a_{4} & a_{0}^{2} a_{2} & a_{0}^{5} & 0 \\
                   a_{7} & a_{8} & -a_{0}^{2} a_{3} + a_{0} a_{6} & -a_{0}^{3} a_{1} + a_{0}^{2} a_{4} & a_{0}^{3} a_{2} & a_{0}^{6}
                 \end{psmallmatrix*},
                 \begin{smallmatrix*}[c]
                   a_0 > 0
                 \end{smallmatrix*}
    &
      \begin{smallmatrix*}[l]
           \langle \diag(-1, 1, -1, 1, -1, 1) \rangle
      \end{smallmatrix*} \\ \hline
    \mf h_{31} & \parbox[m][6.7pc][c]{20pc}{
                 $\begin{psmallmatrix*}[r]
                   a_{0} & 0 & 0 & 0 & 0 & 0 \\
                   0 & a_{1} & 0 & 0 & 0 & 0 \\
                   a_{2} & a_{3} & a_{0} a_{1} & 0 & 0 & 0 \\
                   a_{4} & \frac{a_{3}^{2}}{2 \, a_{1}} & a_{0} a_{3} & a_{0}^{2} a_{1} & 0 & 0 \\
                   a_{5} & a_{6} & \frac{a_{0} a_{3}^{2}}{2 \, a_{1}} & a_{0}^{2} a_{3} & a_{0}^{3} a_{1} & 0 \\
                   a_{7} & a_{8} & \frac{a_{2} a_{3}^{2} - 2 \, a_{1} a_{3} a_{4} + 2 \, a_{1}^{2} a_{5}}{2 \, a_{1}} & a_{0} a_{2} a_{3} - a_{0} a_{1} a_{4} & a_{0}^{2} a_{1} a_{2} & a_{0}^{3} a_{1}^{2}
                 \end{psmallmatrix*}$,
                 $\begin{smallmatrix*}[c]
                   a_0, a_1 > 0
                 \end{smallmatrix*}$}
    &
      \begin{smallmatrix*}[l]
        \langle \diag(-1, 1, -1, 1, -1, -1), \\
        \hspace{.3pc} \diag(1, -1, -1, -1, -1, 1) \rangle
      \end{smallmatrix*}
    \\ \hline
    \mf h_{32} & \parbox[m][7.4pc][c]{18pc}{
                 $\begin{psmallmatrix*}[r]
                   a_{0} & 0 & 0 & 0 & 0 & 0 \\
                   0 & a_{0}^{2} & 0 & 0 & 0 & 0 \\
                   a_{1} & a_{2} & a_{0}^{3} & 0 & 0 & 0 \\
                   a_{3} & \frac{a_{2}^{2}}{2 \, a_{0}^{2}} & a_{0} a_{2} & a_{0}^{4} & 0 & 0 \\
                   a_{4} & a_{5} & -\frac{2 \, a_{0}^{3} a_{1} - a_{2}^{2}}{2 \, a_{0}} & a_{0}^{2} a_{2} & a_{0}^{5} & 0 \\
                   a_{6} & a_{7} & \frac{2 \, a_{0}^{4} a_{4} - 2 \, a_{0}^{2} a_{2} a_{3} + a_{1} a_{2}^{2}}{2 \, a_{0}^{2}} & -a_{0}^{3} a_{3} + a_{0} a_{1} a_{2} & a_{0}^{4} a_{1} & a_{0}^{7}
                 \end{psmallmatrix*}$,
                 $\begin{smallmatrix*}[c]
                   a_0 > 0
                 \end{smallmatrix*}$}
    &
      \begin{smallmatrix*}[l]
        \langle \diag(-1, 1, -1, 1, -1, -1) \rangle
      \end{smallmatrix*}
    \\ \hline
  \end{array} 
  \) 
  \label{tab:aut-group-5-step}
\end{table}

\begin{table}[ht]
  \centering
  \caption{Moduli space of left-invariant metrics and isometric
    automorphisms for $3$-step CSLATs except $\mf h_{13}$ and $\mf h_{19}^+$}
  \begin{tabular}[t]{|c|c|c|c|c|c|c|}
    \hline
    \parbox[m][1.2pc][c]{1.1pc}{$\mf h$} & $\Sigma$ & $\#\operatorname{nd}(\Sigma)=0$ & $\{e\}$ & $\mathbb{Z}_2$ & $\mathbb{Z}_2^2$ & $\mathbb{Z}_2^3$ \\
    \hline\hline
    \multirow{4}{*}{$\mf h_9$} & \multirow{4}{*}{\(
                                 \begin{psmallmatrix}
                                   1 & 0 & 0 & 0 & 0 & 0 \\
                                   0 & 1 & 0 & 0 & 0 & 0 \\
                                   0 & 0 & s_{0} & 0 & 0 & 0 \\
                                   0 & 0 & 0 & 1 & 0 & 0 \\
                                   0 & 0 & s_{1} & s_{2} & s_{3} & 0 \\
                                   0 & 0 & 0 & 0 & s_{4} & s_{5}
                                 \end{psmallmatrix}
                                 \)} & 0 & 1 &   &   & \\ \cline{3-7}
            &  & 1 &   & 3 &   & \\ \cline{3-7}
              &     & 2 &   &   & 3 & \\ \cline{3-7}
              &     & 3 &   &   &   & 1 \\
    \hline
    \multirow{4}{*}{$\mf h_{10}$} & \multirow{4}{*}{\(
                                    \begin{psmallmatrix}
                                      1 & 0 & 0 & 0 & 0 & 0 \\
                                      0 & 1 & 0 & 0 & 0 & 0 \\
                                      0 & 0 & 1 & 0 & 0 & 0 \\
                                      0 & 0 & 0 & s_{0} & 0 & 0 \\
                                      0 & 0 & 0 & s_{1} & s_{2} & 0 \\
                                      0 & 0 & 0 & s_{3} & s_{4} & s_{5}
                                    \end{psmallmatrix}
                                    \)} & 0 &  & 1 &   & \\ \cline{3-7}
            & & 1 & & 3 &   & \\ \cline{3-7}
            & & 2 & &   & 3 & \\ \cline{3-7}
            & & 3 & &   &   & 1 \\
    \hline
    \multirow{4}{*}{$\mf h_{11}$} & \multirow{4}{*}{\(
                                    \begin{psmallmatrix}
                                      1 & 0 & 0 & 0 & 0 & 0 \\
                                      0 & 1 & 0 & 0 & 0 & 0 \\
                                      0 & 0 & s_{0} & 0 & 0 & 0 \\
                                      0 & 0 & 0 & s_{1} & 0 & 0 \\
                                      0 & 0 & 0 & s_{2} & s_{3} & 0 \\
                                      0 & 0 & 0 & s_{4} & s_{5} & s_{6}
                                    \end{psmallmatrix}
                                    \)} & 0 & 1 & & & - \\ \cline{3-7}
            & & 1 & 3 & & & - \\ \cline{3-7}
            & & 2 & & 3 & & - \\ \cline{3-7}
            & & 3 & & & 1 & - \\ 
    \hline
    \multirow{6}{*}{$\mf h_{12}$} & \multirow{6}{*}{\(
                                    \begin{psmallmatrix}
                                      1 & 0 & 0 & 0 & 0 & 0 \\
                                      s_{0} & 1 & 0 & 0 & 0 & 0 \\
                                      0 & 0 & 1 & 0 & 0 & 0 \\
                                      0 & 0 & s_{1} & s_{2} & 0 & 0 \\
                                      0 & 0 & 0 & s_{3} & s_{4} & 0 \\
                                      0 & 0 & 0 & s_{5} & s_{6} & s_{7}
                                    \end{psmallmatrix}
                                    \)} & 0 & 1 & & & \\ \cline{3-7}
            & & 1 & 5 &    &   & \\ \cline{3-7}
            & & 2 & 8 & 2  &   & \\ \cline{3-7}
            & & 3 &   & 10 &   & \\ \cline{3-7}
            & & 4 &   &    & 5 & \\ \cline{3-7}
            & & 5 &   &    &   & 1 \\
    \hline
    \multirow{5}{*}{$\mf h_{14}$} & \multirow{5}{*}{\(
                                    \begin{psmallmatrix}
                                      1 & 0 & 0 & 0 & 0 & 0 \\
                                      0 & 1 & 0 & 0 & 0 & 0 \\
                                      0 & 0 & s_{0} & 0 & 0 & 0 \\
                                      0 & 0 & s_{1} & s_{2} & 0 & 0 \\
                                      0 & 0 & 0 & s_{3} & s_{4} & 0 \\
                                      0 & 0 & 0 & s_{5} & s_{6} & s_{7}
                                    \end{psmallmatrix}
                                    \)} & 0 & 1 & & & - \\ \cline{3-7}
            & & 1 & 4 & & & - \\ \cline{3-7}
            & & 2 & 5 & 1 &   & - \\ \cline{3-7}
            & & 3 &   & 4 &   & - \\ \cline{3-7}
            & & 4 &   &   & 1 & - \\
    \hline
    \multirow{6}{*}{$\mf h_{18}$} & \multirow{6}{*}{\(
                                    \begin{psmallmatrix}
                                      1 & 0 & 0 & 0 & 0 & 0 \\
                                      0 & 1 & 0 & 0 & 0 & 0 \\
                                      0 & s_{0} & s_{1} & 0 & 0 & 0 \\
                                      0 & 0 & s_{2} & s_{3} & 0 & 0 \\
                                      0 & 0 & 0 & s_{4} & s_{5} & 0 \\
                                      0 & 0 & 0 & s_{6} & s_{7} & s_{8}
                                    \end{psmallmatrix}
                                    \)} & 0 & 1 & & & - \\ \cline{3-7}
            & & 1 & 5  &   &   & - \\ \cline{3-7}
            & & 2 & 10 &   &   & - \\ \cline{3-7}
            & & 3 & 8  & 2 &   & - \\ \cline{3-7}
            & & 4 &    & 5 &   & - \\ \cline{3-7}
            & & 5 &    &   & 1 & - \\
            \hline
  \end{tabular}
  \label{tab:sigmas-3step-no-h19}
\end{table}

\begin{table}[ht]
  \centering
  \caption{Moduli space of left-invariant metrics and isometric
    automorphisms for $4$-step CSLATs except $\mf h_{26}^-$}
  \begin{tabular}[t]{|c|c|c|c|c|c|c|}
    \hline
    \parbox[m][1.2pc][c]{1.1pc}{$\mf h$} & $\Sigma$ & $\#\operatorname{nd}(\Sigma)=0$ & $\{e\}$ & $\mathbb{Z}_2$ & $\mathbb{Z}_2^2$ & $\mathbb{Z}_2^3$ \\
    \hline\hline
    \multirow{6}{*}{$\mf h_{21}$} & \multirow{6}{*}{\(
                                    \begin{psmallmatrix}
                                      1 & 0 & 0 & 0 & 0 & 0 \\
                                      0 & 1 & 0 & 0 & 0 & 0 \\
                                      0 & 0 & 1 & 0 & 0 & 0 \\
                                      0 & 0 & s_{0} & s_{1} & 0 & 0 \\
                                      0 & 0 & s_{2} & s_{3} & s_{4} & 0 \\
                                      0 & 0 & 0 & s_{5} & s_{6} & s_{7}
                                    \end{psmallmatrix}
                                    \)} & 0 & & 1 & & \\ \cline{3-7}
            & & 1 & & 5 &   & \\ \cline{3-7}
            & & 2 & & 9 & 1 & \\ \cline{3-7}
            & & 3 & & 5 & 5 & \\ \cline{3-7}
            & & 4 & &   & 4 & 1 \\ \cline{3-7}
            & & 5 & &   &   & 1 \\
    \hline
    \multirow{6}{*}{$\mf h_{22}$} & \multirow{6}{*}{\(
                                    \begin{psmallmatrix}
                                      1 & 0 & 0 & 0 & 0 & 0 \\
                                      0 & s_{0} & 0 & 0 & 0 & 0 \\
                                      0 & 0 & 1 & 0 & 0 & 0 \\
                                      0 & 0 & s_{1} & s_{2} & 0 & 0 \\
                                      0 & 0 & s_{3} & s_{4} & s_{5} & 0 \\
                                      0 & 0 & 0 & s_{6} & s_{7} & s_{8}
                                    \end{psmallmatrix}
                                    \)} & 0 & 1 & & & - \\ \cline{3-7}
            & & 1 & 5 &   &   & - \\ \cline{3-7}
            & & 2 & 9 & 1 &   & - \\ \cline{3-7}
            & & 3 & 5 & 5 &   & - \\ \cline{3-7}
            & & 4 &   & 4 & 1 & - \\ \cline{3-7}
            & & 5 &   &   & 1 & - \\
    \hline
    \multirow{7}{*}{$\mf h_{23}$} & \multirow{7}{*}{\(
                                    \begin{psmallmatrix}
                                      1 & 0 & 0 & 0 & 0 & 0 \\
                                      0 & 1 & 0 & 0 & 0 & 0 \\
                                      0 & 0 & s_{0} & 0 & 0 & 0 \\
                                      0 & 0 & s_{1} & s_{2} & 0 & 0 \\
                                      0 & 0 & s_{3} & s_{4} & s_{5} & 0 \\
                                      0 & 0 & s_{6} & s_{7} & s_{8} & s_{9}
                                    \end{psmallmatrix}
                                    \)} & 0 & 1 & & & - \\ \cline{3-7}
            & & 1 & 6  &   &   & - \\ \cline{3-7}
            & & 2 & 15 &   &   & - \\ \cline{3-7}
            & & 3 & 18 & 2 &   & - \\ \cline{3-7}
            & & 4 & 8  & 7 &   & - \\ \cline{3-7}
            & & 5 &    & 5 & 1 & - \\ \cline{3-7}
            & & 6 &    &   & 1 & - \\
    \hline
    \multirow{6}{*}{$\mf h_{24}$} & \multirow{6}{*}{\(
                                    \begin{psmallmatrix}
                                      1 & 0 & 0 & 0 & 0 & 0 \\
                                      0 & s_{0} & 0 & 0 & 0 & 0 \\
                                      0 & 0 & s_{1} & 0 & 0 & 0 \\
                                      0 & 0 & s_{2} & s_{3} & 0 & 0 \\
                                      0 & 0 & s_{4} & s_{5} & s_{6} & 0 \\
                                      0 & 0 & 0 & s_{7} & s_{8} & s_{9}
                                    \end{psmallmatrix}
                                    \)}  & 0 & 1  &   & - & - \\ \cline{3-7}
            & & 1 & 5  &   & - & - \\ \cline{3-7}
            & & 2 & 10 &   & - & - \\ \cline{3-7}
            & & 3 & 9  & 1 & - & - \\ \cline{3-7}
            & & 4 & 3  & 2 & - & - \\ \cline{3-7}
            & & 5 &    & 1 & - & - \\ 
    \hline 
    \multirow{6}{*}{$\mf h_{25}$} & \multirow{6}{*}{\(
                                    \begin{psmallmatrix}
                                      1 & 0 & 0 & 0 & 0 & 0 \\
                                      0 & 1 & 0 & 0 & 0 & 0 \\
                                      0 & 0 & s_{0} & 0 & 0 & 0 \\
                                      0 & 0 & s_{1} & s_{2} & 0 & 0 \\
                                      0 & 0 & s_{3} & s_{4} & s_{5} & 0 \\
                                      0 & 0 & 0 & s_{6} & s_{7} & s_{8}
                                    \end{psmallmatrix}
                                    \)} & 0 & 1 & & & - \\ \cline{3-7}
            & & 1 & 5 &   &   & - \\ \cline{3-7}
            & & 2 & 9 & 1 &   & - \\ \cline{3-7}
            & & 3 & 5 & 5 &   & - \\ \cline{3-7}
            & & 4 &   & 4 & 1 & - \\ \cline{3-7}
            & & 5 &   &   & 1 & - \\
    \hline
    \multirow{8}{*}{$\mf h_{27}$} & \multirow{8}{*}{\(
                                    \begin{psmallmatrix}
                                      1 & 0 & 0 & 0 & 0 & 0 \\
                                      0 & 1 & 0 & 0 & 0 & 0 \\
                                      s_{0} & s_{1} & s_{2} & 0 & 0 & 0 \\
                                      0 & 0 & s_{3} & s_{4} & 0 & 0 \\
                                      0 & 0 & s_{5} & s_{6} & s_{7} & 0 \\
                                      0 & 0 & 0 & s_{8} & s_{9} & s_{10}
                                    \end{psmallmatrix}
                                    \)}  & 0 & 1 & & & - \\ \cline{3-7}
            & & 1 & 7  &    &   & - \\ \cline{3-7}
            & & 2 & 21 &    &   & - \\ \cline{3-7}
            & & 3 & 34 & 1  &   & - \\ \cline{3-7}
            & & 4 & 30 & 5  &   & - \\ \cline{3-7}
            & & 5 & 11 & 10 &   & - \\ \cline{3-7}
            & & 6 &    & 6  & 1 & - \\ \cline{3-7}
            & & 7 &    &    & 1 & - \\ \hline
  \end{tabular}
  \label{tab:sigmas-4step-no-h26}
\end{table}

\begin{table}[ht]
  \centering
  \caption{Moduli space of left-invariant metrics and isometric
    automorphisms for $5$-step CSLATs }
  \begin{tabular}[t]{|c|c|c|c|c|c|}
    \hline
    \parbox[m][1.2pc][c]{1.1pc}{$\mf h$} & $\Sigma$ & $\#\operatorname{nd}(\Sigma)=0$ & $\{e\}$ & $\mathbb{Z}_2$ & $\mathbb{Z}_2^2$ \\
    \hline\hline
    \multirow{7}{*}{$\mathfrak{h}_{28}$} & \multirow{7}{*}{\(
                                           \begin{psmallmatrix}
                                             1 & 0 & 0 & 0 & 0 & 0 \\
                                             0 & 1 & 0 & 0 & 0 & 0 \\
                                             0 & 0 & s_{0} & 0 & 0 & 0 \\
                                             0 & 0 & s_{1} & s_{2} & 0 & 0 \\
                                             0 & 0 & s_{3} & s_{4} & s_{5} & 0 \\
                                             0 & 0 & s_{6} & s_{7} & s_{8} & s_{9}
                                           \end{psmallmatrix}
                                           \)} & 0 &  & 1 & \\ \cline{3-6}
            & & 1 & & 6  & \\ \cline{3-6} 
            & & 2 & & 15 & \\ \cline{3-6}
            & & 3 & & 20 & \\ \cline{3-6}
            & & 4 & & 14 & 1 \\ \cline{3-6}
            & & 5 & & 4  & 2 \\ \cline{3-6}
            & & 6 & &    & 1 \\
    \hline
    \multirow{7}{*}{$\mathfrak{h}_{29}$} & \multirow{7}{*}{\(
                                           \begin{psmallmatrix}
                                             1 & 0 & 0 & 0 & 0 & 0 \\
                                             0 & s_{0} & 0 & 0 & 0 & 0 \\
                                             0 & 0 & s_{1} & 0 & 0 & 0 \\
                                             0 & 0 & s_{2} & s_{3} & 0 & 0 \\
                                             0 & 0 & s_{4} & s_{5} & s_{6} & 0 \\
                                             0 & 0 & s_{7} & s_{8} & s_{9} & s_{10}
                                           \end{psmallmatrix} 
                                           \)} & 0 & 1 &  & - \\ \cline{3-6}
            & & 1 & 6  &   & - \\ \cline{3-6}
            & & 2 & 15 &   & - \\ \cline{3-6}
            & & 3 & 20 &   & - \\ \cline{3-6}
            & & 4 & 14 & 1 & - \\ \cline{3-6}
            & & 5 & 4  & 2 & - \\ \cline{3-6}
            & & 6 &    & 1 & - \\
    \hline
    \multirow{8}{*}{$\mathfrak{h}_{30}$} & \multirow{8}{*}{\(
                                           \begin{psmallmatrix}
                                             1 & 0 & 0 & 0 & 0 & 0 \\
                                             s_{0} & s_{1} & 0 & 0 & 0 & 0 \\
                                             0 & 0 & s_{2} & 0 & 0 & 0 \\
                                             0 & 0 & s_{3} & s_{4} & 0 & 0 \\
                                             0 & 0 & s_{5} & s_{6} & s_{7} & 0 \\
                                             0 & 0 & s_{8} & s_{9} & s_{10} & s_{11}
                                           \end{psmallmatrix}
                                           \)} & 0 & 1 & & - \\ \cline{3-6}
            & & 1 & 7  &   & - \\ \cline{3-6}
            & & 2 & 21 &   & - \\ \cline{3-6}
            & & 3 & 35 &   & - \\ \cline{3-6}
            & & 4 & 35 &   & - \\ \cline{3-6}
            & & 5 & 20 & 1 & - \\ \cline{3-6}
            & & 6 & 5  & 2 & - \\ \cline{3-6}
            & & 7 &    & 1 & - \\ 
    \hline
    \multirow{9}{*}{$\mathfrak{h}_{31}$} & \multirow{9}{*}{\(
                                           \begin{psmallmatrix}
                                             1 & 0 & 0 & 0 & 0 & 0 \\
                                             s_{0} & 1 & 0 & 0 & 0 & 0 \\
                                             0 & 0 & s_{1} & 0 & 0 & 0 \\
                                             0 & s_{2} & s_{3} & s_{4} & 0 & 0 \\
                                             0 & 0 & s_{5} & s_{6} & s_{7} & 0 \\
                                             0 & 0 & s_{8} & s_{9} & s_{10} & s_{11}
                                           \end{psmallmatrix}
                                           \)} & 0 & 1 & & \\ \cline{3-6}
            & & 1 & 8  &   &   \\ \cline{3-6}
            & & 2 & 28 &   & \\ \cline{3-6}
            & & 3 & 56 &   & \\ \cline{3-6}
            & & 4 & 67 &  3 & \\ \cline{3-6}
            & & 5 & 44 & 12 & \\ \cline{3-6}
            & & 6 & 12 & 15 & 1 \\ \cline{3-6}
            & & 7 &    & 6 & 2  \\ \cline{3-6}
            & & 8 &    &   & 1  \\
    \hline
    \multirow{9}{*}{$\mathfrak{h}_{32}$} & \multirow{9}{*}{\(
                                           \begin{psmallmatrix}
                                             1 & 0 & 0 & 0 & 0 & 0 \\
                                             s_{0} & s_{1} & 0 & 0 & 0 & 0 \\
                                             0 & 0 & s_{2} & 0 & 0 & 0 \\
                                             0 & s_{3} & s_{4} & s_{5} & 0 & 0 \\
                                             0 & 0 & s_{6} & s_{7} & s_{8} & 0 \\
                                             0 & 0 & s_{9} & s_{10} & s_{11} & s_{12}
                                           \end{psmallmatrix}
                                           \)} & 0 & 1 & & - \\ \cline{3-6}
            & & 1  & 8  &   & - \\ \cline{3-6}
            & & 2  & 28 &   & - \\ \cline{3-6}
            & & 3  & 56 &   & - \\ \cline{3-6}
            & & 4  & 69 & 1 & - \\ \cline{3-6}
            & & 5  & 52 & 4 & - \\ \cline{3-6}
            & & 6  & 22 & 6 & - \\ \cline{3-6}
            & & 7  & 4  & 4 & - \\ \cline{3-6}
            & & 8  &    & 1 & - \\ \hline
  \end{tabular}
  \label{tab:sigmas-5step}
\end{table}

\begin{table}[ht]
  \centering
  \caption{Moduli space of left-invariant metrics for $\mf h_{13}$,
    $\mf h_{19}^+$ and $\mf h_{26}^-$}
  \begin{tabular}{|c|c|}
    \hline
    $\mathfrak h$ & $\Sigma$ \\
    \hline\hline
    \parbox[m][4.2pc][c]{1.1pc}{$\mf h_{13}$} & $\begin{psmallmatrix}
      1 & 0 & 0 & 0 & 0 & 0 \\
      s_{0} & 1 & 0 & 0 & 0 & 0 \\
      0 & 0 & s_{1} & 0 & 0 & 0 \\
      0 & 0 & s_{2} & s_{3} & 0 & 0 \\
      0 & 0 & 0 & s_{4} & s_{5} & 0 \\
      0 & 0 & 0 & s_{6} & s_{7} & s_{8}
    \end{psmallmatrix}$ \\
    \hline
    \parbox[m][4.2pc][c]{1.1pc}{$\mathfrak{h}_{19}^{+}$} & \(
                              \begin{psmallmatrix}
                                1 & 0 & 0 & 0 & 0 & 0 \\
                                s_{0} & 1 & 0 & 0 & 0 & 0 \\
                                s_{1} & s_{2} & 1 & 0 & 0 & 0 \\
                                0 & 0 & 0 & s_{3} & 0 & 0 \\
                                s_{4} & 0 & 0 & s_{5} & s_{6} & 0 \\
                                0 & 0 & 0 & s_{7} & s_{8} & s_{9}
                              \end{psmallmatrix}
                              \) \\
    \hline
    \parbox[m][4.2pc][c]{1.1pc}{$\mathfrak{h}_{26}^{-}$} & \(
                              \begin{psmallmatrix}
                                1 & 0 & 0 & 0 & 0 & 0 \\
                                s_{0} & 1 & 0 & 0 & 0 & 0 \\
                                0 & 0 & s_{1} & 0 & 0 & 0 \\
                                0 & 0 & s_{2} & s_{3} & 0 & 0 \\
                                0 & 0 & s_{4} & s_{5} & s_{6} & 0 \\
                                0 & 0 & s_{7} & s_{8} & s_{9} & s_{10}
                              \end{psmallmatrix}
                              \) \\ \hline
  \end{tabular}
  \label{tab:sigmas4}
\end{table}

\bibliographystyle{amsalpha}
\bibliography{csla.bib}

\providecommand{\bysame}{\leavevmode\hbox to3em{\hrulefill}\thinspace}
\providecommand{\MR}{\relax\ifhmode\unskip\space\fi MR }
\providecommand{\MRhref}[2]{%
  \href{http://www.ams.org/mathscinet-getitem?mr=#1}{#2}
}
\providecommand{\href}[2]{#2}
\begin{thebibliography}{ORT14}

\bibitem[BL23]{bohmNoncompactEinsteinManifolds2023}
C.~B{\"o}hm and R.~Lafuente, \emph{Non-compact {{Einstein}} manifolds with symmetry}, J. Amer. Math. Soc. \textbf{36} (2023), no.~3, 591--651.

\bibitem[BOR17]{berndtCompactHomogeneousRiemannian2017}
J.~Berndt, C.~Olmos, and S.~Reggiani, \emph{Compact homogeneous {{Riemannian}} manifolds with low coindex of symmetry}, J. Eur. Math. Soc. \textbf{19} (2017), no.~1, 221--254.

\bibitem[CCR24]{cardosoAuxiliarySageMathNotebooks2024}
I.~Cardoso, A.~Cosgaya, and S.~Reggiani, \emph{Auxiliary {{SageMath}} notebooks for the article {{The}} moduli space of left-invariant metrics on six-dimensional characteristically solvable nilmanifolds}, https://github.com/silvioreggiani/csla-dim-6/, 2024.

\bibitem[CFS05]{consoleModuliSpaceSixDimensional2005}
S.~Console, A.~Fino, and E.~Samiou, \emph{The {{Moduli Space}} of {{Six-Dimensional Two-Step Nilpotent Lie Algebras}}}, Ann. Global Anal. Geom. \textbf{27} (2005), no.~1, 17--32.

\bibitem[CR22]{cosgayaIsometryGroupsThreedimensional2022}
A.~Cosgaya and S.~Reggiani, \emph{Isometry groups of three-dimensional {{Lie}} groups}, Ann. Global Anal. Geom. \textbf{61} (2022), no.~4, 831--845.

\bibitem[DS13]{discalaInvariantMetricsIwasawa2013}
A.~J. Di~Scala, \emph{Invariant metrics on the {{Iwasawa}} manifold}, Quart. J. Math. \textbf{64} (2013), no.~2, 555--569.

\bibitem[FF23]{ficzereIsometryGroupsSixdimensional2023}
K.~Ficzere and {\'A}.~Figula, \emph{Isometry groups of six-dimensional nilmanifolds}, Aequat. Math. \textbf{97} (2023), no.~4, 725--752.

\bibitem[FN18]{figulaIsometryClassesSimply2018}
{\'A}.~Figula and P.~T. Nagy, \emph{Isometry classes of simply connected nilmanifolds}, J. Geom. Phys. \textbf{132} (2018), 370--381.

\bibitem[HL09]{haLeftInvariantMetrics2009}
K.~Y. Ha and J.~B. Lee, \emph{Left invariant metrics and curvatures on simply connected three-dimensional {{Lie}} groups: {{Left}} invariant metrics and curvatures}, Math. Nachr. \textbf{282} (2009), no.~6, 868--898.

\bibitem[Hoc65]{hochschildStructureLieGroups1965}
G.~Hochschild, \emph{The structure of {{Lie}} groups. ({{Holden-Day Series}} in {{Mathematics}})}, San Francisco-London-Amsterdam: Holde-Day, Inc. IX, 230 p. (1965)., 1965.

\bibitem[KTT11]{kodamaSpaceLeftinvariantMetrics2011}
H.~Kodama, A.~Takahara, and H.~Tamaru, \emph{The space of left-invariant metrics on a {{Lie}} group up to isometry and scaling}, Manuscripta Math. \textbf{135} (2011), no.~1-2, 229--243.

\bibitem[Lau03]{lauretDegenerationsLieAlgebras2003}
J.~Lauret, \emph{Degenerations of {{Lie}} algebras and geometry of {{Lie}} groups}, Diff. Geom. Appl. \textbf{18} (2003), no.~2, 177--194.

\bibitem[Lau11]{lauretRicciSolitonSolvmanifolds2011}
\bysame, \emph{Ricci soliton solvmanifolds}, J. Reine Angew. Math. \textbf{2011} (2011), no.~650, 1--21.

\bibitem[Mag07]{magninComplexStructuresIndecomposable2007}
L.~Magnin, \emph{Complex structures on indecomposable 6-dimensional nilpotent real {{Lie}} algebras}, Int. J. Algebra Comput. \textbf{17} (2007), no.~01, 77--113.

\bibitem[May21]{mayIndexSymmetryLeftinvariant2021}
R.~May, \emph{The index of symmetry for a left-invariant metric on a solvable three-dimensional {{Lie}} group}, 2021.

\bibitem[ORT14]{olmosIndexSymmetryCompact2014}
C.~Olmos, S.~Reggiani, and H.~Tamaru, \emph{The index of symmetry of compact naturally reductive spaces}, Math. Z. \textbf{277} (2014), no.~3-4, 611--628.

\bibitem[Reg19]{reggianiDistributionSymmetryNaturally2019}
S.~Reggiani, \emph{The distribution of symmetry of a naturally reductive nilpotent {{Lie}} group}, Geom. Dedicata \textbf{200} (2019), no.~1, 61--65.

\bibitem[Reg21]{reggianiManifoldsAdmittingMetric2021}
\bysame, \emph{Manifolds admitting a metric with co-index of symmetry 4}, Manuscripta Math. \textbf{164} (2021), no.~3-4, 543--553.

\bibitem[RV20]{reggianiModuliSpaceLeftinvariant2020}
S.~Reggiani and F.~Vittone, \emph{The moduli space of left-invariant metrics of a class of six-dimensional nilpotent {{Lie}} groups}, 2020.

\bibitem[Sal01]{salamonComplexStructuresNilpotent2001}
S.~M. Salamon, \emph{Complex structures on nilpotent {{Lie}} algebras}, J. Pure Appl. Algebra \textbf{157} (2001), no.~2-3, 311--333.

\bibitem[T{\^o}g61]{togoDerivationAlgebrasLie1961}
S.~T{\^o}g{\^o}, \emph{On the derivation algebras of {{Lie}} algebras}, Can. J. Math. \textbf{13} (1961), 201--216.

\bibitem[Wil03]{willRankoneEinsteinSolvmanifolds2003}
C.~Will, \emph{Rank-one {{Einstein}} solvmanifolds of dimension 7}, Diff. Geom. Appl. \textbf{19} (2003), no.~3, 307--318.

\bibitem[Wil11]{willSpaceSolvsolitonsLow2011}
\bysame, \emph{The space of solvsolitons in low dimensions}, Ann. Glob. Anal. Geom. \textbf{40} (2011), no.~3, 291--309.

\bibitem[Wol63]{wolfLocallySymmetricSpaces1963}
J.~A. Wolf, \emph{On locally symmetric spaces of non-negative curvature and certain other locally homogeneous spaces}, Comment. Math. Helv. \textbf{37} (1963), 266--295.

\end{thebibliography}

\end{document}